\newtheorem{thm}{Theorem}[section]
\newtheorem{prop}[thm]{Proposition}
\newtheorem{lem}[thm]{Lemma}
\newtheorem{cor}[thm]{Corollary}
\def\dimLow{\underline{\rm dim}}
\def\Zp{\mathbb{Z}_{p}}
\def\Qp{\mathbb{Q}_{p}}
\def\Q{\mathbb{Q}}
\def\Z{\mathbb{Z}}
\def\ZE{\mathcal{Z}_{\widehat{\delta_E}}}
\def\R{\mathbb{R}}
\def\M{\mathcal{M}}
\begin{document}
	
\title[On $p$-adic spectral measures]{On $p$-adic spectral measures}
%\date{\today}

\author%[authorlabel1]
{Ruxi Shi}
\address%[authorlabel1]
{Institute of Mathematics, Polish Academy of Sciences, ul. \'Sniadeckich 8, 00-656 Warszawa, Poland}
\email{rshi@impan.pl}

%\thanks{A. H. FAN was supported by NSF of China (Grant No. 11471132) and self-determined research funds of CCNU (Grant No. CCNU14Z01002); S. L. FAN was supported by NSF of China (Grant Nos. 11401236 and 11231009)}

%\tableofcontents

\begin{abstract}
A Borel probability measure $\mu$ on a locally compact group is called a spectral measure if there exists a subset of continuous group characters which forms
an orthogonal basis of the Hilbert space $L^2(\mu)$. In this paper, we characterize all spectral measures in the field $\Qp$ of $p$-adic numbers. 
\end{abstract}
%\subjclass[2010]{Primary 43A99; Secondary 05B45, 26E30}
%\keywords{$p$-adic field, spectral set, tiling, Fuglede's conjecture}

\maketitle

%{\bf R\'esum\'e} : Nous d\'emontrons la conjecture de Fuglede dans $\Q_p$, qui affirme qu'un ensemble bor\'elien dans $\Qp$ de mesure de Haar positive et finie  est un ensemble spectral si et seulement s'il pave $\Qp$ par translation.

%\tableofcontents
\section{Introduction}

Let $G$ be a locally compact abelian group and $\widehat{G}$  its dual group. We say that a Borel measure $\mu$  on $G$  is a {\em spectral measure} if there exists a set $\Lambda \subset \widehat{G}$ which is
an orthonormal basis of the Hilbert space $L^2(\mu)$. Such a set $\Lambda$ is called a {\em spectrum} of $\mu$ and the pair $(\mu,\Lambda)$ is called a {\em spectral pair}.

\medskip

The study of spectral measures was pioneered by Fuglede \cite{F}. He formulated so-called spectral set conjecture which asserted that 

\medskip

{\em 
The measure $1_\Omega dx$ is a spectral measure in $G$ if and only if the set $\Omega$  is a tile of $G$ by translation.}

\medskip

\noindent 
Here, the Haar measure on $G$ is denoted by $dx$, and the set $\Omega$ is called a \textit{spectral set} if the Haar measure restricted on $\Omega$, which is denoted by $1_\Omega dx$, is a spectral measure.
Although the conjecture was disproved eventually for $\R^d$ with $d\ge 3$ \cite{TT, M, KM, KM2, FG, FMM06}, it remains still widely open for general locally compact abelian groups, especially for abelian groups in lower ``dimension". We only know that Fuglede's spectral set conjecture holds on some specific groups, for example, $\mathbb{Z}_{p^n}$ \cite{l2}, $\mathbb{Z}_p \times \mathbb{Z}_p$ \cite{imp}, $\mathbb{Z}_{p^nq}$ with $n\ge 1$ \cite{mk}, $p$-adic field $\mathbb{Q}_p$ \cite{FFS, FFLS}, $\Z_{pqr}$ with $p,q,r$ different primes  \cite{Shi1}, very recently $\mathbb{Z}_{p^nq^2}$ with $n\ge 1$ \cite{kmsv} and $\Z_{p^2}\times \Z_{p}$ \cite{Shi2}.
  
\medskip

Jorgensen and Pedersen \cite{jp} discovered that the standard middle-fourth Cantor measure is a spectral measure, which is the first spectral measure that is non-atomic and singular to the Haar measure (on $\R$) ever discovered. In the same paper, they also showed that the middle-third Cantor measure is not a spectral measure. After them, it becomes an active research area on determining self-similar/self-affine/Cantor-Moran spectral measures (see for example \cite{Shi4, Aks}).

\medskip

Throughout the paper, we consider the case $G=\Qp$ the $p$-adic field. In what follows, it is always with respect to Haar measure $dx$ whenever we say that a Borel measure in $\Qp$ is singular (respectively absolutely continuous). 
%In this paper, we are interested in spectral measures in the $p$-adic field $\Qp$. 
Fan et al. \cite{FFLS} gave a geometrical description of spectral sets in $\Qp$, where they proved that a Borel set is a spectral set in $\Qp$ if and only if it is a tile by translation, and if and only if its centers form a $p$-homogeneous tree. Moreover, they constructed examples of singular spectral measures which are the weak limits of  absolutely continuous measures  whose density functions are certain indicator functions of spectral sets in $\Qp$.  We recall their construction as follows:

\medskip

Let $\mathbb{I}, \mathbb{J}$ be two disjoint  infinite subsets of $\mathbb{N}$ such that
$$\mathbb{I}\bigsqcup \mathbb{J}=\mathbb{N}.$$
For any non-negative integer $\gamma$, let $\mathbb{I}_{\gamma}= \mathbb{I} \cap  \{0,1, \cdots \gamma-1 \}$ and $\mathbb{J}_{\gamma}=\mathbb{J} \cap  \{0,1, \cdots \gamma-1 \}$. Let $C_{\mathbb{I}_\gamma,\mathbb{J}_{\gamma}}\subset \Z/p^{\gamma}\Z$ be $p$-homogeneous  subsets having $(0, \gamma, \mathbb{I}_\gamma, \mathbb{J}_\gamma )$-tree structure (roughly speaking, it means that the digit set of $p$-expansions of elements in $C_{\mathbb{I}_\gamma,\mathbb{J}_{\gamma}}$ is homogenous according to the sets $\mathbb{I}_\gamma$ and $\mathbb{J}_{\gamma}$, see the precise definitions in Section \ref{sec:tree structure}).
Considering $C_{\mathbb{I}_\gamma, \mathbb{J}_{\gamma}}$ as a subset of $\Z_p$, let
$$
\Omega_{\gamma}=\bigsqcup_{c \in C_{I_\gamma, J_\gamma}} \left(c + p^\gamma \mathbb{Z}_p\right), \quad \gamma=0,1,2\cdots 
$$
be a nested sequence of compact open sets, i.e.  $\Omega_{0}\supset \Omega_{1}\supset \Omega_{2}\supset \cdots$.
It is obvious that the measures ${\frac{1}{|\Omega_{\gamma}|}1_{\Omega_{\gamma}}dx}$ (and also $\frac{1}{\sharp \mathbb{I}_{\gamma}}\sum_{c \in C_{I_\gamma, J_\gamma}} \delta_c$) weakly converge to a singular measure, namely $\nu_{\mathbb{I},\mathbb{J}}$, as $\gamma \to \infty$. The measure $\nu_{\mathbb{I},\mathbb{J}}$ is supported on a $p$-homogeneous, Cantor-like set of measure $0$, and the measure of an open ball with respect to  $\nu_{\mathbb{I},\mathbb{J}}$  is just the ``proportion" of this support inside the ball.

\medskip

Fan et al. \cite{FFLS} proved that the singular measure $\nu_{\mathbb{I},\mathbb{J}}$ under construction is a spectral measure. We remark that in the above construction if we set $\mathbb{I}$ to be finite (respectively $\mathbb{J}$ finite) then the associated measure $\nu_{\mathbb{I},\mathbb{J}}$ is discrete (respectively absolutely continuous). 

\medskip

%In this paper, we consider the converse direction of the above question, that is to say, what the spectral measure in $\Qp$ looks like. We would like to classify all spectral measures in $\Qp$. 
It is easy to check that a spectral measure under translation or multiplier is still spectral. Thus the translation or multiplier of $\nu_{\mathbb{I},\mathbb{J}}$ is also a spectral measure. Since the measure  $\nu_{\mathbb{I},\mathbb{J}}$ is constructed in a simple and intuitive way, it seems that the spectral measures in $\Qp$ should involve measures with more sophisticated structures. But we disestablish such semblance by showing the rigidity of the spectral measures. More precisely, we prove that  the measures $\nu_{\mathbb{I},\mathbb{J}}$ (with $\mathbb{I}$ and $\mathbb{J}$ a partition of $\mathbb{N}$) which are constructed above are all spectral measures in $\Qp$ up to translation or  multiplier. Now we state our main theorem.

\begin{thm}\label{thm:main}
	A probability measure $\mu$ in $\Qp$ is a spectral measure if and only if there exist two sets $\mathbb{I}$ and $ \mathbb{J}$ that form a partition of $\mathbb{N}$ such that the measure $\mu$ is of the form $\nu_{\mathbb{I},\mathbb{J}}$ up to translation and multiplier.
\end{thm}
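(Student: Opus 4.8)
The ``if'' direction is known: by \cite{FFLS} each $\nu_{\mathbb{I},\mathbb{J}}$ is spectral, and, as recalled in the introduction, the class of spectral measures in $\Qp$ is preserved by translations $x\mapsto x+a$ and by multipliers $x\mapsto bx$, $b\in\Qp^{\times}$. So I treat the converse. Let $(\mu,\Lambda)$ be a spectral pair; after translating $\Lambda$ we may assume $0\in\Lambda$, so that $\widehat\mu(\lambda)=0$ for all $\lambda\in\Lambda\setminus\{0\}$, more generally $\Lambda-\Lambda\subset\{\widehat\mu=0\}\cup\{0\}$, and, because every $\|e_\xi\|_{L^2(\mu)}=1$, the Parseval identity $\sum_{\lambda\in\Lambda}|\widehat\mu(\xi+\lambda)|^{2}=1$ holds for all $\xi\in\Qp$. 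First I would show that $\mathrm{supp}(\mu)$ is compact --- using that $\widehat\mu$ is uniformly continuous with $\widehat\mu(0)=1$, so $(\Lambda-\Lambda)\cap p^{k_0}\Z_p=\{0\}$ for a suitable $k_0$, together with the fact that the compact open subgroups of $\Qp$ are exactly the subgroups $p^{k}\Z_p$, $k\in\Z$ --- after which, following a translation and a multiplier by a power of $p$, we may take $\mathrm{supp}(\mu)\subset\Z_p$. Since $\mu$ is then carried by $\Z_p$, $\widehat\mu$ depends only on $\xi\bmod\Z_p$, so I regard $\widehat\mu$ and $\Lambda$ inside the Pr\"ufer group $\Qp/\Z_p$, equipped with the increasing chain of finite subgroups $P_n:=p^{-n}\Z_p/\Z_p\cong\Z/p^{n}\Z$.

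Put $\Lambda_n:=\Lambda\cap P_n$ (an increasing chain of finite sets with union $\Lambda$) and let $\mu_n$ be the image of $\mu$ in $\Z/p^{n}\Z$. The characters $e_\lambda$ with $\lambda\in\Lambda_n$ are $\Z/p^{n}\Z$-measurable, hence orthonormal in $L^{2}(\mu_n)$, so $|\Lambda_n|\le|\mathrm{supp}(\mu_n)|$; and summing the Parseval identity over $\xi\in P_n$ gives
\[
p^{n}\;=\;|\Lambda_n|\,p^{n}\sum_{x}\mu_n(x)^{2}\;+\;\sum_{\lambda\in\Lambda\setminus\Lambda_n}\ \sum_{\eta\in P_n+\lambda}|\widehat\mu(\eta)|^{2},
\]
using that $\{\xi+\lambda:\xi\in P_n\}$ is $P_n$ when $\lambda\in\Lambda_n$ and a nontrivial coset of $P_n$ otherwise, and that $\sum_{\eta\in P_n}|\widehat\mu(\eta)|^{2}=\sum_{\eta\in P_n}|\widehat{\mu_n}(\eta)|^{2}=p^{n}\sum_{x}\mu_n(x)^2$ by finite Parseval. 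The crucial step --- which I expect to be the main obstacle --- is the \textbf{Key Lemma}: \emph{$\widehat\mu$ vanishes on $P_n+\lambda$ for every $\lambda\in\Lambda\setminus P_n$}, i.e.\ $\widehat\mu(\xi+\lambda)=0$ whenever $\xi\in P_n$ and $\lambda\in\Lambda\setminus P_n$. Granting it, the second sum above vanishes, and the Parseval identity for $\mu$ collapses to $\sum_{\lambda\in\Lambda_n}|\widehat{\mu_n}(\xi+\lambda)|^{2}=1$ for $\xi\in P_n$; together with the orthonormality of $\{e_\lambda:\lambda\in\Lambda_n\}$ in $L^2(\mu_n)$ this shows $(\mu_n,\Lambda_n)$ is a spectral pair in $\Z/p^{n}\Z$, whereupon $\sum_x\mu_n(x)^{2}=1/|\Lambda_n|=1/|\mathrm{supp}(\mu_n)|$ forces, by Cauchy--Schwarz, $\mu_n$ to be uniform on its support $S_n\subset\Z/p^{n}\Z$. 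To establish the Key Lemma one has to convert the pointwise inclusion $\Lambda-\Lambda\subset\{\widehat\mu=0\}\cup\{0\}$ together with the completeness of $\{e_\lambda:\lambda\in\Lambda\}$ in $L^{2}(\mu)$ --- the latter best approached through conditional expectations onto the algebra of $\Z/p^{n}\Z$-cylinders --- into the asserted vanishing; the leverage available is the feature distinguishing $\Qp$ among the groups mentioned in the introduction, namely that its compact open subgroups $p^{k}\Z_p$ are totally ordered by inclusion, so that there is a single relevant ``scale'' at each level. This is the heart of the argument.

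Granting the Key Lemma (so every $\mu_n$ is uniform on $S_n$, and $S_n$ is a spectral set in $\Z/p^{n}\Z$), Fuglede's conjecture on $\Z/p^{n}\Z$ --- which holds, see \cite{l2} --- shows $S_n$ is a tile of $\Z/p^{n}\Z$, and every tile of $\Z/p^{n}\Z$ has $(0,n,\mathbb{I}_n,\mathbb{J}_n)$-tree structure for some partition $\{0,\dots,n-1\}=\mathbb{I}_n\sqcup\mathbb{J}_n$ (the classical description of tilings of $\Z/p^{n}\Z$). The ``branching-level'' set $\mathbb{I}_n$ is determined by $S_n$, and since $S_n$ is the reduction mod $p^{n}$ of $S_{n+1}$ one gets $\mathbb{I}_n=\mathbb{I}_{n+1}\cap\{0,\dots,n-1\}$; thus the $\mathbb{I}_n$ increase to a set $\mathbb{I}\subset\mathbb{N}$, and putting $\mathbb{J}:=\mathbb{N}\setminus\mathbb{I}$ we have $\mathbb{I}_n=\mathbb{I}\cap\{0,\dots,n-1\}$ for all $n$.

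Finally, by construction $\mu(c+p^{n}\Z_p)=\mu_n(c\bmod p^{n})$, so $\mu$ is the weak limit of the uniform measures on the nested sets $S_n+p^{n}\Z_p$ --- which is precisely the measure $\nu_{\mathbb{I},\mathbb{J}}$ attached to the tower $(S_n)$ by the construction recalled in the Introduction (take $C_{\mathbb{I}_n,\mathbb{J}_n}=S_n$). Undoing the normalising translation and multiplier, $\mu$ equals $\nu_{\mathbb{I},\mathbb{J}}$ up to translation and multiplier, which completes the proof; $\nu_{\mathbb{I},\mathbb{J}}$ is discrete, absolutely continuous, or singular continuous according as $\mathbb{I}$ is finite, $\mathbb{J}$ is finite, or both are infinite. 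Apart from the Key Lemma, the only other point that calls for genuine care is the compact-support reduction; the remainder is routine.
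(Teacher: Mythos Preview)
Your strategy is genuinely different from the paper's. The paper never projects to $\Z/p^n\Z$ nor invokes the classification of spectral sets there; instead it takes the Fourier transform of the tiling identity, obtains $\widehat{|\widehat\mu|^2}\cdot\widehat{\delta_\Lambda}=\delta_0$, and uses a Colombeau-algebra argument (Proposition~\ref{prop:union of zeros}) to show that every nonzero sphere lies in $\mathcal Z_{\widehat{|\widehat\mu|^2}}\cup\mathcal Z_{\widehat{\delta_\Lambda}}$. From this it builds, for each scale $k$, a large $p$-homogeneous set inside $\Lambda\cap B(0,p^{k+1})$ and a small $p$-homogeneous set containing the support of $\mu_k$, and a Hadamard-matrix rank computation (Proposition~\ref{prop:main theorem}) forces these to match, yielding uniformity of $\mu_k$. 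Your route---reduce to $(\mu_n,\Lambda_n)$ spectral in $\Z/p^n\Z$, then cite \cite{l2,FFS}---would be cleaner if it worked, but both steps you flag as ``routine'' or ``sketchy'' are in fact the substance of the theorem.

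\medskip

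\textbf{Compact support.} From $(\Lambda-\Lambda)\cap p^{k_0}\Z_p=\{0\}$ you get only that $\Lambda$ is uniformly discrete (this is the paper's Lemma~\ref{lem:Lambda uniformly discrete}); it does not imply $\mathrm{supp}(\mu)$ is compact. In the paper, compactness (Proposition~\ref{prop:compact support}) already requires knowing that $S(0,p^{-n})\subset\mathcal Z_{\widehat{|\widehat\mu|^2}}$ for all $n$ below a threshold, which in turn rests on Proposition~\ref{prop:union of zeros}. Nothing in your outline supplies this.

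\medskip

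\textbf{The Key Lemma.} Your Key Lemma says: if $\lambda\in\Lambda$ has $|\lambda|_p=p^m>p^n$ then $\widehat\mu$ vanishes on the whole coset $\lambda+P_n$. Unwinding, this is the assertion that the pointwise zero set of $\widehat\mu$ is a union of spheres $S(0,p^m)$---true \emph{a posteriori} for the measures $\nu_{\mathbb I,\mathbb J}$, but for an arbitrary compactly supported probability measure on $\Z_p$ it is false, so it must be extracted from spectrality. The hints you give (conditional expectation onto $\Z/p^n\Z$-cylinders, total order of compact open subgroups) do not yield it: applying $E_n$ to the Parseval expansion of $e_\xi$ produces terms $E_n[e_\lambda]$ that cannot be evaluated without already knowing $\mu$ is uniform on cylinders, which is what you are trying to prove. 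The paper sidesteps the pointwise zero set of $\widehat\mu$ entirely: it works on the support side via $\widehat{|\widehat\mu|^2}=\mu*\mu_-$ (Lemma~\ref{lem:zeros in mu hat hat hat}), and the eventual uniformity of $\mu_k$ comes not from a vanishing lemma but from the linear-algebra step in Proposition~\ref{prop:main theorem}, where a $(\sharp C_k-1)\times\sharp\Omega_k$ submatrix of a Hadamard matrix is shown to have a one-dimensional kernel. That argument uses both the $p$-homogeneous lower bound on $\Lambda_k$ and the $p$-homogeneous upper bound on $\Omega_k$ simultaneously; neither half by itself gives the Key Lemma.

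\medskip

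In short, the proposal relocates the entire difficulty into the Key Lemma, and the suggested mechanism for proving it is circular. If you can find an independent proof of the Key Lemma (and of compact support) your argument would go through and would be a pleasant alternative; as written, the crucial step is missing.
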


%We remark that the proof of Theorem \ref{thm:main}  does not depend on the result in \cite{FFLS}. In fact, as a consequence of Theorem \ref{thm:main}, we give a new proof of Fuglede's conjecture on $\Qp$ \cite{FFLS}.

As a consequence of Theorem \ref{thm:main}, we could calculate the precise value of dimensions of spectral measures and their spectra, and establish certain equality between them. We remark that one could not expect that the equality holds for spectral measures in general locally compact abelian groups. We refer to \cite{Shi3} for general cases.
\begin{prop}\label{cor:dimension}
		Let $\mu$ be a spectral measure in $\Qp$ with spectrum $\Lambda$. Then we have
		$$
		\dim_H \mu=\underline{\dim}_e \mu~\text{and}~
		\dim_B \Lambda=\overline{\dim}_{\text{e}} \mu.
		$$
\end{prop}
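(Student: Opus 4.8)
The plan is to deduce Proposition~\ref{cor:dimension} from Theorem~\ref{thm:main} by an explicit computation on $\nu_{\mathbb{I},\mathbb{J}}$. First I would record that none of the four quantities in the statement is changed by a translation or a nonzero multiplier: a translation moves no ball and rescales no radius, while $x\mapsto tx$ (for $t\in\Qp\setminus\{0\}$) multiplies every radius by the fixed power $|t|_p$ of $p$, so it only shifts the scale index $\gamma$ by a bounded amount — which disappears after dividing by $\gamma$ — in the lower local dimensions of $\mu$, in the Shannon entropies $H_\gamma(\mu)$ of $\mu$ with respect to the partition $\mathcal{P}_\gamma$ of $\Qp$ into balls of radius $p^{-\gamma}$, and in the counting function $\gamma\mapsto\sharp(\Lambda\cap\{|\xi|_p\le p^\gamma\})$ of a spectrum. (If $(\nu_{\mathbb{I},\mathbb{J}},\Lambda)$ is a spectral pair, then so is its image under any affine substitution $x\mapsto tx+a$, with spectrum obtained from $\Lambda$ by rescaling by a power of $p$ and at worst a modulation, neither of which affects $\dim_B$.) Hence, by Theorem~\ref{thm:main}, it suffices to prove the two identities for $\mu=\nu:=\nu_{\mathbb{I},\mathbb{J}}$, where $\mathbb{N}=\mathbb{I}\sqcup\mathbb{J}$.

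Next I would compute the left-hand sides. By the construction of $\nu$, for every $\gamma$ exactly $\sharp C_{\mathbb{I}_\gamma,\mathbb{J}_\gamma}=p^{\sharp\mathbb{I}_\gamma}$ of the $p^\gamma$ balls of radius $p^{-\gamma}$ inside $\Z_p$ meet the support $X$ of $\nu$, and $\nu$ gives mass $p^{-\sharp\mathbb{I}_\gamma}$ to each of them. Thus, writing $B_\gamma(x)$ for the radius-$p^{-\gamma}$ ball about $x$,
\[
\frac{\log\nu(B_\gamma(x))}{\log p^{-\gamma}}=\frac{\sharp\mathbb{I}_\gamma}{\gamma}\qquad\text{for all }x\in X\text{ and all }\gamma,
\]
so the lower local dimension of $\nu$ is the constant $\liminf_{\gamma\to\infty}\sharp\mathbb{I}_\gamma/\gamma$ on $X$; by the standard relation between the Hausdorff dimension of a measure and its lower local dimensions this yields $\dim_H\mu=\liminf_{\gamma\to\infty}\sharp\mathbb{I}_\gamma/\gamma$. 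On the other hand the push-forward of $\nu$ to the finite partition $\mathcal{P}_\gamma$ is the uniform probability on its $p^{\sharp\mathbb{I}_\gamma}$ nonempty cells, whence $H_\gamma(\nu)=\sharp\mathbb{I}_\gamma\log p$; dividing by $\gamma\log p$ and passing to $\liminf$ and $\limsup$ gives
\[
\underline{\dim}_e\mu=\liminf_{\gamma\to\infty}\frac{\sharp\mathbb{I}_\gamma}{\gamma},\qquad \overline{\dim}_e\mu=\limsup_{\gamma\to\infty}\frac{\sharp\mathbb{I}_\gamma}{\gamma}.
\]
Comparing the two displays proves $\dim_H\mu=\underline{\dim}_e\mu$.

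For the spectrum I would argue as follows. Since $\nu$ is supported in $\Z_p$, its Fourier transform depends only on the class of $\xi$ modulo $\Z_p$, and for $\xi\in p^{-\gamma}\Z_p$ the value $\widehat{\nu}(\xi)$ coincides with $\widehat{\bar\nu_\gamma}(\xi)$, where $\bar\nu_\gamma$ is the uniform probability on $C_{\mathbb{I}_\gamma,\mathbb{J}_\gamma}\subset\Z/p^\gamma\Z$ (the projection of $\nu$). Consequently $\Lambda\cap\{|\xi|_p\le p^\gamma\}$ is an orthogonal family in the $p^{\sharp\mathbb{I}_\gamma}$-dimensional space $L^2(\bar\nu_\gamma)$, which forces $\sharp(\Lambda\cap\{|\xi|_p\le p^\gamma\})\le p^{\sharp\mathbb{I}_\gamma}$; the matching lower bound I would extract from the rigid ($p$-homogeneous tree) structure of the spectra of $\nu_{\mathbb{I},\mathbb{J}}$ established in \cite{FFLS} and used in the proof of Theorem~\ref{thm:main}. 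Hence $\sharp(\Lambda\cap\{|\xi|_p\le p^\gamma\})=p^{\sharp\mathbb{I}_\gamma}$ for all large $\gamma$, so
\[
\dim_B\Lambda=\limsup_{\gamma\to\infty}\frac{\log\sharp(\Lambda\cap\{|\xi|_p\le p^\gamma\})}{\gamma\log p}=\limsup_{\gamma\to\infty}\frac{\sharp\mathbb{I}_\gamma}{\gamma}=\overline{\dim}_e\mu.
\]

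The one genuinely delicate point is this last step: Theorem~\ref{thm:main} by itself only pins down the measure, so one must extract from its proof (equivalently, from \cite{FFLS}) enough control on \emph{every} spectrum of $\nu_{\mathbb{I},\mathbb{J}}$ — specifically on its growth in the balls $\{|\xi|_p\le p^\gamma\}$ — and check that $\dim_B$ is computed by the displayed counting limit independently of the chosen spectrum. The support-and-entropy side, by contrast, is routine bookkeeping once the ball masses $p^{-\sharp\mathbb{I}_\gamma}$ are in hand; the only place deserving a word is the convention for $\dim_H$ of a measure, where the $\liminf$ (rather than the $\limsup$) appears precisely because the \emph{lower} local dimension is constant on $\operatorname{supp}\nu$, whereas the upper local dimension — and with it the box dimension of $X$ — is governed by the $\limsup$.
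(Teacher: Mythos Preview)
Your proposal is correct and follows essentially the same route as the paper: reduce to $\nu_{\mathbb{I},\mathbb{J}}$ via Theorem~\ref{thm:main}, read off $\mu(B(x,p^{-\gamma}))=p^{-\sharp\mathbb{I}_\gamma}$ for $x$ in the support to get $\dim_H\mu=\underline{\dim}_e\mu=\liminf_\gamma\sharp\mathbb{I}_\gamma/\gamma$, and use the structural description of the spectrum obtained inside the proof of Theorem~\ref{thm:main} to get $\dim_B\Lambda=\limsup_\gamma\sharp\mathbb{I}_\gamma/\gamma=\overline{\dim}_e\mu$. The only cosmetic difference is that the paper invokes the identity $\Lambda=\bigcup_k C_k$ (equation~\eqref{eq:Lambda=Ck}, a consequence of Proposition~\ref{prop:main theorem}(4)) to obtain $\sharp(\Lambda\cap B(0,p^k))=p^{\sharp\mathbb{I}_{\le k}}$ in one stroke, whereas you split this into an upper bound via the projection to $L^2(\bar\nu_\gamma)$ and a lower bound pulled from the same structural result; your caution about needing control on \emph{every} spectrum is exactly what Proposition~\ref{prop:main theorem} supplies.
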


We end up this section with presenting the structure of the paper and the rough idea of the proof of Theorem \ref{thm:main}.
\subsection*{Very rough idea of proof}
We firstly see that the functional equation 
\begin{equation}\label{eq:idea 1}
|\widehat{\mu}|^2 * \delta_{\Lambda} = 1.
\end{equation}
is a criteria for spectral probability measure $\mu$ of $\Qp$ with spectrum $\Lambda$. By taking Fourier transformation of \eqref{eq:idea 1}, we have that
\begin{equation}\label{eq:idea 2}
\widehat{|\widehat{\mu}|^2} \cdot \widehat{\delta}_{\Lambda} = \delta_0.
\end{equation}
Then we show that the union of ``zeros" of $\widehat{|\widehat{\mu}|^2}$ and $\widehat{\delta}_{\Lambda}$ is equal to the set of ``zeros" of $ \delta_0$ (see precise definition of ``zeros" of distributions in Section \ref{subset:zeros}). In fact, using Colombeau algebra of generalized functions in $\Qp$, we prove such property for general functional equation $f\cdot h=g$ for $f,g,h$ distributions under a mild condition (see Proposition \ref{prop:union of zeros}). Since the distribution $\delta_0$ has abundant ``zeros", we finally discover the structure of $\mu$ and $\Lambda$ with the help of ``zeros" of $\widehat{|\widehat{\mu}|^2}$ and $\widehat{\delta}_{\Lambda}$. Actually, we use $p^n$-cycles (see Section \ref{Zmodule}) and $p$-homogeneous set (see Section \ref{sec:tree structure}) as tools to investigate the local structures of $\mu$ and $\Lambda$. More precisely, we will firstly show that $\Lambda$ in a very small ball contains a relatively large $p$-homogeneous set and the support of $\mu$ in a very small ball is contained in a  relatively small $p$-homogeneous set.  Then we carefully enlarger the ball which we look at and show that these two $p$-homogeneous sets are of the exactly same size (here, the rigidity of spectral measure is applied). Finally, we continue this process until that we get the structures of $\mu$ and $\Lambda$ on the whole $\Qp$.
 We remark that the primeness of $p$ plays an important role not only in studying the functional equation \eqref{eq:idea 2}, but also investigating the structures of $\mu$ and $\Lambda$.

\subsection*{Structure of the paper} In Section \ref{sec:disctribution}, we recall the theory of Bruhat-Schwartz distributions and Colombeau algebra of generalized functions in $\Q_p$. In Section \ref{sec:dimension}, we recall basic notions and properties of dimensions of sets and measures. In Section \ref{sec:tree structure}, we introduce the notion of $p$-homogenous sets in $\Qp$. In Section \ref{sec:zeros}, we prove the proposition about the zeros of distributions and that of their product, which is crucial to the proof of Theorem \ref{thm:main}. In Section \ref{Zmodule}, we study the vanishing sum of continuous characters, where the $p$-homogenous sets play an important role. In Section \ref{sec:density}, we discuss the density of uniformly discrete set whenever the set satisfies a simply functional equation. In Section \ref{sec: spectral measures in Qp}, we investigate the properties of spectral measures and their spectra, and then prove Theorem \ref{thm:main}. In Section \ref{sec:cor dimension}, we prove Proposition \ref{cor:dimension}. In Section \ref{sec:higher dimension}, we discuss several properties of spectral measures in higher dimensional $p$-adic spaces.

\setcounter{equation}{0}

\section{Distribution and generalized function on $\Qp$}\label{sec:disctribution}

\subsection{The field of $p$-adic number $\Qp$}\label{p-adicfield}

We begin with a quick recall of the field of $p$-adic numbers. 
Consider the field $\mathbb{Q}$ of rational numbers and a prime $p\ge 2$.
Any nonzero number $r\in \mathbb{Q}$ can be written as
$r =p^v \frac{a}{b}$ where $v, a, b\in \mathbb{Z}$ and $(p, a)=1$ and $(p, b)=1$
where $(x, y)$ denotes the greatest common divisor of the two integers $x$ and $y$. 
%By the unique factorization in $\mathbb{Z}$,
%the number $v$ depends only on $r$.
 We define the non-Archimedean absolute value
$|r|_p = p^{-v_p(r)}$ for $r\not=0$ and $|0|_p=0$. That means\\
\indent (i)  \ \ $|r|_p\ge 0$ with equality only when $r=0$; \\
\indent (ii) \ $|r s|_p=|r|_p |s|_p$;\\
\indent (iii) $|r+s|_p\le \max\{ |r|_p, |s|_p\}$.\\
The field $\mathbb{Q}_p$ of $p$-adic numbers is defined as the completion of $\mathbb{Q}$ under
$|\cdot|_p$. In other words, a typical element $x$ of $\mathbb{Q}_p$ is of the form
\begin{equation}\label{HenselExp}
x= \sum_{n= v}^\infty a_n p^{n} \qquad (v\in \mathbb{Z}, a_n \in \{0,1,\cdots, p-1\} \text{ and } a_v\neq 0). 
\end{equation}
Here, $v_p(x):=v$ is called the $p$-{\em valuation} of $x$. The ring $\mathbb{Z}_p$ of $p$-adic integers is the set of $p$-adic numbers with absolute value smaller than or equal to $1$. 

 A non-trivial additive character on $\mathbb{Q}_p$ is defined by the formula
$$
\chi(x) = e^{2\pi i \{x\}}
$$
where $\{x\}= \sum_{n=v_p(x)}^{-1} a_n p^n$ is the fractional part of $x$ in (\ref{HenselExp}). From this character we can get all characters $\chi_\xi$ of $\mathbb{Q}_p$, by defining 
$\chi_\xi(x) =\chi(\xi x)$. It is not hard to see that
\begin{align}\label{one-in-unit-ball}
\chi(x)=e^{2\pi i k/p^n}, \quad  \text{  if }  x \in \frac{k}{p^n}+\mathbb{Z}_p  \ \  (k, n \in \Z), %\quad \text{ for all } x \ \text{with } |x|_p\leq 1,
\end{align}
and
\begin{align}\label{integral-chi}
\int_{p^{-n}\mathbb{Z}_p} \chi(x)dx=0 \ \text{ for all } n\geq 1.
\end{align}
In fact, the map $\xi \mapsto \chi_{\xi}$ from $\Q_p$ to $\widehat{\Q}_p$ is an isomorphism. We thus write $\widehat{\Q}_p\simeq \Qp$ and identify a point $\xi \in \Q_p$ with the point $\chi_\xi \in \widehat{\Q}_p$.  
For more information on $\mathbb{Q}_p$ and $\widehat{\Q}_p$, the reader is referred to the book \cite{Vvz}.

The following notation will be used for convenience in the whole paper.

\medskip
%\noindent {\bf Notation}:
\begin{tabular}{|c|c|}
	\hline $\mathbb{Z}_p^\times $ & $\mathbb{Z}_p\setminus p\mathbb{Z}_p=\{x\in \mathbb{Q}_p: |x|_p=1\}$,
	the group of units of $\mathbb{Z}_p$\\
	\hline $B(0, p^{n})$ & $p^{-n} \mathbb{Z}_p$,  the (closed) ball centered at $0$ of radius $p^n$\\
	\hline  $B(x, p^{n})$ &$x + B(0, p^{n})$\\
	\hline $ S(x, p^{n})$ & $B(x, p^{n})\setminus B(x, p^{n-1})$,  the sphere centered at $0$ of radius $p^n$\\
	\hline $ \mathbb{L} $ & $\{\{x\}: x\in \Q_p\}$, a complete set of representatives \\
	& of the cosets of the additive subgroup $\mathbb{Z}_p$\\
	\hline $ \mathbb{L}_n$ & $ p^{-n} \mathbb{L}$\\
	\hline
\end{tabular}

\medskip

Finally, when considering the geometrical structure of $\Qp$ and its subsets, we often keep in mind two models of $\Qp$: tree model and ball model. 
See Figure \ref{Fig:ball model} and Figure \ref{Fig:tree model}.

\begin{figure}[h!]
	\centering
	\includegraphics[width=0.4\textwidth]{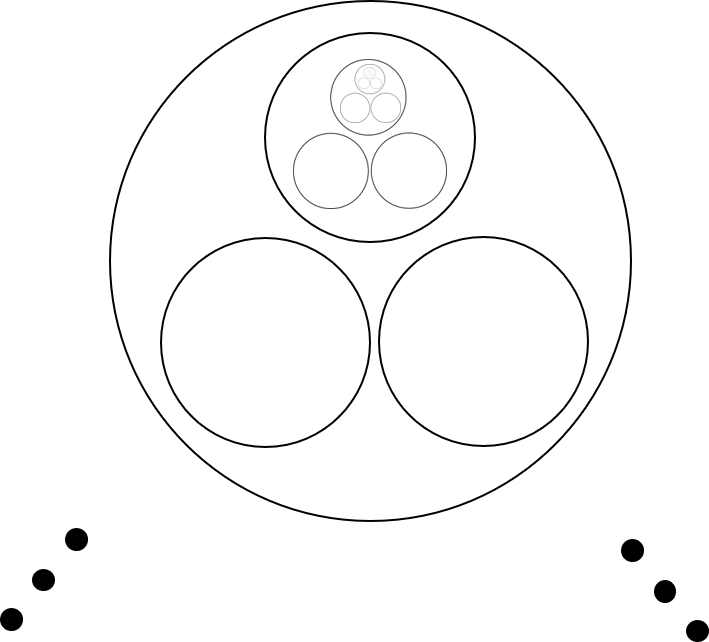}\\
	\caption{Ball model of $\Q_3$.} 
	\label{Fig:ball model}
\end{figure}

\begin{figure}[h!]
	\centering
	\includegraphics[width=0.8\textwidth]{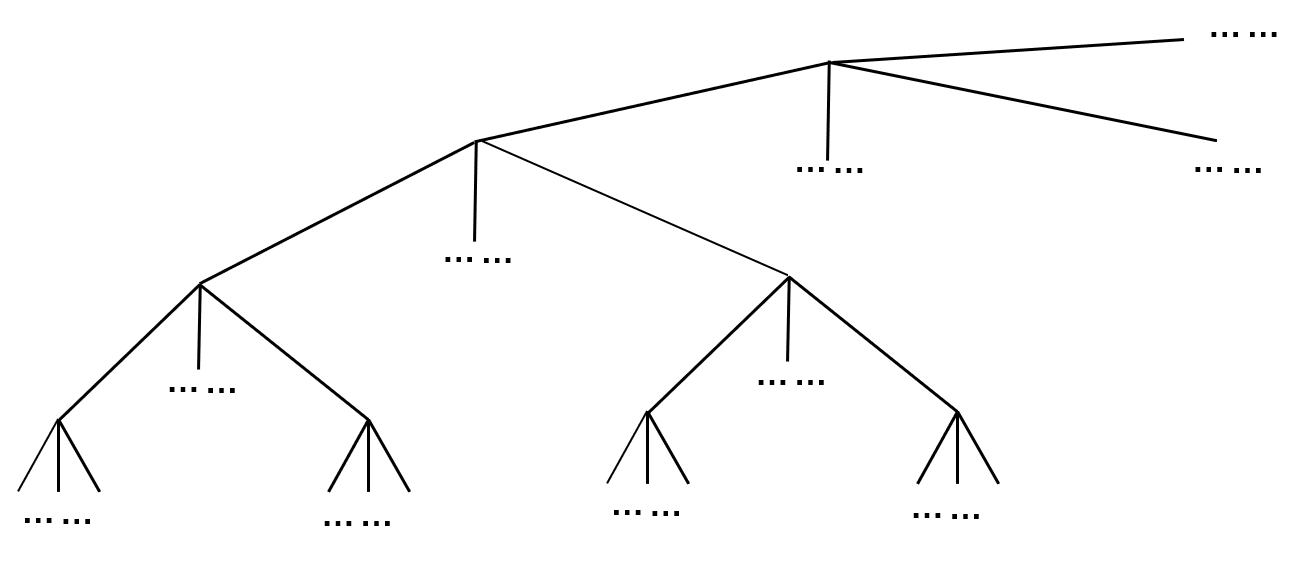}\\
	\caption{Tree model of $\Q_3$.} 
	\label{Fig:tree model}
\end{figure}

%\begin{figure}[htbp]
%	\centering
%	\begin{minipage}[t]{0.38\textwidth}
%		\centering
%		\includegraphics[width=3cm]{Figure1.png}
%		\caption{Ball model of $\Q_3$.}
%	\end{minipage}
%	\begin{minipage}[t]{0.58\textwidth}
%		\centering
%		\includegraphics[width=8cm]{Figure2.png}
%		\caption{Tree model of $\Q_3$.}
%	\end{minipage}
%\end{figure}

%\begin{itemize}

%$\mathfrak{U}_n:= 1 + \mathfrak{p}^n \mathfrak{D}$ ($n\ge 1$).
%These are subgroups of $\mathfrak{D}^\times$.

%\item $B(x, p^{n}): = x + B(0, p^{n})$. %We also use it to denote balls in $\Qp$.

%\item $ S(x, p^{n}): = B(x, p^{n})\setminus B(x, p^{n-1})$,  the sphere centered at $0$ of radius $p^n$.

%\item $ \mathbb{L} : = \{\{x\}: x\in \Q_p\}$, a complete set of representatives of the cosets of the additive subgroup $\mathbb{Z}_p$.

%\item $ \mathbb{L}_n : = p^{-n} \mathbb{L}$. %\marginpar{should be $p^{n} \mathbb{L}$?}
%\end{itemize}
%We also use it to denote balls in $\Qp$.
\medskip

\subsection{Fourier transformation of $L^1$ functions}
The Fourier transformation of  $f\in L^1(\Q_p)$ is defined to be 
$$\widehat{f}(\xi)=\int_{\Qp}f(x)\overline{\chi_\xi(x)} dx   \quad (\forall \xi\in \widehat{\Q}_p\simeq \Qp).$$
A complex function $f$ defined on $\Q_p$ is called \textit{uniformly locally constant} if  there exists $n\in \mathbb{Z}$ such that %for any $u\in \Q_p$ such that $|u|_p\le p^n$,  
\[f(x+u)=f(x) \quad \forall x\in \Q_p, \forall u \in B(0, p^n).\] 

%\begin{lem}
%	A locally constant function in $\Qp$ is uniformly locally constant.
%\end{lem}

The following proposition shows that for  an integrable function $f$, having compact support and being uniformly locally constant are dual properties for $f$ and its Fourier transform.
 
\begin{prop}[\cite{FFLS}, Proposition 2.2]\label{2case}
	Let $f\in L^1(\Q_p)$ be  a complex-value integrable function. \\
	\indent {\rm (1)} If $f$ has compact support, then $\widehat{f}$ is uniformly locally constant.\\
   \indent {\rm (2)} If $f$ is uniformly locally constant, then $\widehat{f}$ has compact support.
\end{prop}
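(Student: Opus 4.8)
The plan is to reduce both statements to the elementary fact that the additive character $\chi_\xi(x)=\chi(\xi x)$ is identically equal to $1$ on a ball $B(0,p^{m})$ \emph{exactly} when $|\xi|_p\le p^{-m}$; this is immediate from \eqref{one-in-unit-ball}, since $\chi(\xi x)=1$ for all $|x|_p\le p^m$ iff $\xi x\in\Z_p$ for all such $x$ iff $|\xi|_p p^{m}\le 1$. Once this dictionary between ``flat on a ball'' and ``small/large valuation'' is in place, each of (1) and (2) becomes a one-line manipulation of the defining integral, using only that the Haar measure on $\Qp$ is translation invariant and that $\chi_\xi(x+u)=\chi_\xi(x)\chi_\xi(u)$.

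For part (1), I would fix $N\in\Z$ with $\mathrm{supp}(f)\subset B(0,p^{N})$ and, for arbitrary $\xi\in\Qp$ and $u\in\Qp$, write
\[
\widehat f(\xi+u)=\int_{B(0,p^{N})} f(x)\,\overline{\chi_\xi(x)}\,\overline{\chi_u(x)}\,dx .
\]
If $|u|_p\le p^{-N}$, then $|ux|_p\le 1$ for every $x$ in the domain of integration, so $\chi_u(x)=1$ there and $\widehat f(\xi+u)=\widehat f(\xi)$. Hence $\widehat f$ is invariant under translation by $B(0,p^{-N})$, i.e.\ uniformly locally constant, which is (1).

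For part (2), suppose $f(x+u)=f(x)$ for all $x\in\Qp$ and all $u\in B(0,p^{n})$. Changing variables $x\mapsto x+u$ (legitimate as $f\in L^1$ and the Haar measure is translation invariant) and using multiplicativity of $\chi_\xi$ gives, for every such $u$,
\[
\widehat f(\xi)=\int_{\Qp} f(x+u)\,\overline{\chi_\xi(x+u)}\,dx=\overline{\chi_\xi(u)}\,\widehat f(\xi).
\]
Thus if $\widehat f(\xi)\ne 0$ we must have $\chi_\xi(u)=1$ for all $u\in B(0,p^{n})$, which by the criterion above forces $|\xi|_p\le p^{-n}$. Therefore $\mathrm{supp}(\widehat f)\subset B(0,p^{-n})$, a compact set, proving (2).

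\textbf{Main obstacle.} There is no essential difficulty here; the statement is a clean duality. The only points to handle with care are the bookkeeping of valuations and radii (ensuring the inequalities $|u|_p\le p^{-N}$ in (1) and $|\xi|_p\le p^{-n}$ in (2) point the correct way), the justification of the substitution inside the integral (valid since $f\in L^1(\Qp)$), and the remark that in (2) the support statement is meaningful pointwise because $\widehat f$ is continuous by the Riemann--Lebesgue lemma on $\Qp$.
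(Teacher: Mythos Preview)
Your proof is correct. The paper does not supply its own proof of this proposition---it merely cites \cite[Proposition 2.2]{FFLS}---and your argument is exactly the standard one: compact support of $f$ in $B(0,p^N)$ kills the factor $\overline{\chi_u(x)}$ whenever $|u|_p\le p^{-N}$, while uniform local constancy of $f$ combined with translation invariance of Haar measure yields $\widehat f(\xi)=\overline{\chi_\xi(u)}\,\widehat f(\xi)$, forcing $|\xi|_p\le p^{-n}$ on the support of $\widehat f$.
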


\subsection{Convolution and Fourier transformation of measures}
Let $\M(\Qp)$ be the set of all regular bounded measures on $\Qp$. It is clear that $\M(\Qp)$ is a normed linear space. Let $\nu, \mu\in \M(\Qp)$. Let $\nu\times \mu$ be their product measure on the product space $\Qp\times \Qp$. For a Borel set $E\subset \Qp$, let $E_+$ be the set
$$
\{(x,y)\in \Qp\times \Qp: x+y\in E \}.
$$
Then the convolution of $\nu$ and $\mu$, denoted by $\nu*\mu$, is defined by
$$
(\nu*\mu)(E)=(\nu\times \mu)(E_+),
$$
for any Borel set $E$ in $\Qp$.
It is well known that $\nu*\mu\in \M(\Qp)$ and that  if $\nu$ and $\mu$ are probability measures, then so is $\nu*\mu$.

The Fourier transformation of  $\mu\in \M(\Qp)$ is defined to be 
$$\widehat{\mu}(\xi)=\int_{\Qp}\overline{\chi_\xi(x)} d\mu(x)   \quad (\forall \xi\in \widehat{\Q}_p\simeq \Qp).$$

\subsection{Bruhat-Schwartz distributions in $\Q_p$}\label{subsec2.4}
Here we give a brief description of the theory of Bruhat-Schwartz distributions in $\Q_p$ which mainly follows the content in
\cite{Aks,t,Vvz}. 
  Let $\mathcal{E}$ denote the space of the uniformly locally constant functions.
The space $\mathcal{D}$ of {\em Bruhat-Schwartz test functions} is, by definition, constituted of uniformly locally constant functions
 with compact support. In fact, such a test function $f\in \mathcal{D}$ is a finite linear combination of indicator functions of the form $1_{B(x,p^k)}(\cdot)$, where $k\in \mathbb{Z}$ and $x\in \Q_p$. The largest of such numbers $k$, denoted by $\ell:= \ell(f)$, is  called the {\em parameter of constancy} of $f$. Since $f\in \mathcal{D}$ has compact support, the minimal number $\ell':=\ell'(f)$ such that the support of $f$ is contained in $B(0, p^{\ell'})$ exists and is called the {\em parameter of compactness} of $f$.

 Clearly, we have the relation $\mathcal{D}\subset \mathcal{E}$. The space $\mathcal{D}$ is equipped with the topology as follows: a sequence $\{\phi_n \}\subset \mathcal{D}$ is called a {\em null sequence} if there is a fixed pair of $l, l^{\prime}\in \mathbb{Z}$ such that 
 \begin{itemize}
 	\item each $\phi_n$ is constant on every ball of radius $p^l$;
 	\item each $\phi_n$ is supported by the ball $B(0,p^{l^{\prime}})$;
 	\item the sequence $\phi_n$ tends uniformly to zero.
 \end{itemize}

 A {\em Bruhat-Schwartz distribution} $f$ on $\Q_p$ is by definition a continuous linear functional on $\mathcal{D}$. The value of $f$ at $\phi \in \mathcal{D}$
 will  be denoted by $\langle f, \phi \rangle$.  
 Note that linear functionals on $\mathcal{D}$ are automatically continuous. This property allows us to easily construct distributions. Denote by
 $\mathcal{D}'$ the space of  Bruhat-Schwartz  distributions. The space $\mathcal{D}'$ is provided with the weak topology induced by  $\mathcal{D}$.
 
  A locally integrable function $f$ is considered as a distribution:  for any $\phi \in \mathcal{D}$,
$$
\langle f,\phi\rangle=\int_{\Q_p} f\phi dx.
$$
A subset $E$ of $\Q_p$ is said to be {\em uniformly discrete} if $E$ is countable and $\inf_{x,y \in E} |x-y|_p>0$. Remark that if $E$ is uniformly discrete, then  ${\rm Card}(E\cap K)<\infty$  for any compact subset $K$ of $\Q_p$ so that  
\begin{equation}\label{mu_E}
\delta_E = \sum_{\lambda\in E} \delta_\lambda
\end{equation}
defines a  discrete
Radon measure, which is also a distribution:
for any $\phi \in \mathcal{D}$,
$$
\langle \delta_E,\phi\rangle= \sum_{\lambda\in E} \phi(\lambda).
$$
Here for each $\phi$, the sum is finite  because $E$ is uniformly discrete and thus each ball contains at most a finite number of points in $E$.
Since the test functions in $\mathcal{D}$ are  uniformly locally constant  and have compact support,
the following proposition is a direct consequence of the fact (see for example \cite[Lemma 4]{Fan}) that
\begin{equation}\label{FB}
\widehat{1_{B(c, p^k)}}(\xi) = \chi(-c \xi) p^{k} 1_{B(0, p^{-k})}(\xi).
\end{equation}

\begin{prop}[\cite{t}, Chapter II 3]
 The Fourier transformation  $f \mapsto \widehat{f}$ is a  homeomorphism  from $\mathcal{D}$ onto $\mathcal{D}$.
  \end{prop}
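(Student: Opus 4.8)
The plan is to verify the three assertions contained in the statement: that $\mathcal{F}\colon f\mapsto\widehat f$ maps $\mathcal{D}$ into $\mathcal{D}$, that it is a bijection of $\mathcal{D}$ onto $\mathcal{D}$, and that both $\mathcal{F}$ and $\mathcal{F}^{-1}$ send null sequences to null sequences, that is, are continuous. Everything is driven by the explicit formula \eqref{FB} for $\widehat{1_{B(c,p^k)}}$ together with the $p$-adic Fourier inversion formula. It helps to filter $\mathcal{D}$ by the finite-dimensional subspaces $\mathcal{D}_{l,l'}$, for $l\le l'$, consisting of the functions that are constant on every ball of radius $p^l$ and supported in $B(0,p^{l'})$: such an $f$ is a combination $f=\sum_j c_j\,1_{B(x_j,p^l)}$ with $x_j$ running over the $p^{l'-l}$ cosets of $B(0,p^l)$ in $B(0,p^{l'})$, so $\dim\mathcal{D}_{l,l'}=p^{l'-l}$, $\mathcal{D}=\bigcup_{l\le l'}\mathcal{D}_{l,l'}$, and a null sequence is precisely a sequence lying in one fixed $\mathcal{D}_{l,l'}$ and tending to $0$ uniformly.

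First I would show $\mathcal{F}(\mathcal{D}_{l,l'})\subseteq\mathcal{D}_{-l',-l}$. Applying \eqref{FB} termwise to $f=\sum_j c_j 1_{B(x_j,p^l)}$ gives $\widehat f(\xi)=\big(\sum_j c_j\,\chi(-x_j\xi)\big)p^l\,1_{B(0,p^{-l})}(\xi)$, which is supported in $B(0,p^{-l})$; and since $|x_j|_p\le p^{l'}$, each $\chi(-x_j\xi)$ is unchanged when $\xi$ is shifted by an element of $B(0,p^{-l'})$ (then $x_ju\in\Zp$, on which $\chi$ is trivial by \eqref{one-in-unit-ball}), so $\widehat f$ is constant on balls of radius $p^{-l'}$. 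Hence $\widehat f\in\mathcal{D}_{-l',-l}\subseteq\mathcal{D}$; this also follows directly from Proposition \ref{2case}.

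Next, bijectivity. On $\mathcal{D}$ the inversion formula $\widehat{\widehat f}(x)=f(-x)$ holds: for a ball indicator it comes from applying \eqref{FB} twice and invoking the orthogonality relation \eqref{integral-chi}, and the general case follows by linearity. Writing $Rf(x):=f(-x)$, a (self-inverse) homeomorphism of $\mathcal{D}$, we get $\mathcal{F}^2=R$, so $\mathcal{F}$ is injective, and since $R$ is onto so is $\mathcal{F}$, with $\mathcal{F}^{-1}=\mathcal{F}\circ R=R\circ\mathcal{F}$. Because $\dim\mathcal{D}_{l,l'}=\dim\mathcal{D}_{-l',-l}=p^{l'-l}$, the restriction $\mathcal{F}\colon\mathcal{D}_{l,l'}\to\mathcal{D}_{-l',-l}$ is a linear isomorphism. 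Continuity is then immediate: if $\{\phi_n\}$ is a null sequence in $\mathcal{D}$ with fixed parameters $l\le l'$, then each $\widehat{\phi_n}\in\mathcal{D}_{-l',-l}$ by the first step, and $\norm{\widehat{\phi_n}}_\infty\le\int_{B(0,p^{l'})}|\phi_n|\,dx\le p^{l'}\norm{\phi_n}_\infty\to0$, so $\{\widehat{\phi_n}\}$ is a null sequence; hence $\mathcal{F}$ is continuous, and $\mathcal{F}^{-1}=\mathcal{F}\circ R$ with $R$ continuous is continuous too.

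The only step that is more than bookkeeping is the inversion formula $\widehat{\widehat f}=Rf$ on $\mathcal{D}$, and even that collapses, through \eqref{FB}, to the elementary character orthogonality \eqref{integral-chi}; the rest is just tracking how the parameters of constancy and compactness transform, which is exactly what the filtration $\{\mathcal{D}_{l,l'}\}$ is built to automate. So I expect no genuine obstacle here beyond keeping the parameter bookkeeping straight.
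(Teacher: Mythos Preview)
Your argument is correct and complete. The paper itself does not prove this proposition at all: it is stated with a bare citation to Taibleson \cite[Chapter II 3]{t} and no proof is given, since the result is standard background rather than a contribution of the paper. What you have written is the usual direct verification, and it is sound: the filtration by the finite-dimensional spaces $\mathcal{D}_{l,l'}$, the parameter swap $\mathcal{F}(\mathcal{D}_{l,l'})\subset\mathcal{D}_{-l',-l}$ read off from \eqref{FB}, the inversion $\mathcal{F}^2=R$, and the estimate $\|\widehat{\phi_n}\|_\infty\le p^{l'}\|\phi_n\|_\infty$ for continuity are exactly how one proves this in the literature. There is nothing to compare against in the paper beyond the citation.
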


 The {\em Fourier transform of a distribution} $ f\in \mathcal{D}'$ is a new distribution  $ \widehat{f}\in \mathcal{D}'$ defined by the duality
$$
\langle\widehat{f},\phi\rangle=\langle f,\widehat{\phi}\rangle, \quad \forall \phi \in \mathcal{D}.
$$
%%\begin{prop}[\cite{t}, Chapter II 3]  \marginpar{delete it, if not used}
Actually, the Fourier transformation $f\mapsto \widehat{f}$ is a homeomorphism of $\mathcal{D}'$ onto $\mathcal{D}'$ under the weak topology \cite[ Chapter II 3]{t}.
%\end{prop}
%%We recall that $\delta_a$ is the Dirac measure.

\subsection{Zeros of distribution and Fourier transform  of a discrete  measure}\label{subset:zeros}

Let $f \in \mathcal{D}'$ be a distribution in $\Q_p$.  A  point $x\in \Q_p$ is  called a {\em  zero} of  $f$ if there exists an integer $n_0$
  such that  $$ \langle f, 1_{B(y,p^{n})}\rangle=0, \quad \text {for all }  y\in B(x,p^{n_0})  \text{ and all integers  }  n\leq  n_0 \text.$$
Hereafter, it will be convenient to use  $\mathcal{Z}_f$ to denote the set consisting of all zeros of $f$.
Observe that 
 $\mathcal{Z}_f$ is  the maximal open set $U$ on which $f$ vanishes, i.e.
 $\langle f, \phi\rangle=0$ for all $\phi \in \mathcal{D}$ such that the support of $\phi$ is contained in $U$.
 %$\operatorname{supp}(\phi)\subset O$.  
 %Remark that  $x\in \mathcal{Z}_f $ if there exists an integer $n_0$
  %such that  $$ \langle f, 1_{B(x,p^{-n})}\rangle=0, \quad \text {for all integers  }  n\geq n_0.$$
 The {\em support}  of a distribution $f$ is defined as  the complementary set of  $\mathcal{Z}_f$ and is denoted by  $\operatorname{supp}(f)$.

Let $E$ be a  uniformly discrete set in $\Q_p$. 
Define the quality
%$$n_E:=\max\{n\in \Z:  \exists~ \lambda, \lambda^{\prime} \in E \text { such that } |\lambda-\lambda^{\prime}|_p= p^{-n} \}.$$
\begin{align}\label{def-n_E}
n_E:=\max_{\substack {\lambda, \lambda^{\prime}\in E \\  \lambda\neq \lambda^{\prime}} } v_p(\lambda- \lambda^{\prime}).
\end{align}
The following proposition characterizes the structure of  $\ZE$, the set of zeros of the
Fourier transform of the discrete  measure $\delta_E$, i.e. it is bounded and is a union of spheres centered at $0$.
\begin{prop}[\cite{FFLS}, Proposition 2.9]\label{zeroofE} 
Let $E$ be a  uniformly discrete set in $\Q_p$.\\
\indent {\rm (1)} 
If $\xi\in \ZE$,  then $S(0,|\xi|_p)\subset \ZE$.\\
\indent {\rm (2)} The set $\ZE$ is bounded. Moreover, we have
\begin{align}\label{nE}
\ZE\subset   B(0,p^{n_E+1}).
\end{align}
\end{prop}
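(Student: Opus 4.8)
\emph{Setup.} Throughout we assume $\Card(E)\ge 2$ (as needed for $n_E$ to be defined; the remaining cases are degenerate). By \eqref{FB} we have $\widehat{1_{B(\eta,p^{n})}}(\lambda)=\chi(-\eta\lambda)\,p^{n}\,1_{B(0,p^{-n})}(\lambda)$, so by duality
\begin{equation*}
\langle\widehat{\delta_E},1_{B(\eta,p^{n})}\rangle=\langle\delta_E,\widehat{1_{B(\eta,p^{n})}}\rangle=p^{n}\!\!\!\sum_{\lambda\in E\cap B(0,p^{-n})}\!\!\!\chi(-\eta\lambda)=:p^{n}S_{n}(\eta),
\end{equation*}
a finite sum since $E$ is uniformly discrete. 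Hence $\xi\in\ZE$ if and only if there is $n_{0}\in\Z$ with $S_{n}(\eta)=0$ for all $n\le n_{0}$ and all $\eta\in B(\xi,p^{n_0})$, and if one such $n_0$ works then so does every smaller one. Since $S_{n}(0)=\Card(E\cap B(0,p^{-n}))>0$ for all sufficiently negative $n$, we get $0\notin\ZE$. Finally, replacing $E$ by a translate $E+t$ multiplies the distribution $\widehat{\delta_E}$ by the nowhere-vanishing locally constant function $\overline{\chi_t}$, which does not change its zero set, and $n_{E+t}=n_E$; so we may and do assume $0\in E$.

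\emph{Part (2).} Suppose $\xi\in\ZE$ with $v_{p}(\xi)\le -n_{E}-2$, and take $\eta=\xi$. For any integer $n$ the term $\lambda=0$ contributes $1$, while every other $\lambda\in E\cap B(0,p^{-n})$ has $v_{p}(\lambda)=v_{p}(\lambda-0)\le n_{E}$, so $v_{p}(\xi\lambda)\le -2$ and $\chi(-\xi\lambda)$ is a primitive $p^{s}$-th root of unity with $s\ge 2$. Thus $S_n(\xi)=0$ expresses a vanishing sum of roots of unity of $p$-power order in which the only root of order $1$ is this single $1$. By the classical structure theorem for vanishing sums of roots of unity of prime-power order --- such a multiset is a disjoint union of cosets of the group $\mu_p$ of $p$-th roots of unity --- the coset containing that $1$ is all of $\mu_p$, hence contains a primitive $p$-th root of unity; but no remaining summand is a primitive $p$-th root, a contradiction. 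So $S_n(\xi)\ne0$ for every $n$, and since $\xi\in\ZE$ would require $S_n(\xi)=0$ for all $n$ below some threshold, $\xi\notin\ZE$. Therefore $\ZE\subset B(0,p^{n_{E}+1})$; in particular $\ZE$ is bounded.

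\emph{Part (1).} Let $\xi\in\ZE$, put $m=v_{p}(\xi)$ (so $\xi\ne0$ and $m\ge -n_E-1$ by Part (2)), and fix a witness $n_{0}\le -m-1$. For $\eta\in B(\xi,p^{n_{0}})$ one has $v_{p}(\eta)=m$, and each $\lambda\in E\cap B(0,p^{m})$ contributes $1$ to $S_n(\eta)$; writing $a:=\Card(E\cap B(0,p^{m}))$, the zero condition becomes
\begin{equation*}
a+\sum_{j=n}^{-m-1}\ \sum_{\substack{\lambda\in E\\ v_{p}(\lambda)=j}}\chi(-\eta\lambda)=0\qquad\text{for all }n\le n_{0},\ \eta\in B(\xi,p^{n_0}).
\end{equation*}
When $v_p(\lambda)=j\le -m-1$, the root $\chi(-\eta\lambda)$ has order $p^{-(m+j)}$ and is determined by the residue of $(p^{-m}\eta)(p^{-j}\lambda)$ modulo $p^{-(m+j)}$, with $p^{-m}\eta,\,p^{-j}\lambda\in\Zp^{\times}$. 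Subtracting the identity for $n+1$ from that for $n$ isolates, for each $j\le n_0-1$, the single-scale sum $\sum_{v_p(\lambda)=j}\chi(-\eta\lambda)$ and shows it vanishes on $B(\xi,p^{n_0})$; applying the structure theorem to it, and to the displayed identity at $n=n_0$ for the finitely many remaining scales $j\in[n_0,-m-1]$, forces each multiset $\{\chi(-\eta\lambda):v_p(\lambda)=j\}$ to be a union of $\mu_p$-cosets of the prescribed total size, the top scale $j=-m-1$ carrying exactly $a$ copies of every primitive $p$-th root. This description is unaffected if $p^{-m}\eta$ is multiplied by a fixed unit, since multiplication by a unit permutes residues modulo $p^{-(m+j)}$ compatibly with reduction modulo $p^{-(m+j)-1}$; hence it depends only on $E$ and on $m$. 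Consequently, for any $\xi'$ with $v_p(\xi')=m$ the same description holds with $\eta$ running over $B(\xi',p^{n_0})$: every single-scale sum there vanishes and the top-scale sum equals $-a$, so $S_n(\xi')=0$ for all $n\le n_0$. Thus $\xi'\in\ZE$, i.e. $S(0,|\xi|_p)\subset\ZE$.

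\emph{Main obstacle.} The substance of Part (1) is the scale-by-scale bookkeeping: one must verify that the constraints the structure theorem imposes at the successive scales are genuinely independent of the point chosen in the sphere, and that, read in the converse direction, they are sufficient for a point of the sphere to lie in $\ZE$. This is exactly where the primeness of $p$ is used, through the transparent description of vanishing sums of $p$-power roots of unity.
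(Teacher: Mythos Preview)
The paper does not prove this proposition; it is quoted from \cite{FFLS} and used as a black box. So there is no ``paper's own proof'' to compare against, and I will simply assess your argument on its merits.

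\textbf{Part (2).} Your argument is clean and correct. After translating so that $0\in E$, you observe that for $v_p(\xi)\le -n_E-2$ the sum $S_n(\xi)$ contains exactly one term equal to $1$ (from $\lambda=0$) while every other term $\chi(-\xi\lambda)$ has order $p^s$ with $s\ge 2$. Schoenberg's theorem (Lemma~\ref{SchLemma} in the paper) then gives the contradiction, since the multiplicity of $1$ must equal that of a primitive $p$-th root. This is precisely the intended mechanism.

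\textbf{Part (1).} Your strategy is correct: decompose $S_n(\eta)$ by the valuation of $\lambda$, use Schoenberg to see that at each scale $j$ the multiset $\{\chi(-\xi\lambda):v_p(\lambda)=j\}$ is a union of $\mu_p$-cosets (with the top scale $j=-m-1$ carrying exactly $a$ copies of each primitive $p$-th root to match the $a$ ones), and then transport this via the automorphism $\zeta\mapsto\zeta^{w\bmod p^{-(m+j)}}$ of $\mu_{p^{-(m+j)}}$ to any other $\eta'=w\xi$ on the same sphere. Since such automorphisms preserve $\mu_p$-cosets, the vanishing persists. This is sound.

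Two points where the write-up is too compressed and risks being read as a gap:
\begin{itemize}
\item For scales $j\in[n_0,-m-1]$ you do \emph{not} know the single-scale sums vanish individually; you only know $S_{n_0}(\eta)=0$. What makes the scale-by-scale conclusion work is that a $\mu_p$-coset inside $\mu_{p^N}$ consists either of elements of a single order $p^k$ with $k\ge2$, or is $\mu_p$ itself. Hence the $\mu_p$-coset decomposition of the full multiset automatically respects the order (equivalently, scale) stratification. You should say this explicitly rather than ``applying the structure theorem \ldots\ for the finitely many remaining scales''.
\item The sentence ``every single-scale sum there vanishes and the top-scale sum equals $-a$'' is self-contradictory as written; you mean the single-scale sums at $j\le -m-2$ vanish while the one at $j=-m-1$ equals $-a$. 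Also, the final clause should read $S_n(\eta')=0$ for all $\eta'\in B(\xi',p^{n_0})$, not just $S_n(\xi')=0$; your argument does give this, but the conclusion needs to match the definition of a zero.
\end{itemize}

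With these clarifications the proof is complete. Your ``Main obstacle'' paragraph correctly identifies where the primeness of $p$ enters, via the transparent form of Lemma~\ref{SchLemma}.
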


%Remark that $n_0$  in the above proof depends only on the structure of $E$.  
%According to the above proof of the second assertion of Proposition  \ref{zeroofE}, we immediately get 

%By Lemma \ref{zeroofE} and \ref{boundedzero}, $\ZE$ is a bounded open set consisting of at most countable many spheres centered at $0$.

\subsection{Convolution and  multiplication of distributions}
Hereafter, the following notations are frequently used:
 $$
\Delta_k:=1_{B(0,p^k)}, \quad
\theta_k:=\widehat{\Delta}_k=p^k \cdot 1_{B(0,p^{-k})}.
$$
% The later is equal to $p^k1_{B(0,p^{-k})}$. 
Let $f,g\in \mathcal{D}'$
 be two distributions. We define the {\em convolution} of $f$ and $g$ by
$$
\langle f*g,\phi \rangle =\lim\limits_{k\to \infty}	 \langle f(x), \langle g(\cdot),\Delta_k(x)\phi(x+\cdot) \rangle \rangle,
$$
if the limit exists for all $\phi \in \mathcal{D}$. 

{%\color{red}
\begin{prop} [\cite{Aks}, Proposition  4.7.3] If $f\in \mathcal{D}^{\prime}$, then  $f*\theta_k\in \mathcal{E}$
with  the parameter of constancy at least  $-k$.
\end{prop}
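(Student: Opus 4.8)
The plan is to reduce the distributional convolution to an explicit function and then read off its parameter of constancy. The key observation is that here $\theta_k=p^k1_{B(0,p^{-k})}$ is itself a Bruhat--Schwartz test function, and moreover an \emph{even} one, since $B(0,p^{-k})$ is an additive subgroup. Applying the definition of convolution (with the limit index of that definition renamed, say to $m$, to avoid the clash with the fixed $k$), the inner pairing simplifies: for fixed $x$ and $\phi\in\mathcal D$,
\[
\big\langle \theta_k(\cdot),\,\Delta_m(x)\,\phi(x+\cdot)\big\rangle=\Delta_m(x)\,(\theta_k*\phi)(x).
\]
Since $\theta_k*\phi$ is again a test function (the convolution of two elements of $\mathcal D$), it has compact support, so for all large $m$ the truncation $\Delta_m$ acts trivially on it; hence the limit defining $\langle f*\theta_k,\phi\rangle$ stabilises and equals $\langle f,\theta_k*\phi\rangle$. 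In other words, $f*\theta_k$ is the distribution $\phi\mapsto\langle f,\theta_k*\phi\rangle$.

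Next I would show that this distribution is induced by the pointwise-defined function
\[
F(x):=\big\langle f(y),\,\theta_k(x-y)\big\rangle=p^k\,\big\langle f,\,1_{B(x,p^{-k})}\big\rangle ,
\]
which makes sense because $y\mapsto\theta_k(x-y)=p^k1_{B(x,p^{-k})}(y)$ is a test function for each fixed $x$. Writing a general $\phi\in\mathcal D$ as a finite linear combination $\sum_i c_i\,1_{B(x_i,p^{l})}$ with $l\le -k$ (any test function admits such a representation after refining the balls), a direct computation using the ultrametric dichotomy---two balls are either nested or disjoint---gives $1_{B(0,p^{-k})}*1_{B(x_i,p^{l})}=p^{l}1_{B(x_i,p^{-k})}$, so that $\theta_k*\phi=\sum_i c_i\,p^{k+l}1_{B(x_i,p^{-k})}$. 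Pairing with $f$ and comparing term by term with $\int_{\Qp}F\phi\,dx=\sum_i c_i\,p^{l}\,F(x_i)$, where one uses that $F$ is constant on each $B(x_i,p^{l})$, yields $\langle f*\theta_k,\phi\rangle=\int_{\Qp}F(x)\phi(x)\,dx$. Thus $f*\theta_k$ is represented by the locally constant (hence locally integrable) function $F$.

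Finally, if $|x-x'|_p\le p^{-k}$ then $B(x,p^{-k})=B(x',p^{-k})$, so $F(x)=F(x')$; hence $F\in\mathcal E$ and its parameter of constancy is at least $-k$, which is the assertion. The only step that needs genuine care is the second one: one must be sure that $f*\theta_k$ is \emph{the} distribution coming from the pointwise function $F$---equivalently, that the pairing $\langle f,\cdot\rangle$ may be commuted past the sum/integral that builds $\theta_k*\phi$. This is exactly why one reduces $\phi$ to a finite combination of ball indicators: the reduction turns every integral in sight into a finite sum, after which the interchange is just linearity of $f$, and the rest is elementary bookkeeping with balls and the non-Archimedean valuation.
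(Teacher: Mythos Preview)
The paper does not prove this statement at all: it is quoted as \cite[Proposition 4.7.3]{Aks} and used as a black box, so there is no ``paper's own proof'' to compare against. Your argument is, however, a correct and self-contained justification. The three steps are all sound: (i) since $\theta_k\in\mathcal D$ is even and $\theta_k*\phi\in\mathcal D$ has compact support, the truncation $\Delta_m$ eventually acts trivially and the defining limit stabilises at $\langle f,\theta_k*\phi\rangle$; (ii) the reduction of $\phi$ to a finite sum $\sum_i c_i 1_{B(x_i,p^l)}$ with $l\le -k$ is legitimate (take $l=\min(\ell(\phi),-k)$), your ultrametric computation $1_{B(0,p^{-k})}*1_{B(x_i,p^l)}=p^l 1_{B(x_i,p^{-k})}$ is correct, and the term-by-term comparison with $\int F\phi\,dx$ goes through precisely because $F$ is already constant on balls of radius $p^{-k}\ge p^l$; (iii) the parameter-of-constancy claim is immediate from $B(x,p^{-k})=B(x',p^{-k})$ whenever $|x-x'|_p\le p^{-k}$. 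Nothing is missing.
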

}

%Remark  that $f*\theta_k\in \mathcal{E}$  for any distribution  $f\in \mathcal{D}' $ ().
We define the
{\em  multiplication} of $f$ and $g$ by
$$
\langle f\cdot g,\phi \rangle =\lim\limits_{k\to \infty}	 \langle g, ( f*\theta_k)\phi  \rangle,
$$
if the limit exists for all $\phi \in \mathcal{D}$.        The above definition of convolution is
compatible with the usual convolution of two integrable functions and the definition of multiplication is compatible with  the usual  multiplication of two locally integrable functions.

The following proposition shows that both the convolution and the multiplication are commutative whenever they are well defined and the convolution of two distributions is well defined if and only if the multiplication % \marginpar{what is "order"?}
of their Fourier transforms is well defined. 

\begin{prop} [\cite{Vvz}, Sections 7.1 and 7.5] \label{Conv-Mul} Let $f, g\in \mathcal{D}'$ be two distributions. Then\\
\indent {\rm (1)} \  If $f*g$ is well defined, so is $g*f$ and  $f*g=g*f$.\\
\indent {\rm (2)} \ If $ f\cdot g$ is well defined, so is  $g\cdot f$
 and  $ f\cdot g= g\cdot f$.\\
 \indent {\rm (3)} \ $f*g$ is well defined  if and only   $\widehat{f}\cdot \widehat{g}$ is well defined. In this case, we have
$
\widehat{f*g}=\widehat{f}\cdot\widehat{g}$ and
$ \widehat{f\cdot g}=\widehat{f}*\widehat{g}.
$
\end{prop}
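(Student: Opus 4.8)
The plan is to establish assertion (1) first, since the Fubini-type mechanism behind it reappears inside (3); then to deduce (3); and finally to obtain (2) as a formal consequence. Throughout I write $\check h$ for the reflection $\check h(x)=h(-x)$ of a distribution $h$, and I freely use that $h\mapsto\check h$ is a topological automorphism of $\mathcal{D}'$ that commutes with the Fourier transform, with convolution and with multiplication, that Fourier inversion reads $\widehat{\widehat h}=\check h$, and that $\phi\mapsto\widehat\phi$ is a bijection of $\mathcal{D}$. I also use the elementary ``test-function cases'': for $\eta\in\mathcal{D}$ and $h\in\mathcal{D}'$, both $h\cdot\eta$ and $h*\eta$ are always defined, lie in $\mathcal{E}'$ and $\mathcal{E}$ respectively, and satisfy $\widehat{h\eta}=\widehat h*\widehat\eta$ and $\widehat{h*\eta}=\widehat h\cdot\widehat\eta$; in particular $\widehat f*\theta_k=\widehat{f\Delta_k}$, and for $h\in\mathcal{E}'$ the Fourier transform $\widehat h$ is the $\mathcal{E}$-function $\xi\mapsto\langle h,\overline{\chi_\xi}\rangle$.

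For (1), fix $\phi\in\mathcal{D}$ with parameter of compactness $\ell'$ and take $k\ge\ell'$. The ultrametric inequality shows that on the support of the genuine test function $(x,y)\mapsto\Delta_k(x)\phi(x+y)\in\mathcal{D}(\Qp\times\Qp)$ the extra factor $\Delta_k(y)$ is identically $1$; inserting it and applying the Fubini theorem for the tensor product $f\otimes g$ on $\mathcal{D}(\Qp\times\Qp)$ gives
$$\big\langle f(x),\big\langle g(y),\Delta_k(x)\phi(x+y)\big\rangle\big\rangle=\big\langle g(y),\Delta_k(y)\,v_k(y)\big\rangle,\qquad v_k(y):=\big\langle f(x),\Delta_k(x)\phi(x+y)\big\rangle.$$
A second support computation shows that $v_k$ is supported in $B(0,p^k)$ and agrees there with $\psi_f(y):=\langle f(x),\phi(x+y)\rangle\in\mathcal{E}$, so $\Delta_k v_k=v_k=\Delta_k\psi_f$ and the left-hand side equals $\langle g,\Delta_k\psi_f\rangle$ for every $k\ge\ell'$. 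Since $\langle g,\Delta_k\psi_f\rangle$ is precisely the $k$-th term in the defining sequence of $\langle g*f,\phi\rangle$, letting $k\to\infty$ shows that the limit defining $\langle f*g,\phi\rangle$ exists if and only if that defining $\langle g*f,\phi\rangle$ does, and then they coincide; as $\phi$ was arbitrary, (1) follows.

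For (3), I reduce both sides to the common expression $\lim_k\langle f,\Delta_k\,\widehat{\widehat g\cdot\phi}\rangle$. Using $\widehat\phi(x+\,\cdot\,)=\widehat{\phi\,\overline{\chi_x}}$ and moving the hat across the pairing with $g$ yields $\langle g(y),\widehat\phi(x+y)\rangle=\widehat{\widehat g\cdot\phi}\,(x)$, where $\widehat g\cdot\phi\in\mathcal{E}'$; hence
$$\langle\widehat{f*g},\phi\rangle=\langle f*g,\widehat\phi\rangle=\lim_{k\to\infty}\big\langle f,\ \Delta_k\cdot\widehat{\widehat g\cdot\phi}\ \big\rangle.$$
On the other hand $\widehat f*\theta_k=\widehat{f\Delta_k}$ is the $\mathcal{E}$-function $\xi\mapsto\langle f(x),\Delta_k(x)\overline{\chi_\xi(x)}\rangle$, so a Fubini swap with the joint test function $(\xi,x)\mapsto\Delta_k(x)\phi(\xi)\overline{\chi_\xi(x)}$, followed by $\langle\widehat g(\xi),\phi(\xi)\overline{\chi_\xi(x)}\rangle=\widehat{\widehat g\cdot\phi}\,(x)$, gives
$$\langle\widehat f\cdot\widehat g,\phi\rangle=\lim_{k\to\infty}\big\langle\widehat g,(\widehat f*\theta_k)\phi\big\rangle=\lim_{k\to\infty}\big\langle f,\ \Delta_k\cdot\widehat{\widehat g\cdot\phi}\ \big\rangle.$$
Because $\phi\mapsto\widehat\phi$ is a bijection of $\mathcal{D}$, ``the limit exists for all $\phi$'' is the same requirement on both displays, so $f*g$ is well defined if and only if $\widehat f\cdot\widehat g$ is, and then $\widehat{f*g}=\widehat f\cdot\widehat g$. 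Applying this to $(\widehat f,\widehat g)$ in place of $(f,g)$ and using $\widehat{\widehat h}=\check h$ together with the compatibility of reflection with multiplication gives $\widehat{\widehat f*\widehat g}=\check f\cdot\check g=\widehat{\widehat{\,f\cdot g\,}}$, whence $\widehat{f\cdot g}=\widehat f*\widehat g$ (and the matching equivalence of well-definedness) by injectivity of the Fourier transform on $\mathcal{D}'$. Finally (2) is formal: $f\cdot g$ defined $\Leftrightarrow$ $\widehat f*\widehat g$ defined (by (3)) $\Leftrightarrow$ $\widehat g*\widehat f$ defined (by (1)) $\Leftrightarrow$ $g\cdot f$ defined (by (3)), and then $\widehat{f\cdot g}=\widehat f*\widehat g=\widehat g*\widehat f=\widehat{g\cdot f}$, so $f\cdot g=g\cdot f$.

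The main obstacle is not a single deep step but the sustained bookkeeping: keeping track of the reflections hidden in Fourier inversion, checking at each stage that the pairings in play (test function against distribution, $\mathcal{E}'$-distribution against $\mathcal{E}$-function, and the various parameter-dependent pairings) are legitimate and that the Fubini swaps really apply to genuine elements of $\mathcal{D}(\Qp\times\Qp)$, and --- most delicately --- verifying that the two families of limits can be identified \emph{termwise in $k$}, so that the quantifier ``for all $\phi\in\mathcal{D}$'' transports correctly through $\phi\mapsto\widehat\phi$. The ultrametric support estimates, which make the Fubini identities \emph{exact} for all large $k$ rather than merely asymptotic, are what keep this manageable.
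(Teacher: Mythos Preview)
The paper does not give its own proof of this proposition: it is stated with a bare citation to \cite{Vvz}, Sections~7.1 and~7.5, and no argument is supplied. So there is no ``paper's proof'' to compare against; your proposal is effectively a reconstruction of the argument that the cited reference is meant to cover.

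Your reconstruction is sound. The key ultrametric observation in (1) --- that for $k\ge\ell'(\phi)$ one may insert the extra cutoff $\Delta_k(y)$ for free and thereby reduce to a genuine test function on $\Qp\times\Qp$ where the Fubini swap is legitimate --- is exactly the mechanism that makes the $p$-adic theory cleaner than its Archimedean counterpart, and you exploit it correctly. The reduction of both $\langle\widehat{f*g},\phi\rangle$ and $\langle\widehat f\cdot\widehat g,\phi\rangle$ to the common sequence $\langle f,\Delta_k\cdot\widehat{\widehat g\cdot\phi}\rangle$ is clean and gives the ``termwise in $k$'' identification you need to transport the quantifier over $\phi$ through the Fourier bijection of $\mathcal{D}$. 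The derivation of the second identity in (3) by substituting $(\widehat f,\widehat g)$ and unwinding the reflections, and the deduction of (2) from (1) and (3), are correct and efficient. One small remark: in the paper's phrasing of (3), the clause ``in this case'' is slightly loose --- the second identity $\widehat{f\cdot g}=\widehat f*\widehat g$ of course presupposes that $f\cdot g$ is defined, which is an equivalence you establish separately (and correctly) via the reflection argument; you might make that explicit when writing this up.
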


The following proposition justifies an intuition which is crucial in the proof of Fuglede's conjecture on $\Qp$ (see \cite{FFLS} for more details). 

%{\color{red}
 \begin{prop}[\cite{FFLS}, Proposition 2.12]  \label{zeroproduct}
Let  $f, g\in \mathcal{D}'$ be two distributions. If $\operatorname{supp}(f) \cap \operatorname{supp}(g) =\emptyset $, then $f\cdot g$ is well defined and 
$f\cdot g =0$.
\end{prop}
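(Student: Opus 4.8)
The plan is to exploit the disjointness of the supports to make the defining limit for the multiplication collapse to $0$ term by term. Recall that by definition $\langle f\cdot g,\phi\rangle = \lim_{k\to\infty}\langle g,(f*\theta_k)\phi\rangle$ whenever this limit exists, so the whole task is to show that for every fixed $\phi\in\mathcal{D}$ the pairing $\langle g,(f*\theta_k)\phi\rangle$ is eventually $0$ in $k$. First I would fix $\phi\in\mathcal{D}$ and let $K=\operatorname{supp}(\phi)$, a compact set; I also set $F=\operatorname{supp}(f)$ and $G=\operatorname{supp}(g)$, which are closed and, by hypothesis, disjoint. The only points of $K$ that can contribute to $\langle g,(f*\theta_k)\phi\rangle$ are those lying in $G$, so I really only need to understand $(f*\theta_k)$ on the compact set $K\cap G$, which is disjoint from $F$.

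The key geometric observation is that since $F$ and $K\cap G$ are disjoint closed sets and $K\cap G$ is compact in the non-Archimedean space $\Qp$, there is a uniform separation: there exists $N\in\Z$ such that $|x-t|_p > p^{N}$ for all $x\in K\cap G$ and all $t\in F$; equivalently, for each $x\in K\cap G$ the ball $B(x,p^N)$ is contained in $\mathcal{Z}_f$, the maximal open set on which $f$ vanishes. Now recall that $f*\theta_k\in\mathcal{E}$ with parameter of constancy at least $-k$, and more concretely $(f*\theta_k)(x) = p^k\langle f, 1_{B(x,p^{-k})}\rangle$ (this is just $\langle f(\cdot), \theta_k(x-\cdot)\rangle = p^k\langle f(\cdot), 1_{B(0,p^{-k})}(x-\cdot)\rangle$, using that $\theta_k = p^k 1_{B(0,p^{-k})}$ and the ultrametric symmetry of balls). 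Therefore, as soon as $-k\le N$, i.e. $k\ge -N$, the ball $B(x,p^{-k})$ is contained in $B(x,p^N)\subset\mathcal{Z}_f$ for every $x\in K\cap G$, and hence by the very definition of a zero of a distribution $\langle f,1_{B(x,p^{-k})}\rangle = 0$. Thus $(f*\theta_k)$ vanishes identically on $K\cap G$ for all $k\ge -N$.

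Consequently, for $k\ge -N$ the test function $(f*\theta_k)\phi$ is supported in $K$ but vanishes on $K\cap G$, so its support is disjoint from $G=\operatorname{supp}(g)$; since $\mathcal{Z}_g$ is the maximal open set on which $g$ vanishes and $\operatorname{supp}\big((f*\theta_k)\phi\big)\subset\mathcal{Z}_g$, we get $\langle g,(f*\theta_k)\phi\rangle = 0$. Hence the defining limit exists and equals $0$ for every $\phi$, which shows simultaneously that $f\cdot g$ is well defined and that $f\cdot g=0$. (By Proposition \ref{Conv-Mul}(2) the symmetric statement $g\cdot f=0$ follows automatically, so no separate argument is needed.) I expect the main—and really the only—subtle point to be the uniform separation step: one must be careful that the separation constant $N$ is uniform over the compact piece $K\cap G$ rather than merely pointwise, and one must correctly identify $(f*\theta_k)(x)$ with the scaled pairing $p^k\langle f,1_{B(x,p^{-k})}\rangle$ so that the definition of ``zero of $f$'' applies verbatim; once these are in place the rest is bookkeeping with compact supports and the maximality characterization of $\mathcal{Z}_f$ and $\mathcal{Z}_g$.
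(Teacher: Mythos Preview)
The paper does not supply its own proof of this proposition; it is quoted from \cite{FFLS} without argument. So there is nothing to compare against, and the question is simply whether your proof is correct. It is.

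Your argument is the natural one: use the compactness of $K\cap G$ and the closedness of $F$ to get a uniform separation radius $p^N$, then use the explicit identification $(f*\theta_k)(x)=p^k\langle f,1_{B(x,p^{-k})}\rangle$ to see that $f*\theta_k$ vanishes on $K\cap G$ once $k\ge -N$, and conclude that the test function $(f*\theta_k)\phi$ has support inside $\mathcal{Z}_g$. The one step you glide over---``vanishes on $K\cap G$, so its support is disjoint from $G$''---is justified because $(f*\theta_k)\phi$ is locally constant, hence any point where it vanishes is automatically outside its (clopen) support; since the support already sits in $K$, disjointness from $K\cap G$ gives disjointness from $G$. With that remark made explicit, the proof is complete and is exactly the kind of direct argument one would expect for this statement.
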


%}

The multiplication of some special distributions   has a simple form. That is the case for  the multiplication of a uniformly locally constant function and a distribution.

\begin{prop}[\cite{Vvz} Section 7.5, Example 2]\label{prod}
	Let $f\in \mathcal{E}$ and let $G\in \mathcal{D}'$. Then for any $\phi\in \mathcal{D}$, we have
	$
	\langle f\cdot G, \phi \rangle = \langle G, f\phi \rangle.
	$
\end{prop}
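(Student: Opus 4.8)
The plan is to exploit the fact that a uniformly locally constant function is reproduced exactly by the regularizations $f*\theta_k$ for all large $k$, so that the limit in the definition of $f\cdot G$ becomes eventually constant and equal to $\langle G, f\phi\rangle$.

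First I would observe that $f\in\mathcal{E}$ is locally integrable: on each ball $B(0,p^m)$ the function $f$ takes only finitely many values (a parameter of constancy partitions the ball into finitely many pieces on which $f$ is constant), hence $f$ is bounded there. Thus $f$ defines a distribution, and since $\theta_k=p^k 1_{B(0,p^{-k})}\in\mathcal{D}$, the distributional convolution $f*\theta_k$ agrees with the classical convolution integral:
$$(f*\theta_k)(x)=\int_{\Qp} f(x-y)\,\theta_k(y)\,dy=p^k\int_{B(0,p^{-k})} f(x-y)\,dy.$$
If one prefers not to invoke compatibility of the two notions of convolution, this identity is checked directly from the definition $\langle f*\theta_k,\phi\rangle=\lim_j\langle f(x),\langle\theta_k(\cdot),\Delta_j(x)\phi(x+\cdot)\rangle\rangle$: the inner pairing equals $\Delta_j(x)\int\theta_k(y)\phi(x+y)\,dy$, the factor $\Delta_j$ is eventually identically $1$ on the compact support of the integrand, and Fubini's theorem identifies the limit with $\int\phi\,(f*\theta_k)\,dx$.

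Next comes the key point. Fix an integer $\ell$ with $f(x+u)=f(x)$ for all $x\in\Qp$ and all $u\in B(0,p^\ell)$, which exists because $f\in\mathcal{E}$. For every $k\ge-\ell$ we have $B(0,p^{-k})\subseteq B(0,p^\ell)$, so $f(x-y)=f(x)$ whenever $y\in B(0,p^{-k})$; hence
$$(f*\theta_k)(x)=p^k f(x)\,\big|B(0,p^{-k})\big|=p^k f(x)\,p^{-k}=f(x),\qquad k\ge-\ell.$$
Finally, since $f\in\mathcal{E}$ and $\phi\in\mathcal{D}$ the product $f\phi$ is uniformly locally constant with compact support, so $f\phi\in\mathcal{D}$ and $\langle G,f\phi\rangle$ is meaningful; feeding the displayed identity into the definition of multiplication gives
$$\langle f\cdot G,\phi\rangle=\lim_{k\to\infty}\langle G,(f*\theta_k)\phi\rangle=\lim_{k\to\infty}\langle G,f\phi\rangle=\langle G,f\phi\rangle,$$
so in particular the limit exists (hence $f\cdot G$ is well defined) and has the asserted value.

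The only slightly delicate point is the identification in the second paragraph of the distributional convolution $f*\theta_k$ with the elementary integral — that is, removing the truncation $\Delta_j$ and applying Fubini — which is precisely where local integrability of $f$ and the compact supports of $\theta_k$ and $\phi$ are used; everything else is a one-line computation.
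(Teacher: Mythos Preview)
Your argument is correct. The paper does not supply its own proof of this proposition---it is quoted directly from \cite{Vvz}, Section 7.5, Example 2---so there is nothing to compare against beyond noting that your approach is the standard one: use that $f*\theta_k=f$ identically once $k$ exceeds minus the parameter of constancy of $f$, so the limit defining $f\cdot G$ stabilizes at $\langle G,f\phi\rangle$.
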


 For a distribution $f\in \mathcal{D}'$, we define its
 {\em regularization} by the sequence of test functions  (\cite[ Proposition  4.7.4]{Aks})
$$
\Delta_k \cdot(f*\theta_k) \in \mathcal{D}.
$$
The regularization of a distribution  converges  to the distribution itself
with respect to the weak topology.

\begin{prop}[\cite{Aks} Lemma 14.3.1]\label{limit}
	Let $f$ be a distribution in $\mathcal{D}'$.
	Then $\Delta_k \cdot (f*\theta_k)\to f$ in $\mathcal{D}'$ as $k\to \infty$. Moreover, for any test function $\phi\in \mathcal{D}$ we have
	$$
	\langle \Delta_k \cdot(f*\theta_k), \phi \rangle=\langle f, \phi \rangle, \quad \forall k\ge \max\{-\ell,\ell^{\prime}\},
	$$
	where $\ell$ and $\ell^{\prime}$ are the parameter of constancy and the  parameter of compactness of the function $\phi$ defined in Subsection \ref{subsec2.4}. % supported on  $B(0,p^{\ell^{\prime}})$.
\end{prop}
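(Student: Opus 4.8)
The plan is to establish the displayed identity $\langle\Delta_k\cdot(f*\theta_k),\phi\rangle=\langle f,\phi\rangle$ for every $\phi\in\mathcal{D}$ and every $k\ge\max\{-\ell,\ell'\}$; the asserted convergence $\Delta_k\cdot(f*\theta_k)\to f$ in $\mathcal{D}'$ then follows at once, since for each fixed $\phi$ the pairing is eventually (in $k$) constant and equal to $\langle f,\phi\rangle$, which is precisely convergence in the weak topology of $\mathcal{D}'$.

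First I would rewrite the left-hand side as a pairing of $f$ against an explicit test function. By \cite[Proposition 4.7.3]{Aks} the convolution $f*\theta_k$ lies in $\mathcal{E}$ (so $\Delta_k\cdot(f*\theta_k)$ is genuinely a test function), and since $\Delta_k=1_{B(0,p^k)}\in\mathcal{E}$ as well, Proposition \ref{prod} gives $\langle\Delta_k\cdot(f*\theta_k),\phi\rangle=\langle f*\theta_k,\Delta_k\phi\rangle$, with $\Delta_k\phi\in\mathcal{D}$. Next, because $\theta_k=p^k1_{B(0,p^{-k})}$ is itself a test function and is even (the ball $B(0,p^{-k})$ being an additive subgroup, hence symmetric), I would unwind the definition of distributional convolution recalled above: for $\psi\in\mathcal{D}$ one computes $\langle\theta_k(t),\Delta_j(x)\psi(x+t)\rangle_t=\Delta_j(x)(\theta_k*\psi)(x)$, and since $\theta_k*\psi\in\mathcal{D}$ the truncations $\Delta_j$ stabilise, so the defining limit collapses to $\langle f*\theta_k,\psi\rangle=\langle f,\theta_k*\psi\rangle$. (Equivalently, $f*\theta_k$ is represented by the uniformly locally constant function $x\mapsto\langle f(t),\theta_k(x-t)\rangle$, and one interchanges $\int dx$ with $\langle f(t),\cdot\rangle$, which is legitimate by compact support.) Combining the two steps yields $\langle\Delta_k\cdot(f*\theta_k),\phi\rangle=\langle f,\theta_k*(\Delta_k\phi)\rangle$.

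It then remains to verify the purely ultrametric identity $\theta_k*(\Delta_k\phi)=\phi$ for $k\ge\max\{-\ell,\ell'\}$. Since $k\ge\ell'$, $\operatorname{supp}(\phi)\subset B(0,p^{\ell'})\subset B(0,p^k)$, hence $\Delta_k\phi=\phi$. Since $k\ge-\ell$, we have $B(0,p^{-k})\subset B(0,p^\ell)$, and as $\phi$ is constant on every ball of radius $p^\ell$ the function $y\mapsto\phi(x-y)$ equals $\phi(x)$ throughout $B(0,p^{-k})$ for each $x$; therefore $(\theta_k*\phi)(x)=p^k\int_{B(0,p^{-k})}\phi(x-y)\,dy=p^k\cdot p^{-k}\phi(x)=\phi(x)$, using $|B(0,p^{-k})|=p^{-k}$. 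This gives $\langle\Delta_k\cdot(f*\theta_k),\phi\rangle=\langle f,\phi\rangle$ for all such $k$, as desired.

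The one step requiring care is the reduction in the second paragraph: one must unwind the given definition of $f*g$ (with its auxiliary limit over the cut-offs $\Delta_j$) cleanly enough to obtain both the adjoint formula $\langle f*\theta_k,\psi\rangle=\langle f,\theta_k*\psi\rangle$ and the fact that $f*\theta_k$ is an honest $\mathcal{E}$-function, so that Proposition \ref{prod} applies. Once this bookkeeping is done the rest is the one-line computation above; the remaining technical points ($\theta_k*\psi\in\mathcal{D}$, the interchange of the integral with the distribution) are routine consequences of uniform local constancy and compact support.
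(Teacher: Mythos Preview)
The paper does not supply its own proof of this proposition: it is quoted verbatim as \cite[Lemma~14.3.1]{Aks} and used as a black box, so there is no in-paper argument to compare against. Your proof is correct and is essentially the standard one: reduce $\langle\Delta_k\cdot(f*\theta_k),\phi\rangle$ to $\langle f,\theta_k*(\Delta_k\phi)\rangle$ via Proposition~\ref{prod} and the adjoint formula for convolution with a test function, then use the ultrametric facts that $\Delta_k\phi=\phi$ once $k\ge\ell'$ and that $\theta_k$ acts as the identity by convolution on functions constant on balls of radius $p^{-k}\le p^{\ell}$. The only point worth a remark is that your unwinding of the defining limit for $f*\theta_k$ is clean precisely because $\theta_k*\psi\in\mathcal{D}$, so the cut-offs $\Delta_j$ eventually do nothing; this is exactly the ``stabilisation'' you invoke, and it is justified.
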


This approximation of distribution by test functions allows us to construct a   space which is bigger than the space of distributions. This larger space is the Colombeau algebra, which will be presented below. Recall that in the space of Bruhat-Schwartz distributions, the convolution and the multiplication are not well defined for all couples of distributions.  But in the Colombeau algebra,  the convolution and the multiplication are well defined  and these two operations are associative. 

\subsection{Colombeau algebra of generalized functions}
Consider the space $\mathcal{P}: = \mathcal{D}^\mathbb{N}$ of all sequences $\{f_k\}_{k\in \mathbb{N}}$ of test functions.  We introduce an algebra
structure on $\mathcal{P}$, by defining the operations component-wisely
$$
\{f_{k}\}+\{ g_{k}\}=\{f_{k}+g_{k}\},
$$
$$
\{f_{k}\}\cdot\{g_{k}\}=\{f_{k}\cdot g_{k}\},
$$
where $\{f_{k}\}, \{g_{k}\}\in \mathcal{P}$. 

 Let $\mathcal{N}$ be the sub-algebra of elements $\{f_k\}_{k\in \mathbb{N}}\in \mathcal{P}$  such that for any compact set $K\subset\Q_p$ there exists $N\in \mathbb{N}$ such that   $f_{k}(x)=0$ for all $k\geq N, x\in K$. Clearly, $\mathcal{N}$ is an ideal in the algebra $\mathcal{P}$.   %\marginpar{$\mathcal{N}$ has been used for the set of zeros?}
The  {\em  Colombeau-type algebra} is defined by the quotient
$$
\mathcal{G}=\mathcal{P}/\mathcal{N}.
$$
 The equivalence class of sequences which defines an element in  $ \mathcal{G}$ will be denoted by $\mathbf{f}=[f_k]$, called a \textit{generalized function}.
 
 For any $\mathbf{f}=[f_k], \mathbf{g}=[g_k]\in \mathcal{G}$, the addition   and multiplication  are defined as  
 $$
 \quad \mathbf{f}+\mathbf{g}=[f_k+g_k],  \quad \mathbf{f}\cdot\mathbf{g}=[f_k\cdot g_k].
$$
%$$
%\widehat{\mathbf{f}}=[\widehat{f_k}], \quad \mathbf{f}*\mathbf{g}=[f_k*g_k],  \quad \mathbf{f}\cdot\mathbf{g}=[f_k\cdot g_k].
%$$

Obviously,  $(\mathcal{G}, +, \cdot)$     is an associative and commutative algebra.
  %We have $\widehat{\mathbf{f}* \mathbf{g}}=\widehat{\mathbf{f}} \cdot \widehat{\mathbf{g}}$ for all $\mathbf{f}, \mathbf{g}\in \mathcal{G}$.

\begin{thm}[\cite{Aks} Theorem 14.3.3]\label{thm:embedding}
	The map $f\mapsto \mathbf{f}=[\Delta_k (f*\theta_k)]$ from  $\mathcal{D}'$  to $\mathcal{G}$ is  a linear embedding.
%{\color{red} Furthermore, this embedding is a homomorphism of algebras.} ?
\end{thm}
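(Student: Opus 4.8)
The statement to prove is Theorem~\ref{thm:embedding}: the map $f\mapsto \mathbf{f}=[\Delta_k(f*\theta_k)]$ from $\mathcal{D}'$ to $\mathcal{G}$ is a linear embedding.

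\medskip

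\noindent\textbf{Plan of proof.} The plan is to verify three things: (i) the map is well defined, i.e. the sequence $\{\Delta_k(f*\theta_k)\}_k$ really is a sequence of test functions in $\mathcal{D}$, so that it represents an element of $\mathcal{P}$ and hence a class in $\mathcal{G}$; (ii) the map is linear; and (iii) the map is injective. Well-definedness in part (i) is immediate from the facts already recorded in the excerpt: by the proposition of Albeverio--Khrennikov--Shelkovich quoted just before Proposition~\ref{limit}, $f*\theta_k\in\mathcal{E}$ with parameter of constancy at least $-k$, and multiplying by $\Delta_k=1_{B(0,p^k)}$ cuts it down to compact support, so $\Delta_k(f*\theta_k)\in\mathcal{D}$; thus $\{\Delta_k(f*\theta_k)\}_k\in\mathcal{P}=\mathcal{D}^{\mathbb N}$ and the class $[\Delta_k(f*\theta_k)]$ makes sense. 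Linearity in part (ii) is formal: convolution with $\theta_k$, multiplication by $\Delta_k$, and passage to the quotient $\mathcal{G}=\mathcal{P}/\mathcal{N}$ are all linear operations, so $f\mapsto[\Delta_k(f*\theta_k)]$ is linear.

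\medskip

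\noindent\textbf{The main point — injectivity.} The only substantive step is injectivity, and by linearity it suffices to show that if $f\in\mathcal{D}'$ maps to the zero class, i.e. $\{\Delta_k(f*\theta_k)\}_k\in\mathcal{N}$, then $f=0$ as a distribution. Unwinding the definition of $\mathcal{N}$: for every compact $K\subset\Q_p$ there is $N$ such that $\Delta_k(f*\theta_k)$ vanishes on $K$ for all $k\ge N$. To conclude $f=0$ I would test $f$ against an arbitrary $\phi\in\mathcal{D}$. Let $\ell=\ell(\phi)$ and $\ell'=\ell'(\phi)$ be its parameters of constancy and compactness, and take $K=B(0,p^{\ell'})\supset\operatorname{supp}(\phi)$, which is compact. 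Choose $k\ge\max\{N,-\ell,\ell'\}$ where $N$ is associated to $K$ as above. On the one hand, since $\operatorname{supp}(\phi)\subset K$ and $\Delta_k(f*\theta_k)$ vanishes identically on $K$, we get $\langle\Delta_k(f*\theta_k),\phi\rangle=0$. On the other hand, Proposition~\ref{limit} (Lemma 14.3.1 of \cite{Aks}) gives exactly $\langle\Delta_k(f*\theta_k),\phi\rangle=\langle f,\phi\rangle$ for all $k\ge\max\{-\ell,\ell'\}$. Combining the two, $\langle f,\phi\rangle=0$. Since $\phi\in\mathcal{D}$ was arbitrary, $f=0$ in $\mathcal{D}'$, which proves injectivity and completes the argument.

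\medskip

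\noindent\textbf{Where the work is hidden.} In this write-up the proof is short because it leans entirely on Proposition~\ref{limit}: the statement that the regularization $\Delta_k(f*\theta_k)$ not merely converges weakly to $f$ but is \emph{exactly equal} to $f$ when tested against any fixed $\phi$, once $k$ is large enough (depending on $\phi$). That stabilization property is the real engine — it is what upgrades weak convergence (which alone would not rule out $f$ landing in the ideal $\mathcal{N}$) to the conclusion that $\mathcal{N}$ meets the image of $\mathcal{D}'$ only in $0$. So the anticipated obstacle is not in the present proof but is already discharged by Proposition~\ref{limit}; given that proposition, the embedding theorem is a clean two-line deduction plus the routine checks of well-definedness and linearity. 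I would present it in that order: well-defined, linear, injective via Proposition~\ref{limit}.
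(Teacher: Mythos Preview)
Your argument is correct. Note, however, that the paper does not supply its own proof of this theorem: it is quoted verbatim as Theorem~14.3.3 of \cite{Aks}, with no argument given. So there is no ``paper's proof'' to compare against beyond the reference itself. Your write-up is the natural proof one would expect in that reference: well-definedness from the fact that $\Delta_k(f*\theta_k)\in\mathcal{D}$, linearity by construction, and injectivity via the exact stabilization statement of Proposition~\ref{limit}. The only comment is that you correctly identify Proposition~\ref{limit} as doing all the work; once that proposition is in hand, the embedding is indeed a two-line deduction.
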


Each distribution $f\in \mathcal{D}'$ is embedded into 
$\mathcal{G}$ by the mapping which associates $f$ to the generalized function determined by the regularization of $f$. 
Thus we obtain that the multiplication  defined on the $\mathcal{D}'$ is associative in the following sense.
\begin{prop}[\cite{FFLS}, Proposition 2.16]\label{associative}
 Let $f,g,h\in \mathcal{D}'$. If $(f\cdot g)\cdot h$ and $f\cdot (g\cdot h)$ are well defined as multiplications of distributions, we have  $$(f\cdot g)\cdot h =f\cdot (g\cdot h).$$

\end{prop}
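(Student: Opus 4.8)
The plan is to realize the associativity inside the Colombeau algebra $\mathcal{G}$, where multiplication is everywhere defined and associative, and then read the conclusion back in $\mathcal{D}'$. Write $\mathbf{f}=[\Delta_k(f*\theta_k)]$, $\mathbf{g}=[\Delta_k(g*\theta_k)]$, $\mathbf{h}=[\Delta_k(h*\theta_k)]$ for the images of $f,g,h$ in $\mathcal{G}$ under the embedding of Theorem \ref{thm:embedding}, and set
\[
w_k:=\Delta_k(f*\theta_k)\,\Delta_k(g*\theta_k)\,\Delta_k(h*\theta_k)\in\mathcal{D},
\]
the canonical representative of the unambiguous triple product $\mathbf{f}\cdot\mathbf{g}\cdot\mathbf{h}\in\mathcal{G}$. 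The target is to show that the single sequence $(w_k)_k$ converges in $\mathcal{D}'$ both to $(f\cdot g)\cdot h$ and to $f\cdot(g\cdot h)$; since a sequence has at most one weak limit, this forces the two distributions to coincide.

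First a symmetry. Fix $\phi\in\mathcal{D}$ with parameters of constancy $\ell$ and compactness $\ell'$, and take $k\ge\max\{-\ell,\ell'\}$, so that $\Delta_k\equiv1$ on $\operatorname{supp}\phi$ and $\langle w_k,\phi\rangle=\int_{\Qp}(f*\theta_k)(g*\theta_k)(h*\theta_k)\phi\,dx$. Each of $f*\theta_k,g*\theta_k,h*\theta_k$ and $\phi$ is constant on balls of radius $p^{-k}$, so the identity $\int_{\Qp}(u*\theta_k)\psi\,dx=\langle u,\psi\rangle$ --- valid for $u\in\mathcal{D}'$ and $\psi\in\mathcal{D}$ constant on balls of radius $p^{-k}$ and supported in $B(0,p^{k})$, and itself a consequence of Proposition \ref{limit} --- lets us move any one of the three factors out of the integral into a pairing, giving, for $k$ large,
\[
\langle w_k,\phi\rangle=\langle g,(f*\theta_k)(h*\theta_k)\phi\rangle=\langle f,(g*\theta_k)(h*\theta_k)\phi\rangle=\langle h,(f*\theta_k)(g*\theta_k)\phi\rangle ,
\]
so that $\langle w_k,\phi\rangle$ is symmetric under permutations of $(f,g,h)$. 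On the other hand, unwinding the definition of the distributional product twice (using Proposition \ref{Conv-Mul} where needed to commute the outer factor into the first slot) yields, under the standing hypotheses,
\[
\langle(f\cdot g)\cdot h,\phi\rangle=\lim_{n\to\infty}\lim_{k\to\infty}\langle g,(f*\theta_k)(h*\theta_n)\phi\rangle,\qquad
\langle f\cdot(g\cdot h),\phi\rangle=\lim_{n\to\infty}\lim_{k\to\infty}\langle h,(g*\theta_k)(f*\theta_n)\phi\rangle ,
\]
both iterated limits existing by hypothesis. Hence it suffices to prove that each iterated limit equals the common diagonal limit $\lim_{k\to\infty}\langle w_k,\phi\rangle$; the displayed symmetry then identifies the two left-hand sides, for every $\phi$.

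The interchange of limits is where essentially all the work sits. One cannot argue by bare stabilization, since the sequences converge without being eventually constant. The structural input is the ultrametric identity $\theta_m*\theta_n=\theta_{\min\{m,n\}}$: for $m\ge n$ the function $u*\theta_n$ is exactly the average of $u*\theta_m$ over each ball of radius $p^{-n}$, so the discrepancy $(h*\theta_n)-(h*\theta_m)$ has zero mean on every such ball. Feeding this cancellation, together with the convergence $(f*\theta_k)(g*\theta_k)\to f\cdot g$ in $\mathcal{D}'$ (the same computation as in the symmetry step), into a Moore--Osgood type comparison --- in which the \emph{a priori} existence of both triple products supplies the missing uniformity --- one bounds the difference between the iterated and diagonal limits and shows it tends to $0$. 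Granting that estimate, $\langle(f\cdot g)\cdot h,\phi\rangle=\lim_{k\to\infty}\langle w_k,\phi\rangle=\langle f\cdot(g\cdot h),\phi\rangle$ for all $\phi\in\mathcal{D}$, which is the claim; everything else is routine bookkeeping with the notions recalled in Section \ref{sec:disctribution}.
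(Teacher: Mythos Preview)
The paper does not supply its own proof of this proposition; it is quoted from \cite{FFLS} and stated without argument, so there is nothing in the present paper to compare against directly. Evaluating your proposal on its own terms:

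The overall strategy --- compute both triple products as limits of the single diagonal sequence $w_k$ and appeal to uniqueness of weak limits --- is sound in outline, and the symmetric rewriting of $\langle w_k,\phi\rangle$ in three ways is correct. The gap is precisely where you say it is, and you have not closed it. The sentence ``the \emph{a priori} existence of both triple products supplies the missing uniformity'' is not an argument: a Moore--Osgood interchange requires uniform convergence of one of the two one-parameter families, and the mere existence of the two iterated limits does not manufacture that uniformity. Writing ``granting that estimate'' concedes the point. The auxiliary claim that $(f*\theta_k)(g*\theta_k)\to f\cdot g$ in $\mathcal{D}'$ is not ``the same computation as in the symmetry step'': the symmetry step rewrites a fixed $\langle w_k,\phi\rangle$ and says nothing about convergence in $k$. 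In fact that convergence is equivalent to the statement that the embedding $\mathcal{D}'\hookrightarrow\mathcal{G}$ of Theorem~\ref{thm:embedding} respects the distributional product, which is exactly the substantive content underlying the proposition.

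The route the paper sets up (in the paragraph preceding the proposition) is to prove that compatibility once: when $f\cdot g$ exists in $\mathcal{D}'$, the image of $f\cdot g$ in $\mathcal{G}$ equals $\mathbf{f}\cdot\mathbf{g}$, i.e.\ the difference of the two regularization sequences lies in the ideal $\mathcal{N}$. Associativity then follows in one line from associativity of $(\mathcal{G},\cdot)$ together with injectivity of the embedding. Your argument is in effect attempting to reprove that compatibility in disguise, but stops at the hard step. To complete it you must either establish the compatibility directly or produce a genuine uniformity bound for the double sequence $\langle g,(f*\theta_k)(h*\theta_n)\phi\rangle$; the identity $\theta_m*\theta_n=\theta_{\min\{m,n\}}$ you cite is relevant input but does not by itself yield either.
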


\section{Dimensions of measures and sets}\label{sec:dimension}
In this section, we recall some basic notions and properties of dimensions of measures and sets.

\subsection{Dimensions of sets}

In general, the Hausdorff dimension is well defined in any metric space. For convenience, we only state the definition of Hausdorff dimension in the space of $p$-adic numbers as follows. The \textit{Hausdorff dimension} of a Borel subset $\Omega\subset \Qp^d$ is defined by
$$
\dim_{\text{H}}\Omega=\inf\{\alpha: \sum_{k=1}^{\infty} \text{diam}(A_k)^\alpha=0,~\Omega\subset \cup_k A_k  \},
$$
where $\text{diam}(A)$ denotes the diameter of the set $A$.

\subsection{Dimensions of measures}
Now we recall several notions of dimensions of measures. Let $\mu \in \mathcal{P}(\Qp^d)$ where $\mathcal{P}(\Qp^d)$ denotes the space of probability measures in $\Qp^d$. Let $\mathcal{A}$ be a partition of $\Qp^d$. The \textit{Shannon entropy of $\mu$ with respect to $\mathcal{A}$} is defined by
$$
H(\mu,\mathcal{A})=\sum_{A\in \mathcal{A}}-\mu(A)\log \mu(A).
$$
By convention the logarithm is taken in base $p$ and $0\log 0=0$. If the partition $\mathcal{A}$ is infinite, then the entropy $H(\mu, \mathcal{A})$ may be infinite. We consider a special family of partitions, where the $n$-th $p$-adic partition of $\Qp^d$ is defined by
$$
\mathcal{D}_n:=\left\{ z+p^n\Zp: z\in \Z/p^{n}\Z  \right\},
$$
which consists of compact open balls of radius $p^n$.  The \textit{entropy dimension} of $\mu$ is defined by
$$
\dim_e \mu=\lim\limits_{n\to \infty} \frac{1}{n} H(\mu, \mathcal{D}_n),
$$
if the limit exists (otherwise we take limsup or liminf as appropriate, denoted by $\overline{\dim}_e \mu$ and $\underline{\dim}_e \mu$).

The \textit{lower local dimension} of a measure $\mu$ at $x$ is defined by the formula
$$
\underline{d}(\mu, x)=\liminf\limits_{r\to 0} \frac{\log \mu(B(x,r))}{\log r}.
$$
Similarly, the \textit{upper local dimension} is defined by
$$
\overline{d}(\mu, x)=\limsup\limits_{r\to 0} \frac{\log \mu(B(x,r))}{\log r}.
$$
The \textit{lower Hausdorff dimension} of $\mu$ is defined by
$$
\dimLow_H \mu=\inf\{\dim_H A: \mu(A)>0 \},
$$
and the \textit{upper Hausdorff dimension} of $\mu$ is 
$$
\overline{\dim}_H \mu=\inf\{\dim_H A: \mu(A)=1 \}.
$$
It is well known that
\begin{equation}\label{eq:dimension fact}
\dimLow_H \mu=\text{essinf}_{x\thicksim\mu} {\underline{d}(\mu,x)}~\text{and}~\overline{\dim}_H \mu=\text{esssup}_{x\thicksim\mu} {\underline{d}(\mu,x)}.
\end{equation}
We denote the common value by $\dim_H \mu$ if $\dimLow_H \mu=\overline{\dim}_H \mu$.

The following proposition is well known in the setting $\R^d$. The proof in the setting $\Qp^d$ is similar with $\R^d$. Thus we omit the proof and leave the readers to work out the detail.
\begin{prop}\label{prop:local dimension}
	Let $\mu\in \mathcal{P}(\Qp^d)$ and $\Omega\subset \Qp^d$ a set with $\mu(\Omega)>0$. Then we have the following properties.
	\begin{itemize}
		\item [(1) ] (Mass distribution principle) If $\underline{d}(\mu,x)\ge \alpha$ for all $x\in \Omega$, then $\dim_{\text{H}}\Omega\ge \alpha$. 
		\item [(2) ] (Billingsley's lemma) If $\underline{d}(\mu,x)\le \alpha$ for all $x\in \Omega$, then $\dim_{\text{H}}\Omega\le \alpha$.
	\end{itemize}
\end{prop}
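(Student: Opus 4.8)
\emph{Proof proposal.} This is the $p$-adic version of the classical mass distribution principle and Billingsley lemma, and the plan is to run the usual arguments while exploiting the ultrametric structure of $\Qp^d$: any two balls are disjoint or nested, every ball is clopen, and every set lies inside a ball of the same diameter, so no Vitali- or Besicovitch-type covering lemma is needed. As a preliminary normalization I would note that, since $r\mapsto\mu(B(x,r))$ is constant on each half-open interval $[p^{-m},p^{-m+1})$ and $\log\mu(B(x,r))/\log r$ is monotone there (because $\log\mu(B(x,r))\le 0$), one has $\underline{d}(\mu,x)=\liminf_{m\to\infty}\log\mu(B(x,p^{-m}))/(-m)$; thus it suffices to work with balls of radius $p^{-m}$, $m\to\infty$, keeping in mind that $\log\mu(B(x,p^{-m}))\le 0$ reverses inequalities upon division by $-m$.

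For part (1), I would fix $\beta<\alpha$. For each $x\in\Omega$ the hypothesis $\underline{d}(\mu,x)\ge\alpha>\beta$ provides an integer $M(x)$ with $\mu(B(x,p^{-m}))\le p^{-m\beta}=\text{diam}(B(x,p^{-m}))^{\beta}$ for all $m\ge M(x)$. Since $\Omega=\bigcup_{j}\{x\in\Omega:M(x)\le j\}$ is an increasing union and $\mu(\Omega)>0$, there is a $j_0$ with $\Omega':=\{x\in\Omega:M(x)\le j_0\}$ of positive measure. Given any countable cover $\{A_i\}$ of $\Omega'$ by sets of diameter $\le p^{-j_0}$, I would discard those disjoint from $\Omega'$, enclose each remaining $A_i$ in the ball $B(y_i,\text{diam}(A_i))$ about some $y_i\in A_i\cap\Omega'$ (of the same diameter $p^{-m_i}$, by ultrametricity), and observe $m_i\ge j_0\ge M(y_i)$, so that $\mu(A_i)\le\text{diam}(A_i)^{\beta}$. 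Summing, $\sum_i\text{diam}(A_i)^{\beta}\ge\sum_i\mu(A_i)\ge\mu(\Omega')>0$, hence the $\beta$-dimensional Hausdorff measure of $\Omega'$ is positive and $\dim_{\text{H}}\Omega\ge\dim_{\text{H}}\Omega'\ge\beta$. Letting $\beta\uparrow\alpha$ gives $\dim_{\text{H}}\Omega\ge\alpha$.

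For part (2), I would fix $\varepsilon>0$ and $\delta>0$. For each $x\in\Omega$ the hypothesis $\underline{d}(\mu,x)\le\alpha<\alpha+\varepsilon$ gives, for arbitrarily large $m$, the bound $\mu(B(x,p^{-m}))>\text{diam}(B(x,p^{-m}))^{\alpha+\varepsilon}$; I would pick one such $m=m(x)$ with $p^{-m(x)}<\delta$. Since the radii $p^{-m(x)}$ are bounded by $\delta$, every ball in $\{B(x,p^{-m(x)})\}_{x\in\Omega}$ lies in a maximal one, and the maximal balls $\{B_i\}$ form a pairwise disjoint family (countable, being disjoint nonempty open sets in the separable space $\Qp^d$) that still covers $\Omega$, each $B_i$ being one of the $B(x,p^{-m(x)})$ and hence satisfying the displayed bound. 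Therefore $\sum_i\text{diam}(B_i)^{\alpha+\varepsilon}<\sum_i\mu(B_i)=\mu\big(\bigcup_i B_i\big)\le 1$, so for every $\delta>0$ there is a $\delta$-cover of $\Omega$ whose $(\alpha+\varepsilon)$-sum is at most $1$; consequently the $(\alpha+\varepsilon)$-dimensional Hausdorff measure of $\Omega$ is finite and $\dim_{\text{H}}\Omega\le\alpha+\varepsilon$. Letting $\varepsilon\downarrow 0$ gives $\dim_{\text{H}}\Omega\le\alpha$.

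I do not expect a genuine obstacle here. The covering step that is the technical heart of the $\R^d$ argument is essentially automatic in $\Qp^d$ because balls are nested-or-disjoint and clopen. The only points needing care are the sign bookkeeping with $\log r<0$ (so that $\underline{d}(\mu,x)\ge\alpha$ turns into an upper bound on $\mu$ of small balls while $\underline{d}(\mu,x)\le\alpha$ turns into a lower bound), and the uniformization in part (1) that replaces $\Omega$ by a positive-measure subset on which the scale threshold $M(x)$ is bounded by a single $j_0$.
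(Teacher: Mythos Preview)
The paper does not actually prove this proposition; it simply remarks that the result is well known in $\R^d$, that the $\Qp^d$ argument is analogous, and leaves the details to the reader. Your proof is correct and supplies precisely those details, with the expected simplification that in an ultrametric space the covering step in part~(2) becomes automatic (maximal balls in a radius-bounded family exist and are pairwise disjoint), so no Vitali- or Besicovitch-type lemma is needed.
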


A direct consequence of Proposition \ref{prop:local dimension} is that if $\underline{d}(\mu, x)=\alpha$ for all $x\in \Omega$ then $\dim_{\text{H}}\Omega=\alpha$. In fact, $\underline{d}(\mu, x)=\alpha$ means that the decay of $\mu$-mass of balls centered at $x$ scales no slower than $r^\alpha$, in other words, for every $\epsilon>0$, we have $\mu(B(x,r))\le r^{\alpha-\epsilon}$ for all small enough $r$ and that such $\alpha$ is the largest number satisfying this property.

\subsection{Beurling dimension of countable sets}\label{Sec:Beurling dimension of countable sets}
Let $\Lambda$ be a countable set in $\Qp^d$. For $r>0$, the \textit{upper Beurling density corresponding to $r$} (or \textit{$r$-Beurling density}) is defined by the formula
$$
\mathfrak{D}_r^+:=\limsup_{h\to \infty} \sup_{x\in \Qp^d} \frac{\sharp(\lambda \cap B(x,h))}{h^r}.
$$
Similarly, the \textit{lower Beurling density corresponding to $r$} is defined by
$$
\mathfrak{D}_r^-:=\liminf_{h\to \infty} \inf_{x\in \Qp^d} \frac{\sharp(\lambda \cap B(x,h))}{h^r}.
$$

The \textit{(upper) Beurling dimension} is defined by
$$
\dim_B\Lambda=\sup\{r>0:\mathfrak{D}_r^+(\Lambda)>0 \},
$$
or alternatively,
$$
\dim_B\Lambda=\inf\{r>0:\mathfrak{D}_r^+(\Lambda)<+\infty \}.
$$

\section{Finite $p$-homogeneous set in $\Qp$}\label{sec:tree structure}
In \cite{FFS}, Fan et al. defined $p$-homogeneous sets and $p$-homogeneous trees in finite groups $\Z/p^{\gamma}\Z$. Inspired by these concepts, we introduce the notion of finite $p$-homogeneous sets in $\Qp$. We first recall the definitions of $p$-homogeneous sets and $p$-homogeneous trees in $\Z/p^{\gamma}\Z$.

Let $\gamma $ be a positive integer. 
%To any $t_0t_1\cdots t_{\gamma -1}\in \{0, 1, 2,  \cdots, p-1\}^\gamma $,  we associate an integer
%$$
%c =c(t_0t_1\cdots t_{\gamma -1})=\sum_{i=0}^{\gamma -1} t_i p^i \in \{0, 1, 2, \cdots, p^\gamma -1\}.
%$$
We identify  $ \mathbb{Z}/p^\gamma\mathbb{Z}\simeq\{0, 1, \cdots, p^\gamma-1\}$  with $\{0, 1, 2,  \cdots, p-1\}^\gamma$ which is considered as a finite tree, denoted by $\mathcal{T}^{(\gamma)}$. In fact, the vertices of $\mathcal{T}^{(\gamma)}$ are the sets $\mathbb{Z}/p^n\mathbb{Z}, 0\leq n\leq \gamma$, translations of them and emptyset $\emptyset$ which is regarded as the root of the tree. In other words, each  vertex, except the root of the tree,  is identified with a sequence $t_0t_1\cdots t_{n-1}$ with $1\leq n\leq\gamma$ and $t_i\in \{0,1, \cdots, p-1\}$. Here, we identify $t_0t_1\cdots t_{n-1}$ with $\sum_{i=0}^{n -1} t_i p^i+\Z/p^{n}\Z$, the translation of the set $\Z/p^n\Z$ by $\sum_{i=0}^{n -1} t_i p^i$.
The set of edges consists of pairs $(x,y)\in \mathbb{Z}/p^n\mathbb{Z}\times \mathbb{Z}/p^{n+1}\mathbb{Z}$, such that $x\equiv y ~(\!\!\!\!\mod p^n)$, where $0\leq n\leq \gamma-1$. Moreover, each
point $c$ of $\mathbb{Z}/p^\gamma\mathbb{Z}$ is identified with $\sum_{i=0}^{\gamma -1} t_i p^i \in \{0, 1, \cdots, p^\gamma-1\}$, which is called {\em a boundary point} of the tree. See Figure \ref{Fig:tree}.
Thus each subset $C\subset \mathbb{Z}/p^\gamma\mathbb{Z}$ will determine a subtree of $\mathcal{T}^{(\gamma)}$, denoted by $\mathcal{T}_{C}$, which consists  of 
the paths from the root  to the boundary points in $C$.

\begin{figure}[h!]
	\centering
	\includegraphics[width=0.8\textwidth]{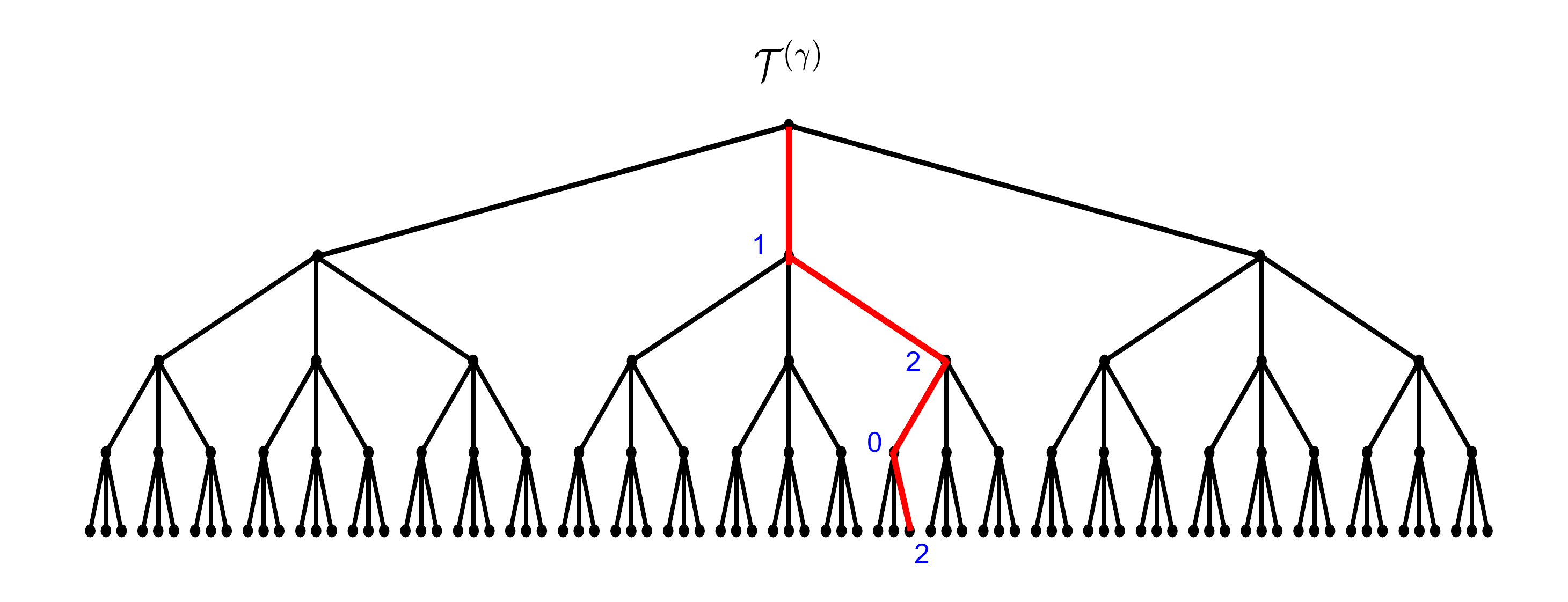}\\
	\caption{$p=3, \gamma=4, c=1\cdot 3^0+2\cdot 3^1+0\cdot 3^2+2\cdot 3^3$.} 
	\label{Fig:tree}
\end{figure}

Now we are going to construct a special class of subtrees of $\mathcal{T}^{(\gamma)}$.
Let $I$ and $J$ form a
partition of $\{0, 1, 2, \cdots, \gamma-1\}$.
It is allowed that either $I$ or $J$ is empty.
% We define a subtree $\mathcal{T}_{I, J}$ of $\mathcal{T}^{(\gamma)}$ as follows. 
We say a subtree of $\mathcal{T}^{(\gamma)}$ is of  {\em $\mathcal{T}_{I, J}$-form} if its boundary points
$t_0t_1\cdots t_{\gamma-1}$ of  $\mathcal{T}_{I, J}$ are those of $\mathcal{T}^{(\gamma)}$ satisfying the following conditions:\\
\indent (i) \ \  if $i \in I$, $t_i$ can take any value of $\{0, 1, \cdots, p-1\}$;\\
\indent (ii) \ if $i\in J$, for any $t_0t_1\cdots t_{i-1}$, we fix one value of $\{0, 1, \cdots, p-1\}$ which is the only value taken by $t_i$. In other words, $t_i$ takes only one value from $\{0, 1, \cdots, p-1\}$ which depends on $t_0t_1\cdots t_{i-1}$.

Remark that such a subtree depends not only on $I$ and $J$ but also on the values taken
by $t_i$'s with $i\in J$. 
%For simplicity, this dependance is not reflected in the
%notation $\mathcal{T}_{I, J}$.
A $\mathcal{T}_{I, J}$-form tree  is  called a  finite $p$-{\em homogeneous tree}. A set $C\subset \Z/p^{\gamma}\Z$ is said to be {\em $p$-homogeneous} if the corresponding tree $\mathcal{T}_{C}$ is  $p$-homogeneous. See examples in Figure \ref{Fig:homogeneous tree}.

\begin{figure}[h!]
	\centering
	\includegraphics[width=0.8\textwidth]{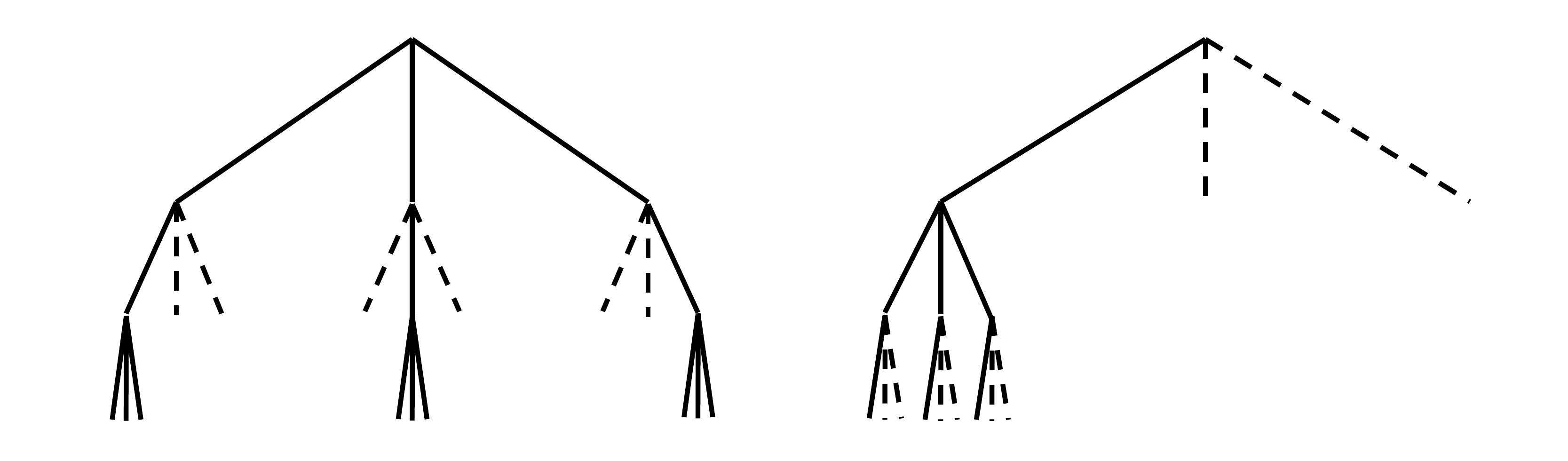}\\
	\caption{$p$-homogeneous trees $\mathcal{T}_C$ and $\mathcal{T}_D$ in $\Z/3^3\Z$, where $C=\{0,4,8,9,13,17,18,22,26 \}$ and $D=\{0,3,6\}$ are contained in $\Z/3^3\Z$. Moreover, $\mathcal{T}_C$ is of $\mathcal{T}_{I,J}$ form and $\mathcal{T}_D$ is of $\mathcal{T}_{J,I}$ where $I=\{0,2\}$ and $J=\{1\}$.} 
	\label{Fig:homogeneous tree}
\end{figure}

For $C\subset \Z_p$, denote   by
$$C_{\!\!\!\!\!\! \mod{p^\gamma}} :=\{x\in \{0,1,\dots,p^\gamma-1 \}: \exists~ y \in C, \text{ such that }  x\equiv y ~(\!\!\!\!\!\! \mod{p^\gamma}) \}$$  the multi-subset of  $\Z/p^\gamma\Z$ determined by $C$ modulo $p^\gamma$. 
Now we introduce the notion of finite $p$-homogeneous sets in $\Qp$: a finite set $C$ is said to be {\em $p$-homogeneous} if there exist $n,\gamma\in \Z$ such that
$
(p^nC)_{\!\!\!\! \mod{p^\gamma}}
$
is $p$-homogeneous in $\Z/p^{\gamma}\Z$. Moreover, we say that a finite $p$-homogeneous set $C$ has the \textit{$(n, \gamma, I, J)$-tree structure} if the corresponding tree $\mathcal{T}_{(p^nC)_{\!\!\!\! \mod{p^\gamma}}}$ is a $\mathcal{T}_{I,J}$-form tree. It is easy to see that if $(p^nC)_{\!\!\!\! \mod{p^\gamma}}$ is $p$-homogeneous in $\Z/p^{\gamma}\Z$, then $(p^{n+k}C)_{\!\!\!\! \mod{p^{\gamma+k}}}$ is $p$-homogeneous in $\Z/p^{\gamma+k}\Z$ for any $k\ge 0$. Thus if the set $C$ has $(n, \gamma, I, J)$-tree structure, then it also has $(n+k, \gamma+k, I+k, (J+k)\cup \{0, 1, \dots, k-1 \})$-tree structure for any $k\ge 1$, where $I+k=\{i+k:i\in I  \}$ is the translation of $I$ by $k$. See Figure \ref{Fig:p-homo set} for an example. We can understand the parameters $n, \gamma, I$ and $J$ in the following way:
\begin{itemize}
	\item ``$n$": the position in $\Qp$ where we look at;
	\item ``$\gamma$": the mass of information that we deal with;
	\item ``$I$" and ``$J$": the form of the set, i.e. how the set looks like. Moreover, the larger $\sharp I$ is, the large the set is.
\end{itemize}

\begin{figure}[h!]
	\centering
	\includegraphics[width=0.8\textwidth]{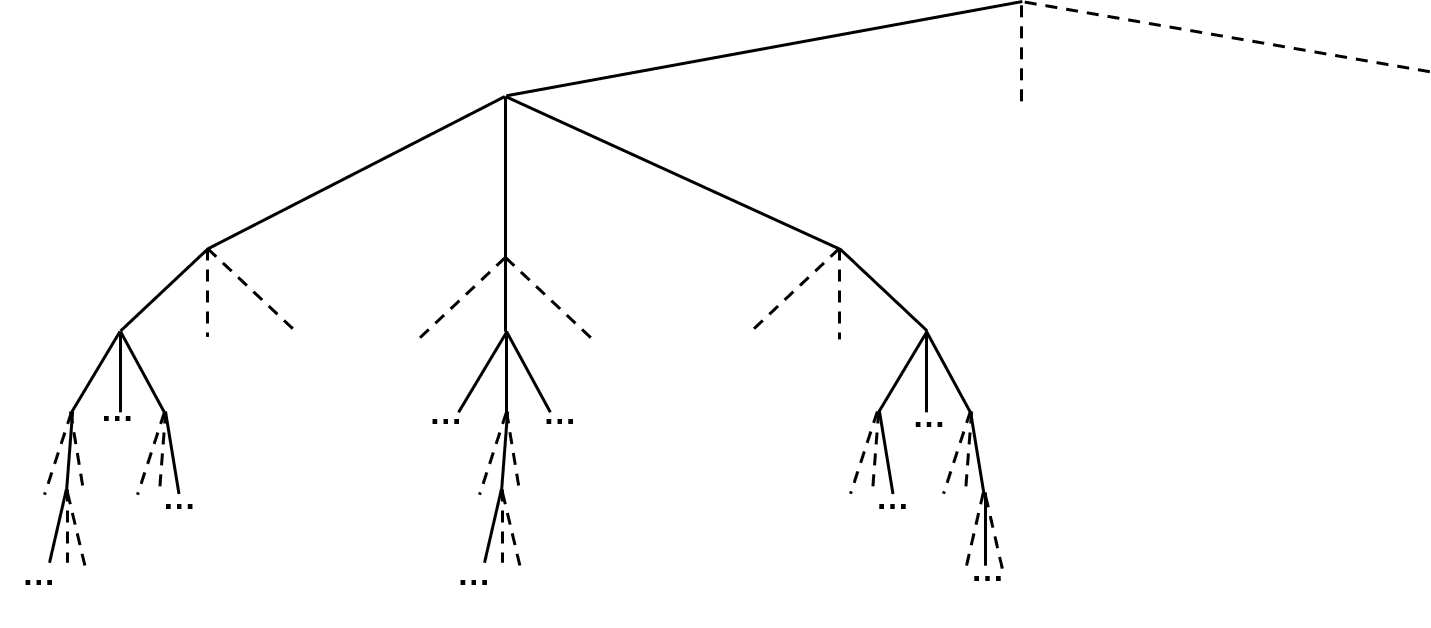}\\
	\caption{ A $p$-homogeneous set with $(n, \gamma, I, J)$-tree structure, where $I=\{k,2+k\}$ and $J=\{0, 1, 2, \dots, k-2, k-1, k+1 \}$ for some $k\ge 0$ (dependent on the choice of $n,\gamma$).} 
	\label{Fig:p-homo set}
\end{figure}

\begin{lem}\label{lem:p homo spectral}
	Let $C$ be the $p$-homogeneous set having $(n, \gamma, I, J)$-tree structure. Let $D$ the $p$-homogeneous set having $(\gamma-n, \gamma, \gamma-I, \gamma-J)$-tree structure. Then the pair $(C,D)$ is a spectral pair.
\end{lem}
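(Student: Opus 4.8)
The plan is to realize the spectral pair $(C,D)$ explicitly, exhibiting $D$ (viewed as a set of characters of $\Qp$ via the identification $\widehat{\Qp}\simeq\Qp$) as an orthonormal basis of $L^2(\mu_C)$, where $\mu_C$ is the normalized counting measure on $C$. Since $C$ is finite with $\sharp C = p^{\sharp I}$ and, by a straightforward dimension count, $\sharp D = p^{\sharp(\gamma - I)} = p^{\gamma - \sharp I}$... wait, I should be careful: the tree structure $(\gamma-n,\gamma,\gamma-I,\gamma-J)$ has ``free'' coordinates indexed by $\gamma - I = \{\gamma - i : i \in I\}$, so $\sharp D = p^{\sharp I} = \sharp C$. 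Hence it suffices to prove mutual orthogonality of the characters $\{\chi_d : d \in D\}$ in $L^2(\mu_C)$, i.e. that for distinct $d, d' \in D$ one has $\sum_{c \in C} \chi_d(c)\overline{\chi_{d'}(c)} = 0$, equivalently $\sum_{c\in C}\chi(( d - d')c) = 0$; by a counting argument an orthonormal set of the right cardinality is automatically a basis.

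First I would reduce to the finite-group setting. Using the $(n,\gamma,I,J)$-tree structure of $C$ and the translation invariance of the claim under $c \mapsto p^k c$, $d \mapsto p^k d$, I can assume $n = 0$ and that $C \subset \{0,1,\dots,p^\gamma-1\}$ is $p$-homogeneous in $\Z/p^\gamma\Z$ with $\mathcal{T}_{I,J}$-form, and correspondingly $D \subset \{0,1,\dots,p^\gamma-1\}$ has $\mathcal{T}_{\gamma-I,\gamma-J}$-form (here one must check that the normalization of Lemma \ref{lem:p homo spectral} is consistent with $n=0$; the identity $\int_{p^{-m}\Zp}\chi(x)\,dx = 0$ and formula \eqref{one-in-unit-ball} will be the only analytic inputs, reducing everything to a character sum over $\Z/p^\gamma\Z$). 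The pairing $\chi((d-d')c)$ then depends only on $c \bmod p^\gamma$ and $d-d' \bmod p^\gamma$, via $\chi(x) = e^{2\pi i\{x\}}$; concretely, writing $c = \sum_{i=0}^{\gamma-1} c_i p^i$ and $d = \sum_{j=0}^{\gamma-1} d_j p^j$ one has $\chi(cd/p^\gamma)$ with an exponent controlled by the ``complementary'' digit indices, which is exactly the mechanism matching the index set $I$ of $C$ against the index set $\gamma - I$ of $D$.

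The heart of the argument, and the step I expect to be the main obstacle, is the orthogonality computation itself: given $d \neq d'$ in $D$, both of $\mathcal{T}_{\gamma-I,\gamma-J}$-form, one must locate a digit position $j$ at which the relevant partial sums of $d$ and $d'$ first differ, show that $j$ can be taken in $\gamma - I$ (using that along positions in $\gamma - J$ the digits of any element of $D$ are forced functions of the lower digits, so a discrepancy there would propagate from a lower discrepancy), and then factor the sum $\sum_{c \in C} \chi((d-d')c)$ as a product over the digit positions of $C$. Because $C$ is $p$-homogeneous, the factor corresponding to the ``free'' position $i = \gamma - j \in I$ is a complete geometric sum $\sum_{t=0}^{p-1} e^{2\pi i t m/p}$ with $p \nmid m$, which vanishes; the remaining factors are bounded, so the whole sum is $0$. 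I would organize this via induction on $\gamma$, peeling off the top digit: either the top digit is a free coordinate for $C$ (position $\gamma - 1 \in I$), in which case if $d, d'$ differ in the conjugate bottom coordinate of $D$ the top-digit sum already vanishes, and otherwise $C$ splits into $p$ translated copies of a $p$-homogeneous set in $\Z/p^{\gamma-1}\Z$ on which the inductive hypothesis applies; or the top digit is forced for $C$ (position $\gamma-1 \in J$), in which case $C$ is a single translate of such a smaller set and $d,d'$ necessarily already differ in a coordinate below, again invoking induction. Finally, orthonormality plus $\sharp D = \sharp C = \dim L^2(\mu_C)$ gives that $\{\chi_d\}_{d\in D}$ is an orthonormal basis, so $(C,D)$ is a spectral pair.
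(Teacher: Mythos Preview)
Your approach is correct and coincides with the paper's one-line proof, which simply asserts that the matrix $(\chi(cd))_{c\in C,\,d\in D}$ is a complex Hadamard matrix; you have supplied the details the paper omits. Two minor slips worth fixing: the reduction should scale $c\mapsto p^{k}c$ and $d\mapsto p^{-k}d$ (not both by $p^{k}$) so that the product $cd$ and hence $\chi(cd)$ is preserved; and the character sum over $C$ does not literally factor as a product over all digit positions (the forced digits at positions in $J$ depend on lower digits), but your induction---grouping by the digits below the critical free position and summing the complete geometric series there---is exactly the correct way to handle this and gives the vanishing.
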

\begin{proof}
	It is not hard to check that the matrix
	$$
	H:=\left( \chi(cd) \right)_{c\in C, d\in D}
	$$
	is a complex Hadamard matrix, that is to say, $HH^\dagger=(\sharp C) I$, where $H^\dagger$ denotes the Hermitian transpose of $H$ and $I$ is the identity matrix.
\end{proof}
Finally, considering the tree model of $p$-adic field, we remark that the local geometrical structure of the $p$-homogeneous sets $C$ and $D$ in Lemma \ref{lem:p homo spectral} is like in Figure \ref{Fig:homogeneous tree}.

\section{Zeros of distributions}\label{sec:zeros}
In this section, we investigate the relation between the zeros of two distributions and the ones of their product. We say that a distribution $f\in \mathcal{D}'$ is \textit{non-negative} if for any non-negative test function $\phi\in \mathcal{D}$, we have $\langle f, \phi\rangle\ge 0$.

We show in the following lemma that a non-negative distribution has a local inverse generalized function in its support.
\begin{lem}\label{lem:local inverse}
	Let $f\in \mathcal{D}'$ be a non-negative distribution. Suppose that the support of $f$ contains a ball $B(x,p^{-n})$ for some $x\in \Q_p$ and some integer $n\ge 0$, namely,
	$$
	B(x,p^{-n}) \subset \text{supp} (f).
	$$
	Then there exists a generalized function $\mathbf{g}\in \mathcal{G}$ such that $\mathbf{g} \cdot f =1_{B(x,p^{-n})}$.
\end{lem}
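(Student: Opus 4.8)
The plan is to produce $\mathbf g$ explicitly, as the class of the sequence obtained by taking pointwise reciprocals of the regularizations of $f$, cut off to the ball $B(x,p^{-n})$. Write $f_k:=\Delta_k\cdot(f*\theta_k)\in\mathcal{D}$ for the regularizing test functions of $f$, so that by Theorem \ref{thm:embedding} the distribution $f$ is identified in $\mathcal{G}$ with $[f_k]$. The first step is to record the pointwise formula
$$
f_k(y)=1_{B(0,p^k)}(y)\cdot p^{k}\,\langle f,\,1_{B(y,p^{-k})}\rangle,
$$
which follows at once from $\theta_k=p^k\,1_{B(0,p^{-k})}$ and the identity $1_{B(0,p^{-k})}(y-\cdot)=1_{B(y,p^{-k})}(\cdot)$ in the definition of $f*\theta_k$. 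In particular, since $f$ is non-negative, every $f_k$ is a non-negative test function that is constant on each ball of radius $p^{-k}$.

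The key step, which I expect to be the main obstacle, is a local positivity statement: for every integer $k\ge n$ and every $y\in B(x,p^{-n})$ one has $\langle f,1_{B(y,p^{-k})}\rangle>0$. I would prove this by contradiction. If $\langle f,1_{B(y,p^{-k})}\rangle=0$ for some such $y$ and $k$, then for any ball $B(y',p^{m})\subseteq B(y,p^{-k})$ the function $1_{B(y,p^{-k})}-1_{B(y',p^{m})}$ is a non-negative test function, so $0\le\langle f,1_{B(y',p^{m})}\rangle\le\langle f,1_{B(y,p^{-k})}\rangle=0$; hence $f$ annihilates the indicator of every sub-ball of $B(y,p^{-k})$, which means $B(y,p^{-k})\subseteq\mathcal{Z}_f$. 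But $k\ge n$ and $y\in B(x,p^{-n})$ force $B(y,p^{-k})\subseteq B(y,p^{-n})=B(x,p^{-n})\subseteq\operatorname{supp}(f)=\Q_p\setminus\mathcal{Z}_f$, a contradiction. Granting this, I fix an integer $k_0\ge n$ large enough that $B(x,p^{-n})\subseteq B(0,p^{k_0})$; then for every $k\ge k_0$ the test function $f_k$ is strictly positive on $B(x,p^{-n})$, where, being uniformly locally constant on a compact set, it takes only finitely many (positive) values.

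With this in hand I would set $g_k:=\frac{1}{f_k}\,1_{B(x,p^{-n})}$ for $k\ge k_0$ and $g_k:=0$ for $k<k_0$. For $k\ge k_0$ the function $g_k$ is, on $B(x,p^{-n})$, a finite linear combination of indicators of the radius-$p^{-k}$ balls on which $f_k$ is constant and positive, and it is supported in $B(x,p^{-n})$; hence $g_k\in\mathcal{D}$ and $\mathbf g:=[g_k]\in\mathcal{G}$. Now $g_kf_k=1_{B(x,p^{-n})}$ pointwise on $\Q_p$ for every $k\ge k_0$ and $g_kf_k=0$ for $k<k_0$, so the sequence $\{g_kf_k\}$ agrees with the constant sequence $\{1_{B(x,p^{-n})}\}$ for all large $k$, whence $[g_kf_k]=[1_{B(x,p^{-n})}]$ in $\mathcal{G}$; since a direct check gives $\Delta_k(\psi*\theta_k)=\psi$ for any $\psi\in\mathcal{D}$ once $k\ge\max\{-\ell(\psi),\ell'(\psi)\}$, this constant sequence is exactly the image of the test function $1_{B(x,p^{-n})}$ under the embedding $\mathcal{D}'\hookrightarrow\mathcal{G}$. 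Therefore $\mathbf g\cdot f=[g_k]\cdot[f_k]=[g_kf_k]=1_{B(x,p^{-n})}$ in $\mathcal{G}$, as required. Beyond the positivity step, everything is formal bookkeeping inside the Colombeau algebra; the only extra care needed is the routine verification that $1/f_k$ restricted to $B(x,p^{-n})$ is a genuine test function, which is exactly what uniform local constancy of $f_k$ together with compactness of $B(x,p^{-n})$ provides.
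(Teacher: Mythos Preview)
Your proof is correct and follows essentially the same route as the paper: establish the local positivity $\langle f,1_{B(y,p^{-k})}\rangle>0$ for sub-balls of $B(x,p^{-n})$, then take pointwise reciprocals of the regularizations $f_k=\Delta_k\cdot(f*\theta_k)$ cut off to $B(x,p^{-n})$ and check that the resulting product in $\mathcal{G}$ is the embedded image of $1_{B(x,p^{-n})}$. Your monotonicity argument for the positivity step is in fact a bit cleaner than the paper's (which routes through Proposition~\ref{limit}), and your explicit choice of $k_0$ large enough that $B(x,p^{-n})\subset B(0,p^{k_0})$ is a sensible precaution that the paper glosses over.
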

\begin{proof}
	We claim that for any $y\in B(x,p^{-n})$ and $m\ge n$, we have $\langle f, 1_{B(y,p^{-m})} \rangle>0$. We prove this claim by contradiction. Assume that there is some ball $B(y,p^{-m})\subset B(x, p^{-n})$
	such that $\langle f, 1_{B(y,p^{-m})} \rangle=0$. It follows that
	\begin{equation}\label{eq:local inverse 1}
	\langle \Delta_k \cdot(f*\theta_k), 1_{B(y,p^{-m})} \rangle=\langle f, 1_{B(y,p^{-m})} \rangle=0,~\forall~k\ge m.
	\end{equation}
	The first equality above is due to Proposition \ref{limit}. On the other hand, for $k\ge m$, we have
	\begin{equation}\label{eq:local inverse 2}
	\langle \Delta_k \cdot(f*\theta_k), 1_{B(y,p^{-m})} \rangle=\sum_{i\in I_k} \langle \Delta_k \cdot(f*\theta_k), 1_{B(y_i,p^{-k})} \rangle,
	\end{equation}
	where $I_k$ is a finite set such that $\{B(y_i, p^{-k}): i\in I_k \}$ is a partition of $B(y,p^{-m})$. Since $f$ is non-negative, by \eqref{eq:local inverse 1}, \eqref{eq:local inverse 2} and Proposition \ref{limit}, we have
	\begin{equation}
	\langle \Delta_k \cdot(f*\theta_k), 1_{B(y_i,p^{-k})} \rangle=0,~\forall~k\ge m~\forall~i\in I_k.
	\end{equation}
	Observe that for any $z\in B(y,p^{-m})$, the ball $B(z,p^{-k})$ coincides one of $B(y_i, p^{-k})$ for every $k\ge m$.
	Thus for any $z\in B(y,p^{-m})$, we have
	$$
	\langle f, 1_{B(z,p^{-k})} \rangle= \langle \Delta_k \cdot(f*\theta_k), 1_{B(z,p^{-k})}\rangle=0, \forall k\ge m.
	$$
	This implies that $y\in \mathcal{Z}_{f}$ which is contradict to the hypothesis. Thus we prove the claim.
	
	Since $\Delta_k \cdot(f*\theta_k)\in \mathcal{D}$, by the above claim, we have $(\Delta_k \cdot(f*\theta_k))(\xi)\not=0$ for all $\xi\in B(x,p^{-n})$ and all $k\ge n$. Now define 
	\begin{equation*}
	g_k(\xi)=
	\begin{cases}
	((\Delta_k \cdot(f*\theta_k))(\xi))^{-1},&~\text{if}~\xi\in B(x,p^{-n}),\\	
	0,&~\text{otherwise},
	\end{cases}
	\end{equation*}
	for $k\ge n$ and $g_k=0$ for $1\le k<n$. Since $\Delta_k \cdot(f*\theta_k)\in \mathcal{D}$, we have $g_k\in \mathcal{D}$ for all $k\ge 1$. Let $\mathbf{g}=[g_k] \in \mathcal{G}$. A simple computation shows that
	$$
	g_k \cdot (\Delta_k \cdot(f*\theta_k)) =
	\begin{cases}
	1_{B(x,p^{-n})},&~\text{if}~k\ge n,\\
	0,&~\text{if}~1\le k\le n-1.
	\end{cases}
	$$
	It follows that
	\begin{equation*}
	\mathbf{g}\cdot  f=[ g_k \cdot(\Delta_k \cdot(f*\theta_k)) ]=[\Delta_k \cdot(1_{B(x,p^{-n})}*\theta_k)].
	\end{equation*}
	By Theorem \ref{thm:embedding}, we conclude that $\mathbf{g}\cdot f=1_{B(x,p^{-n})}$.
\end{proof}

The following proposition tells us that the union of the set of zeros of two  distributions contains the set of zeros of their product whenever at least one of them is non-negative. We remark that if neither of them is non-negative, then such relation might not hold in general.
\begin{prop}\label{prop:union of zeros}
	Let $f,g\in \mathcal{D}'$. Suppose $f$ is non-negative. If the product $f\cdot g$ is well defined and equal to $h$, then we have
	$$
	\mathcal{Z}_{h} \subset \mathcal{Z}_{f} \cup \mathcal{Z}_{g}.
	$$
\end{prop}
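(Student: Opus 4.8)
The plan is to show that if $x \notin \mathcal{Z}_f \cup \mathcal{Z}_g$ leads to a contradiction with $x \in \mathcal{Z}_h$; equivalently, we show directly that $\mathcal{Z}_h \subset \mathcal{Z}_f \cup \mathcal{Z}_g$ by taking a point $x \in \mathcal{Z}_h$ and arguing that if $x \notin \mathcal{Z}_f$, then $x \in \mathcal{Z}_g$. So suppose $x \in \mathcal{Z}_h$ but $x \notin \mathcal{Z}_f = \Q_p \setminus \operatorname{supp}(f)$, i.e. $x \in \operatorname{supp}(f)$. Since $\operatorname{supp}(f)$ is closed and $x$ lies in it, for every integer $n$ the ball $B(x, p^{-n})$ meets $\operatorname{supp}(f)$; but to apply Lemma \ref{lem:local inverse} I need an entire small ball contained in $\operatorname{supp}(f)$. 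Here is where I expect the main subtlety: a single point of the support need not have a whole neighborhood inside the support. However, since $x \in \mathcal{Z}_h$, there is $n_0$ such that $\langle h, 1_{B(y,p^n)}\rangle = 0$ for all $y \in B(x, p^{-n_0})$ and all $n \le -n_0$ — wait, more carefully, for all $n \le n_0$; so $h$ vanishes on the open set $B(x,p^{-n_0})$ (after fixing the sign conventions of the definition of zero). The idea is then to localize: I will work inside the ball $U := B(x, p^{-n_0})$ and study the restriction of $f$ there.

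The key step is to find a ball $B(x', p^{-m}) \subset U$ with $B(x', p^{-m}) \subset \operatorname{supp}(f)$, or else directly conclude $U \cap \operatorname{supp}(f)$ behaves well enough. Actually I think the cleaner route is: since $x \in \operatorname{supp}(f)$, there exists $x' \in B(x,p^{-n_0})$ and an integer $m \ge n_0$ with $\langle f, 1_{B(x',p^{-m})}\rangle \ne 0$ — wait, that is automatic from $x'$ near a support point. Let me instead use non-negativity decisively: because $f$ is non-negative, $\langle f, 1_{B(y,p^{-k})}\rangle \ge 0$ always, and if $\langle f, 1_{B(x',p^{-m})}\rangle > 0$ for some ball $B(x',p^{-m}) \subset U$, then by the monotonicity argument in the proof of Lemma \ref{lem:local inverse} (finer balls partition coarser ones, and a sum of non-negatives is positive only if a summand is), we get a smaller ball $B(x'', p^{-m'})$ on which $f$ is "strictly positive at every scale", i.e. $\langle f, 1_{B(z,p^{-k})}\rangle > 0$ for all $z \in B(x'',p^{-m'})$ and all $k \ge m'$. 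That smaller ball then satisfies $B(x'', p^{-m'}) \subset \operatorname{supp}(f)$. Such a ball exists inside $U$ precisely because $x \in \operatorname{supp}(f) \cap U$ forces $\langle f, 1_{B(x,p^{-N})}\rangle > 0$ for all large $N$ (otherwise $x$ would be a zero of $f$), and these balls are eventually inside $U$.

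Now apply Lemma \ref{lem:local inverse} to get $\mathbf{g}_0 \in \mathcal{G}$ with $\mathbf{g}_0 \cdot f = 1_{B(x'',p^{-m'})}$. Then in the Colombeau algebra, using associativity (Proposition \ref{associative} / the embedding of Theorem \ref{thm:embedding}) and commutativity,
$$
1_{B(x'',p^{-m'})} \cdot g = (\mathbf{g}_0 \cdot f) \cdot g = \mathbf{g}_0 \cdot (f \cdot g) = \mathbf{g}_0 \cdot h.
$$
But $B(x'',p^{-m'}) \subset U = B(x,p^{-n_0})$ and $h$ vanishes on $U$, so $1_{B(x'',p^{-m'})} \cdot h = 0$ as a distribution, hence as a generalized function; and since $\mathbf{g}_0$ is supported (as a sequence) inside $B(x'',p^{-m'})$, we get $\mathbf{g}_0 \cdot h = 0$ in $\mathcal{G}$. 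Therefore $1_{B(x'',p^{-m'})} \cdot g = 0$ in $\mathcal{G}$, which forces $\langle g, \phi \rangle = 0$ for every $\phi \in \mathcal{D}$ supported in $B(x'',p^{-m'})$, i.e. $g$ vanishes on a neighborhood of $x''$. To finish I must get vanishing of $g$ on a neighborhood of $x$, not merely of $x''$: I expect to run this argument for \emph{every} point $x' \in U$ that lies in $\operatorname{supp}(f)$, concluding $g$ vanishes on a neighborhood of each such point; combined with Proposition \ref{zeroproduct} (on $U \setminus \operatorname{supp}(f)$, where $f = 0$ so $h = f \cdot g = 0$ carries no information but $g$ is unconstrained — hmm) this needs care. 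The main obstacle is precisely this last globalization: patching the local vanishing of $g$ across $U$, handling the part of $U$ disjoint from $\operatorname{supp}(f)$, and making sure the conclusion is about $x$ itself. I would resolve it by showing $g$ vanishes on the (open) set $U \cap \operatorname{int}(\operatorname{supp} f)$ and separately noting $\operatorname{supp}(g) \cap U \subset \operatorname{supp}(f)$, then arguing $x \notin \operatorname{supp}(g)$ by a direct neighborhood-of-$x$ version of the above, choosing $x'' $ to be obtained from $x$ itself rather than an arbitrary support point.
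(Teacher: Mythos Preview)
Your overall strategy---invert $f$ locally via Lemma \ref{lem:local inverse} and use associativity in the Colombeau algebra to pass from $g$ to $\mathbf{w}\cdot h$---is exactly the paper's. The difference is in the set-up, and there you take a wrong turn that creates the very patching problem you flag at the end.

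The paper does \emph{not} fix a point $x\in\mathcal{Z}_h\setminus\mathcal{Z}_f$ and then hunt for a sub-ball contained in $\operatorname{supp}(f)$. It takes as its basic object a test function $\phi=1_{B(x,p^{-n})}$ with $\operatorname{supp}(\phi)\subset\mathcal{Z}_h\setminus\mathcal{Z}_f$; such a ball is \emph{automatically} contained in $\operatorname{supp}(f)$ (it is disjoint from $\mathcal{Z}_f$), so Lemma \ref{lem:local inverse} applies directly to this $\phi$ and yields $\mathbf{w}$ with $\mathbf{w}\cdot f=\phi$. The computation is then the short chain
\[
\langle g,\phi\rangle=\langle g,\phi^2\rangle=\langle \phi\, g,\phi\rangle=\langle \mathbf{w}\cdot f\cdot g,\phi\rangle=\langle \mathbf{w}\cdot h,\phi\rangle=0,
\]
the last equality because the components $w_k$ of $\mathbf{w}$ are supported inside $\operatorname{supp}(\phi)\subset\mathcal{Z}_h$, so $\langle h,w_k\varphi\rangle=0$ for every $\varphi$.

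Your route instead tries to manufacture a ball inside $\operatorname{supp}(f)$ from the single hypothesis $x\in\operatorname{supp}(f)$, and the step you give for this is false: from $\langle f,1_{B(x,p^{-N})}\rangle>0$ and non-negativity one obtains only a \emph{nested sequence} of balls with positive mass (pick a positive summand at each refinement), not a ball on which \emph{every} sub-ball has positive mass. Take $f$ to be any Cantor-type probability measure on $\Z_p$ whose support has empty interior; then no ball at all is contained in $\operatorname{supp}(f)$. This is exactly why you are left with an unresolved globalization. The fix is to start, as the paper does, from balls already lying in $\mathcal{Z}_h\setminus\mathcal{Z}_f$ rather than from a point, so that the containment $B\subset\operatorname{supp}(f)$ is built into the choice of $\phi$ and no patching is needed.
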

\begin{proof}
	It is sufficient to show 
	\begin{equation}\label{eq:unions of zeros 0}
	\mathcal{Z}_{h}\setminus \mathcal{Z}_f \subset \mathcal{Z}_g.
	\end{equation}
	This means that we only need to show that for any $\phi\in \mathcal{D}$ with $\text{supp}(\phi)\subset \mathcal{Z}_{h}\setminus \mathcal{Z}_f$, we have $\langle g, \phi\rangle=0$. Now fix $\phi=1_{B(x,p^{-n})}$ with $\text{supp}(\phi)\subset \mathcal{Z}_{h}\setminus \mathcal{Z}_f$. It follows that 
	$$
	\text{supp}(\phi)\subset \text{supp}(f).
	$$
	By Lemma \ref{lem:local inverse}, there exists $\mathbf{w}\in \mathcal{G}$ such that $\mathbf{w} \cdot f=\phi$. By Proposition \ref{prod} and the fact that $\phi^2=\phi$, we have
	\begin{equation}\label{eq:unions of zeros 1}
	\langle g, \phi \rangle=\langle g, \phi^2 \rangle=\langle \phi g, \phi \rangle=\langle \mathbf{w} \cdot f \cdot  g, \phi \rangle=\langle \mathbf{w} \cdot h, \phi \rangle.
	\end{equation}
	By the construction of $\mathbf{w}=[w_k]$ in Lemma \ref{lem:local inverse}, we might assume that the parameter of constancy of $w_k$ is $-k$ and 
	$$
	\text{supp}(w_k)\subset \text{supp}(\phi),~\forall~k\ge 1.
	$$
	For any $\varphi\in \mathcal{D}$ and any $k\ge 1$, since $\text{supp}(w_k\cdot\varphi) \subset \text{supp}(\phi) \subset \mathcal{Z}_h$, by Proposition \ref{prod} and Proposition \ref{limit}, we have
	$$
	\langle w_k\cdot(\Delta_k \cdot(h*\theta_k)), \varphi \rangle=\langle \Delta_k \cdot(h*\theta_k), w_k\cdot\varphi \rangle=\langle h, w_k\cdot\varphi \rangle=0.
	$$
	It follows from Theorem \ref{thm:embedding} that $\mathbf{w}\cdot h=0$. Combining this with \eqref{eq:unions of zeros 1}, we conclude that $\langle g, \phi \rangle=0$ which justifies \eqref{eq:unions of zeros 0}.
\end{proof}
Proposition \ref{prop:union of zeros} provides us a useful tool to investigate the functional equation of distributions with the form $f\cdot g=h$. We will use it later in Section \ref{sec: spectral measures in Qp}.

\section{$p^n$-cycle in $\Qp$} \label{Zmodule}

In this section, we first recall the notion of $p$-cycles in cyclic groups and then introduce the notion of $p^n$-cycles in the field of $p$-adic numbers. It is an important tool to deal with the vanishing sum of continuous group characters.

Let $m\ge 2$ be an integer and let  $\omega_m = e^{2\pi i/m}$, which is a primitive $m$-th root of unity. Denote  by $\mathcal{M}_m$  the set of integral points
$(a_0, a_1, \cdots, a_{m-1}) \in \mathbb{Z}^m$ such that
$$
\sum_{j=0} ^{m-1} a_j \omega_m^j =0.
$$
The set $\mathcal{M}_m$ is clearly a $\mathbb{Z}$-module. Throughout this section, we are concerned with the case where  $m=p^n$ is a power of a prime number. %Let $p$ be a prime and  $n$ be a positive integer.
The structure of   $\mathcal{M}_{p^n}$ is shown in the following lemma.
\begin{lem} [\cite{s}, Theorem 1]\label{SchLemma}
	If $(a_0,a_1,\cdots, a_{p^n-1})\in  \mathcal{M}_{p^n}$,
	then for any integer $0\le i\le p^{n-1}-1$ we have $a_i=a_{i+jp^{n-1}}$ for all $j=0,1,\dots, p-1$.
\end{lem}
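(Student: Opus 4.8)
The statement is that any integral vanishing relation $\sum_{j=0}^{p^n-1} a_j \omega_{p^n}^j = 0$ is forced to be periodic with period $p^{n-1}$ in the index $j$; equivalently, $\mathcal{M}_{p^n}$ is generated as a $\Z$-module by the vectors $e_i + e_{i+p^{n-1}} + \cdots + e_{i + (p-1)p^{n-1}}$ for $0 \le i \le p^{n-1}-1$, which are the coefficient vectors of $\omega_{p^n}^i(1 + \omega_p + \cdots + \omega_p^{p-1}) = \omega_{p^n}^i \cdot \Phi_p(\omega_p) \cdot (\text{stuff})$, i.e. of $\omega_{p^n}^i \cdot \frac{x^{p^n}-1}{x^{p^{n-1}}-1}$ evaluated suitably. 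The plan is to argue via the minimal polynomial of $\omega_{p^n}$ over $\Q$, namely the cyclotomic polynomial $\Phi_{p^n}(x) = 1 + x^{p^{n-1}} + x^{2p^{n-1}} + \cdots + x^{(p-1)p^{n-1}}$, which has degree $\varphi(p^n) = p^n - p^{n-1}$.

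First I would restate the hypothesis: writing $P(x) = \sum_{j=0}^{p^n-1} a_j x^j \in \Z[x]$, the condition $(a_0,\dots,a_{p^n-1}) \in \mathcal{M}_{p^n}$ says exactly that $P(\omega_{p^n}) = 0$, hence that $\Phi_{p^n}(x) \mid P(x)$ in $\Q[x]$, and therefore, since $\Phi_{p^n}$ is monic with integer coefficients, in $\Z[x]$ (by Gauss's lemma / division with remainder). So $P(x) = \Phi_{p^n}(x) Q(x)$ with $Q \in \Z[x]$, and $\deg P \le p^n - 1$ forces $\deg Q \le p^{n-1} - 1$. Writing $Q(x) = \sum_{i=0}^{p^{n-1}-1} b_i x^i$, one then simply multiplies out: since $\Phi_{p^n}(x) = \sum_{k=0}^{p-1} x^{kp^{n-1}}$, one gets
$$
P(x) = \sum_{i=0}^{p^{n-1}-1} \sum_{k=0}^{p-1} b_i\, x^{i + kp^{n-1}},
$$
and because the exponents $i + kp^{n-1}$ for $0 \le i \le p^{n-1}-1$, $0 \le k \le p-1$ are pairwise distinct and exhaust $\{0,1,\dots,p^n-1\}$, comparing coefficients gives $a_{i + kp^{n-1}} = b_i$ independently of $k$. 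This is precisely the claimed periodicity $a_i = a_{i + jp^{n-1}}$ for all $j = 0,\dots,p-1$.

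The only genuine point requiring care — the main obstacle, though a standard one — is the passage from $\Phi_{p^n} \mid P$ in $\Q[x]$ to the same divisibility in $\Z[x]$, together with the explicit identification of $\Phi_{p^n}(x)$ with $\sum_{k=0}^{p-1} x^{kp^{n-1}}$; the latter follows from $\Phi_{p^n}(x) = \Phi_p(x^{p^{n-1}})$, a well-known identity for prime powers, and the former from the fact that dividing a polynomial in $\Z[x]$ by a monic polynomial in $\Z[x]$ yields quotient and remainder in $\Z[x]$. Everything else is a bookkeeping comparison of coefficients. Since the excerpt cites this as \cite{s}, Theorem 1, I would keep the write-up to this short cyclotomic-polynomial argument rather than reproving it from scratch; but the outline above is self-contained modulo these two classical facts about $\Phi_{p^n}$.
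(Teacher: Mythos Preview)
Your argument is correct: reducing to $\Phi_{p^n}(x)\mid P(x)$ in $\Z[x]$ via the monicity of $\Phi_{p^n}$, using the explicit form $\Phi_{p^n}(x)=\sum_{k=0}^{p-1}x^{kp^{n-1}}$, and then reading off $a_{i+kp^{n-1}}=b_i$ from the product is exactly the right proof. The paper itself does not supply a proof of this lemma at all --- it is stated as a citation of Schoenberg's result (\cite{s}, Theorem~1) --- so your write-up in fact goes beyond what the paper provides, and coincides with the standard (indeed, Schoenberg's) argument.
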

Lemma \ref{SchLemma} has the following special form. 
\begin{lem}%[\cite{FFS}, Lemma 2.4]
	\label{permu}
	Let $(b_0,b_1,\cdots, b_{p-1})\in \mathbb{Z}^{p}$.
	If $\sum_{j=0}^{p-1} e^{2\pi i b_j/p^n}=0$, then subject to a permutation of $(b_0,\cdots, b_{p-1})$, there exist $0\leq r \leq p^{n-1}-1$ such  that
	$$b_j  \equiv r+ jp^{n-1} (\!\!\!\!\mod p^n) $$ for all  $ j =0,1, \cdots,p-1$.
\end{lem}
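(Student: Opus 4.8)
\textbf{Proof plan for Lemma \ref{permu}.}
The plan is to deduce this from Lemma \ref{SchLemma} by padding the $p$-term sum out to a $p^n$-term sum. Given $(b_0,\dots,b_{p-1})\in\mathbb{Z}^p$ with $\sum_{j=0}^{p-1}\omega_{p^n}^{b_j}=0$, I would first reduce each $b_j$ modulo $p^n$ so that $b_j\in\{0,1,\dots,p^n-1\}$, which does not change $\omega_{p^n}^{b_j}$. Then define $(a_0,\dots,a_{p^n-1})\in\mathbb{Z}^{p^n}$ by letting $a_k$ be the number of indices $j$ with $b_j=k$. By construction $\sum_{k=0}^{p^n-1}a_k\omega_{p^n}^k=\sum_{j=0}^{p-1}\omega_{p^n}^{b_j}=0$, so $(a_0,\dots,a_{p^n-1})\in\mathcal{M}_{p^n}$, and the $a_k$ are non-negative with $\sum_k a_k=p$.

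Next I would apply Lemma \ref{SchLemma}: for every $0\le i\le p^{n-1}-1$ the values $a_i,a_{i+p^{n-1}},\dots,a_{i+(p-1)p^{n-1}}$ are all equal, say to $c_i\ge 0$. Hence $p=\sum_k a_k=p\sum_{i=0}^{p^{n-1}-1}c_i$, so $\sum_{i=0}^{p^{n-1}-1}c_i=1$; since the $c_i$ are non-negative integers, exactly one of them equals $1$ and the rest are $0$. Let $r$ be the unique index with $c_r=1$. Then $a_k=1$ precisely when $k\equiv r\pmod{p^{n-1}}$, i.e. when $k\in\{r,\,r+p^{n-1},\,\dots,\,r+(p-1)p^{n-1}\}$, and $a_k=0$ otherwise. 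Translating back to the multiset $\{b_0,\dots,b_{p-1}\}$: it consists of exactly the $p$ values $r+jp^{n-1}$ for $j=0,1,\dots,p-1$, each occurring once. Thus after a suitable permutation we have $b_j\equiv r+jp^{n-1}\pmod{p^n}$ for all $j$, as claimed, and $0\le r\le p^{n-1}-1$ by construction.

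There is essentially no serious obstacle here; the only point requiring a little care is the bookkeeping between the multiset $\{b_j\}$ and the multiplicity vector $(a_k)$, and the observation that the positivity of the $a_k$ (which is what forces "exactly one $c_i$ nonzero", rather than merely a $\mathbb{Z}$-linear relation among the $c_i$) is exactly what upgrades Lemma \ref{SchLemma} to the rigid conclusion stated. I would also note explicitly that the permutation is needed because Lemma \ref{SchLemma} only controls the multiset of $b_j$'s, not their order.
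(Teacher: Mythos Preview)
Your proposal is correct and matches the paper's approach: the paper does not give an explicit proof of Lemma~\ref{permu} but simply introduces it as a ``special form'' of Lemma~\ref{SchLemma}, and your argument---forming the multiplicity vector $(a_k)$, applying Lemma~\ref{SchLemma}, and using non-negativity together with $\sum a_k=p$ to force a unique residue class $r$---is exactly the natural way to make that deduction precise.
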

The set $\{b_0, b_1, \dots, b_{p-1} \}$ is sometime called a \textit{$p$-cycle in $\Z$} (or in $\Z/p^n\Z$).
Now we introduce the notion of $p^n$-cycles in $\Qp$.
We say that a finite set $C\subset \Qp$ is \textit{a $p^n$-cycle in $\Qp$} if $C=\{c_0, c_1, \cdots, c_{p-1}  \}$ with the form
$$
c_j=\varsigma+jp^{n}+\zeta_j, 
$$
for $0\le j\le p-1$ where $\varsigma\in \Qp$ and $\zeta_j\in B(0, p^{-n-1})$. This means that $\left( C_{\mod p^n} \right)$ is a $p$-cycle in $\Z$ (or alternately in $\Z/p^{n+1}\Z$). 
The $p^n$-cycles play an important role in the study of vanishing sum of characters, which is shown as follows.
\begin{lem}\label{lem:p^n-cycles}
	Let $C\subset\Q_p$ be a finite set. There exists $\xi\in \Qp$ such that 
	$\sum_{c\in C}\chi(\xi c)=0,$ if and only if $C$ is a union of $\frac{p}{|\xi|_p}$-cycles. 
\end{lem}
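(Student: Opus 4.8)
The plan is to reduce everything to the structure of vanishing sums of $p^n$-th roots of unity via Lemma \ref{SchLemma}, after first using scaling to normalize $\xi$. Write $|\xi|_p = p^{-m}$, so that the claim is that $\sum_{c\in C}\chi(\xi c)=0$ iff $C$ is a union of $p^{m}$-cycles. Replacing $C$ by $p^{m}C$ and $\xi$ by $p^{-m}\xi$ (which preserves $\chi(\xi c)$, is a bijection on finite sets, and sends $p^{m}$-cycles to $p^{0}$-cycles), I may assume $|\xi|_p=1$, i.e. $\xi\in\mathbb{Z}_p^\times$; and then replacing $C$ by $\xi C$ I may further assume $\xi=1$. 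So the statement to prove becomes: for a finite set $C\subset\mathbb{Q}_p$, $\sum_{c\in C}\chi(c)=0$ iff $C$ is a union of $1$-cycles, i.e. of sets of the form $\{\varsigma+j+\zeta_j : 0\le j\le p-1\}$ with $\varsigma\in\mathbb{Q}_p$ and $\zeta_j\in B(0,p^{-1})$. I would be careful to record that scaling $\xi$ by $p^{m}$ changes the radius parameter of a cycle by $m$, which is exactly why a general $\xi$ produces $\frac{p}{|\xi|_p}$-cycles rather than $p$-cycles.

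Next I would organize $C$ by the values of $\chi$. Since $\chi(c)$ depends only on the fractional part $\{c\}$, and since $\sum_{c\in C}\chi(c)=0$ forces $C\subset\{c:|c|_p>1\}$ up to the part of $C$ landing in a single ball $\varsigma+\mathbb{Z}_p$ (on which $\chi$ is constant and which would contribute a nonzero integer multiple of a root of unity), let $N$ be the largest integer with $C\cap S(0,p^N)\ne\emptyset$; by \eqref{one-in-unit-ball}, $\chi$ takes values among $p^N$-th roots of unity on $C$. Partition $C$ according to the coset of $B(0,p^{-1})$ each element lies in; within such a ball $\chi$ is constant, so writing $a_t$ for the number of points of $C$ in the coset on which $\chi$ equals $e^{2\pi i t/p^N}$, the hypothesis reads $\sum_{t=0}^{p^N-1} a_t\, \omega_{p^N}^{\,t}=0$ with $a_t\ge 0$ integers, i.e. $(a_0,\dots,a_{p^N-1})\in\mathcal{M}_{p^N}$. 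By Lemma \ref{SchLemma}, $a_i=a_{i+jp^{N-1}}$ for all $i$ and all $j=0,\dots,p-1$. Hence every ``fiber'' over a residue $r$ mod $p^{N-1}$ has the same multiplicity $a_r$ across its $p$ liftings $r, r+p^{N-1},\dots,r+(p-1)p^{N-1}$ mod $p^N$, and peeling off one point from each of the $p$ cosets indexed by $r+jp^{N-1}$ (for each fixed $r$) gives exactly a $1$-cycle: it has the form $\{\varsigma+j p^{N-1}\cdot(\text{unit correction})+\zeta_j\}$ — more precisely, since we normalized $\xi=1$ and are reading fractional parts, the point in the $j$-th coset has fractional part $\equiv r p^{-N}+ j p^{-1}$, i.e. differs from a base point by $j p^{-1}$ plus something in $B(0,p^{-1})$... here I would instead undo the normalization carefully, but the upshot is a $\frac{p}{|\xi|_p}$-cycle. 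Iterating (decreasing the multiplicities uniformly), $C$ decomposes as a disjoint union of such cycles. This proves the ``only if'' direction.

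For the ``if'' direction, it suffices by additivity to check that a single $\frac{p}{|\xi|_p}$-cycle $C=\{\varsigma+j p^{m}+\zeta_j\}_{j=0}^{p-1}$ with $|\xi|_p=p^{-m}$ and $\zeta_j\in B(0,p^{-m-1})$ satisfies $\sum_{j}\chi(\xi c_j)=0$: indeed $\chi(\xi\zeta_j)=1$ since $\xi\zeta_j\in B(0,p^{-1})\subset\mathbb{Z}_p$, so $\sum_j\chi(\xi c_j)=\chi(\xi\varsigma)\sum_{j=0}^{p-1}\chi(\xi j p^{m})=\chi(\xi\varsigma)\sum_{j=0}^{p-1} e^{2\pi i j (p^m\xi \bmod \text{unit})/p}$, and since $p^{m}\xi\in\mathbb{Z}_p^\times$ the numbers $\{j\cdot(p^m\xi)\bmod p\}_j$ run over all residues mod $p$, so the sum is $\sum_{k=0}^{p-1}\omega_p^{\,k}=0$. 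The main obstacle — and the step I would spend the most care on — is the bookkeeping in the ``only if'' direction: translating the purely combinatorial output of Lemma \ref{SchLemma} (equal multiplicities on arithmetic-progression fibers mod $p^N$) back into the metric statement about $p^{m}$-cycles in $\mathbb{Q}_p$, keeping the scaling factor $\frac{p}{|\xi|_p}$ straight and handling the cosets within $B(0,p^{-1})$ correctly (where $\chi$ is already constant, so they are absorbed into the $\zeta_j$'s). The degenerate cases where part of $C$ lies in a single ball $\varsigma+\mathbb{Z}_p$ (forcing that part to be empty, since a positive-integer multiple of a root of unity is never cancelled by the rest of a nonnegative-coefficient vanishing sum unless... ) also need a clean argument, which I would give by noting $a_0$ in the indexing above must then equal all its translates and can be subtracted off uniformly.
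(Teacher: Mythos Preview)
Your strategy is the paper's: encode $\sum_{c\in C}\chi(\xi c)=0$ as a relation in $\mathcal{M}_{p^N}$ via the multiplicities $a_t=\sharp\{c\in C:\chi(\xi c)=\omega_{p^N}^{\,t}\}$, apply Lemma~\ref{SchLemma} to get $a_i=a_{i+jp^{N-1}}$ for all $j$, peel off size-$p$ subsets $C_k$ each with $\sum_{c\in C_k}\chi(\xi c)=0$, and identify each $C_k$ via Lemma~\ref{permu} as a cycle of the required scale. The paper compresses exactly this into two sentences, skipping both your preliminary normalization and the explicit ``if'' direction.

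The normalization step, however, is where your write-up breaks. With $|\xi|_p=p^{-m}$ (i.e.\ $v_p(\xi)=m$) one has $|p^{m}\xi|_p=p^{-m}\cdot p^{-m}=p^{-2m}$, so the assertion $p^{m}\xi\in\mathbb{Z}_p^{\times}$ in your ``if'' computation is false unless $m=0$; likewise the substitution $C\mapsto p^{m}C$ sends a set with step $jp^{k}$ to one with step $jp^{k+m}$, hence does \emph{not} carry $p^{m}$-cycles to $p^{0}$-cycles under the paper's definition. (There is also an off-by-one: $\tfrac{p}{|\xi|_p}=p^{m+1}$, not $p^{m}$.) The clean reduction is to replace $C$ by $\xi C$ directly, which genuinely reduces to the case $\xi=1$; after that your multiplicity argument and peeling procedure are correct as written, and the $\zeta_j$'s absorb the coset ambiguity automatically, so the side discussion about ``the part of $C$ landing in a single ball'' is unnecessary. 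You were right to flag this bookkeeping as the delicate point---it just needs to be redone with the correct scaling factor.
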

\begin{proof}
By Lemma \ref{SchLemma}, we obtain that $C$ is a union of the sets satisfying the following conditions: each of them, say $C_k$, satisfies that $\sharp C_k=p$ and that
\begin{equation*}
\sum_{c\in C_k} \chi(\xi c)=0.
\end{equation*}
By Lemma \ref{permu} and the definition of $p^n$-cycles, we have that each $C_k$ is a $p^n$-cycle. This completes the proof.
\end{proof}

For a finite set $C\in \Qp$, we denote by $\sharp C_n^{\xi}=\sharp (C\cap B(\xi, p^n))$.

\section{Density of uniformly discrete set}\label{sec:density}
We say that a uniformly discrete set  $E$ has a {\em bounded density} if the following limit exists for some $x_0\in \Q_p$	
$$
D(E):=\lim\limits_{k\to \infty}\frac{\sharp(B(x_0,p^k)\cap E)}{p^k},
$$
%where $|A|$ denotes the Haar measure of a Borel set $A$. 
which is called the {\em density} of $E$. Actually, if the limit exists for some $x_0 \in \Q_p$, then
it exists for all $x\in \Q_p$ and the limit is independent of $x$. In fact, for any $x_0, x_1\in \Qp$, when $k$ is  large enough such that  $|x_0-x_1|_p<p^k$, we have $B(x_0, p^k)=B(x_1, p^k)$ eventually.

\begin{prop}\label{prop:density zero}
	Let $f\in \mathcal{D}'$ be non-negative. Let $E$ be a uniformly discrete set in $\Qp$. Suppose $f*\delta_E=1$ and $f\notin L^1(\Qp)$. Then the density of $E$ exists and $D(E)=0$.
\end{prop}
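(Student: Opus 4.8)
The plan is to argue by contradiction: assuming $D(E)$ exists and is positive, or that the relevant averages do not go to zero, I want to derive that $f$ must be integrable, contradicting $f \notin L^1(\Qp)$. First I would set up the averaging machinery. For each $k \geq 1$ let me test the identity $f * \delta_E = 1$ against the indicator $\Delta_k = 1_{B(0,p^k)}$, or rather work with the regularizations so that everything makes sense as a genuine pairing. The key point is that convolving with $\delta_E$ and then localizing to a large ball $B(0,p^k)$ should, up to boundary effects that are negligible as $k \to \infty$, replace $f*\delta_E$ by a sum of $\sharp(E \cap B(0,p^k))$ translates of $f$ (translated by the points of $E$), each of which has total mass $\int f$ over the appropriate region. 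More precisely, I expect to show that
\begin{equation*}
\langle f*\delta_E, \Delta_k \rangle = \sum_{\lambda \in E \cap B(0,p^k)} \langle f, \Delta_k(\cdot + \lambda) \rangle + o(p^k),
\end{equation*}
and since the left-hand side equals $\langle 1, \Delta_k\rangle = p^k$, after dividing by $p^k$ and letting $k\to\infty$ I would obtain a relation linking $D(E)$ to $\lim_k \frac{1}{p^k}\langle f, \Delta_k\rangle$. Since $f$ is non-negative, $\langle f, \Delta_k\rangle = \langle f, 1_{B(0,p^k)}\rangle$ is a non-decreasing quantity in $k$, and $f \notin L^1$ forces $\langle f, \Delta_k\rangle \to \infty$.

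The heart of the argument is then the following dichotomy. If $D(E) > 0$, then from the displayed relation the translates of $f$ over $E$ cannot overfill $B(0,p^k)$: roughly, $D(E) \cdot \langle f, \Delta_k \rangle \lesssim p^k$ eventually, which combined with $\langle f, \Delta_k\rangle \to \infty$ is not yet a contradiction, so I need to be more careful. The right way is to observe that $f*\delta_E = 1$ means the translates of $f$ by $E$ tile the constant function $1$, hence the "average mass per lattice point" is forced: I would show $\frac{1}{p^k}\langle f, \Delta_k\rangle \cdot \sharp(E\cap B(0,p^k))/p^k \to$ something finite forces $\sharp(E\cap B(0,p^k))/p^k \to 0$ precisely because $\langle f, \Delta_k\rangle/p^k$ stays bounded below away from zero when $f$ has "full-dimensional" support while $\langle f,\Delta_k\rangle\to\infty$ — actually the cleanest route is: for any fixed large $N$, writing $B(0,p^k)$ as a disjoint union of $p^{k-N}$ balls of radius $p^N$, non-negativity of $f$ gives that $\langle f, \Delta_k(\cdot+\lambda)\rangle \geq \langle f, 1_{B(z,p^N)}\rangle$ for a suitable ball; summing and using $f*\delta_E=1$ I expect to trap $\sharp(E\cap B(0,p^k))$ between $p^k/M_k$-type quantities where $M_k = \langle f,\Delta_k\rangle \to\infty$, yielding $D(E) = \lim \sharp(E\cap B(0,p^k))/p^k \leq \lim p^k/(p^k M_k') = 0$.

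I anticipate the main obstacle is making the approximation $\langle f*\delta_E,\Delta_k\rangle \approx \sum_{\lambda\in E\cap B(0,p^k)}\langle f,\Delta_k(\cdot+\lambda)\rangle$ rigorous, i.e. controlling the boundary contribution from points $\lambda\in E$ near the sphere $S(0,p^k)$ together with the mass of $f$ that a translate $f(\cdot-\lambda)$ with $\lambda$ interior leaks outside $B(0,p^k)$. Here I would exploit the ultrametric structure: since $f$ is a distribution, its regularization $\Delta_m(f*\theta_m)$ is a genuine test function supported in some $B(0,p^{R})$, and for $k > R$ the ball $B(0,p^k)$ is partitioned into translates of $B(0,p^R)$, so a translate $f(\cdot - \lambda)$ with $\lambda \in B(0,p^k)$ and $|\lambda|_p \le p^{k-1}$ (say) is entirely supported inside $B(0,p^k)$ by the non-Archimedean triangle inequality — there is genuinely no leakage, only the outermost sphere $S(0,p^k)$ contributes error, and that is $O(p^{k-1})$ relative terms, hence negligible after dividing by $p^k$. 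Once this is in place, combining $\langle f,\Delta_k\rangle\to\infty$ (from $f\notin L^1$ and non-negativity) with the tiling identity forces $D(E)$ to exist and equal $0$, as claimed. I would also need to invoke Proposition~\ref{Conv-Mul} and the compatibility of the distributional convolution with the ordinary one to justify that $f*\delta_E=1$ may legitimately be tested against $\Delta_k$ in the manner above, together with Proposition~\ref{limit} to pass between $f$ and its regularizations in the pairings.
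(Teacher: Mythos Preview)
Your approach is essentially the paper's --- pair $f*\delta_E=1$ with $\Delta_k=1_{B(0,p^k)}$, use non-negativity of $f$ together with $\langle f,\Delta_k\rangle\to\infty$ --- and you do eventually reach the decisive bound $\sharp(E\cap B(0,p^k))/p^k\le 1/\langle f,\Delta_k\rangle$. But you are working far too hard, and the muddle in your middle paragraph comes from not exploiting the ultrametric structure sharply enough. The fact that trivializes everything is this: for $\lambda\in B(0,p^k)$ one has $B(-\lambda,p^k)=B(0,p^k)$ \emph{exactly}, hence
\[
\langle f(\cdot-\lambda),\Delta_k\rangle=\langle f,1_{B(-\lambda,p^k)}\rangle=\langle f,\Delta_k\rangle
\]
with no error term. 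There is no leakage, no outermost-sphere correction, no $o(p^k)$, and no need for regularizations or Propositions~\ref{Conv-Mul} and~\ref{limit}. The paper simply writes the exact identity $p^k=\sum_{\lambda\in E}\langle f,1_{B(-\lambda,p^k)}\rangle$, then drops by non-negativity all terms except those with $\lambda\in B(0,p^k)$ (each of which equals $\langle f,\Delta_k\rangle$), obtaining
\[
p^k\ \ge\ \sharp\bigl(E\cap B(0,p^k)\bigr)\cdot\langle f,\Delta_k\rangle
\]
directly. This also clears up your normalization slip: the inequality is $\dfrac{\sharp(E\cap B(0,p^k))}{p^k}\cdot\langle f,\Delta_k\rangle\le 1$, not $\lesssim p^k$, and it \emph{is} already the contradiction you were looking for --- no further decomposition into balls of radius $p^N$ is required.
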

\begin{proof}
Since $f$ is non-negative and $f\notin L^1(\Qp)$, there exists an integer $N$ such that $\langle f, 1_{B(0,p^n)} \rangle>0$ for all $n\ge N$.
Integrating the equality $f*\delta_E=1$ over the ball $B(0, p^n)$, we have
\begin{eqnarray}\label{ee}
p^n=\sum_{\lambda \in E} \langle f(\cdot-\lambda), 1_{B(0,p^n)}(\cdot)\rangle.  
\end{eqnarray}	
Observe that 
\begin{equation}\label{eq:density zero 1}
\langle f(\cdot-\lambda), 1_{B(0,p^n)}(\cdot)\rangle=\langle f, 1_{B(-\lambda,p^n)}\rangle=\langle f, 1_{B(y,p^n)}\rangle
\end{equation}
for any $y\in B(-\lambda, p^n)$.
Let $P_n$ be the  set in $\Qp$ such that $\{B(y, p^n): y\in P_n \}$ is a partition of $\Qp$. 
By \eqref{ee} and \eqref{eq:density zero 1}, we have
\begin{equation}\label{eq:density zero 2}
1=\sum_{y \in P_n} \frac{\sharp(E\cap B(y,p^n) )}{p^n}\cdot \langle f, 1_{B(y,p^n)}\rangle. 
\end{equation}
Without loss of generality, we might assume $0\in P_n$ for all $n$. By \eqref{eq:density zero 2} and the fact that $f$ is non-negative, we have 
\begin{equation*}
1\ge \frac{\sharp(E\cap B(0,p^n) )}{p^n}\cdot \langle f, 1_{B(0,p^n)}\rangle,~\forall~n\ge N. 
\end{equation*}
This implies that
\begin{equation}
\frac{\sharp(E\cap B(0,p^n) )}{p^n}\le \frac{1}{\langle f, 1_{B(0,p^n)}\rangle},~\forall~n\ge N.
\end{equation}
Since $f\notin L^1(\Qp)$ and $f$ is non-negative, we finally get 
$$
\limsup\limits_{n\to \infty} \frac{\sharp(E\cap B(0,p^n) )}{p^n}\le \lim\limits_{n\to \infty}\frac{1}{\langle f, 1_{B(0,p^n)}\rangle} =0.
$$	
\end{proof}
We remark that the case where $f\in L^1(\Qp)$ (not necessarily non-negative) has been considered in \cite{FFLS}.

\section{Spectral measures in $\Qp$}\label{sec: spectral measures in Qp}
Let $\mu$ be a spectral measure with spectrum $\Lambda$, which is characterized by the following functional equation
\begin{equation}\label{F-equation}
|\widehat{\mu}|^2 * \delta_{\Lambda} = 1.
\end{equation}
We remark that the convolution in (\ref{F-equation})  is understood as a convolution of Bruhat-Schwartz distributions and even as a convolution in the Colombeau algebra of generalized functions. One of reasons is that the Fourier transform of the infinite Radon measure $\delta_{\Lambda}$ is not defined for the measure $\delta_{\Lambda}$ but for the distribution $\delta_{\Lambda}$ (see Lemma \ref{lem:Lambda uniformly discrete}). Another is that $|\widehat{\mu}|^2$ is not necessarily integrable and thus the Fourier transform of $|\widehat{\mu}|^2$ may not defined for the function $|\widehat{\mu}|^2$ but for the distribution $|\widehat{\mu}|^2$.

Without loss of generality, we assume $0\in \Lambda$. Taking Fourier transform of both sides of (\ref{F-equation}), we have
\begin{equation}\label{F-equation 2}
\widehat{|\widehat{\mu}|^2} \cdot \widehat{\delta}_{\Lambda} = \delta_0,
\end{equation}
which is the main object of study throughout this section.
\begin{lem}\label{lem:non-negative}
	The distribution $\widehat{|\widehat{\mu}|^2}$ is non-negative.
\end{lem}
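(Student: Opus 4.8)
The goal is to show that $\widehat{|\widehat{\mu}|^2}$ is a non-negative distribution, i.e. $\langle \widehat{|\widehat{\mu}|^2}, \phi\rangle \ge 0$ for every non-negative test function $\phi \in \mathcal{D}$. The plan is to realize $|\widehat{\mu}|^2$ as (a distributional limit of) the Fourier transform of a non-negative measure, so that $\widehat{|\widehat{\mu}|^2}$ becomes (a limit of) that non-negative measure itself, up to an orientation-reversing change of variable. Concretely, write $|\widehat{\mu}(\xi)|^2 = \widehat{\mu}(\xi)\overline{\widehat{\mu}(\xi)} = \widehat{\mu \ast \widetilde\mu}(\xi)$, where $\widetilde\mu$ is the pushforward of $\overline{\mu}$ under $x \mapsto -x$; since $\mu$ is a probability measure, $\mu \ast \widetilde\mu$ is a genuine finite non-negative (in fact probability) measure on $\Qp$. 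Then morally $\widehat{|\widehat{\mu}|^2} = \widehat{\widehat{\mu \ast \widetilde\mu}}$, and by the Fourier inversion relation on $\Qp$ (the double Fourier transform is the reflection $x \mapsto -x$, which preserves balls centered at $0$ and maps $B(y,p^n)$ to $B(-y,p^n)$), this is again the measure $\mu\ast\widetilde\mu$ up to reflection — hence non-negative.

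The step requiring care is that $|\widehat{\mu}|^2$ need not be integrable, so none of the identities above are literally identities of functions or of Fourier transforms of functions; everything must be interpreted in $\mathcal{D}'$. So first I would record that $\widehat{\mu}$ is a bounded uniformly continuous function on $\Qp$ (standard for Fourier transforms of finite measures), hence $|\widehat{\mu}|^2$ is a bounded, hence locally integrable, function, hence defines a distribution in the usual way via $\langle |\widehat{\mu}|^2, \phi\rangle = \int |\widehat{\mu}|^2 \phi\, dx$. Next I would verify the distributional identity $|\widehat{\mu}|^2 = \widehat{\nu}$ holds as distributions, where $\nu := \mu \ast \widetilde\mu \in \M(\Qp)$ is non-negative: for any $\phi\in\mathcal D$, by definition of the Fourier transform of a distribution, $\langle \widehat{\nu}, \phi\rangle = \langle \nu, \widehat\phi\rangle = \int \widehat\phi\, d\nu$, and one checks by Fubini that this equals $\int |\widehat\mu(\xi)|^2 \phi(\xi)\, d\xi$, using $\widehat{\phi}(x) = \int \phi(\xi)\overline{\chi_\xi(x)}\,d\xi$ together with $\int \overline{\chi_\xi(x)}\,d\nu(x) = \widehat\nu(\xi) = |\widehat\mu(\xi)|^2$ (the interchange is justified since $\nu$ is finite and $\phi$ has compact support). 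Taking Fourier transforms of the distributional identity $|\widehat\mu|^2 = \widehat\nu$ gives $\widehat{|\widehat\mu|^2} = \widehat{\widehat\nu}$.

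Finally I would identify $\widehat{\widehat\nu}$ explicitly: for $\phi\in\mathcal D$, $\langle \widehat{\widehat\nu}, \phi\rangle = \langle \widehat\nu, \widehat\phi\rangle = \langle \nu, \widehat{\widehat\phi}\rangle$, and since $\widehat{\widehat\phi}(x) = \phi(-x)$ for test functions on $\Qp$ (the Fourier transform is an involution up to reflection; this follows from \eqref{FB} applied twice, or from the cited fact that $f\mapsto\widehat f$ is a homeomorphism of $\mathcal D$ whose square is reflection), we get $\langle \widehat{|\widehat\mu|^2}, \phi\rangle = \int \phi(-x)\, d\nu(x) \ge 0$ whenever $\phi\ge 0$, because $\nu$ is a non-negative measure. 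This proves the lemma. The only genuine obstacle is bookkeeping: making sure the Fubini interchanges are legitimate (they are, since $\nu$ is a finite measure and every $\phi\in\mathcal D$ has compact support and is bounded) and pinning down the sign/reflection conventions in the double-Fourier-transform formula so that "non-negative measure reflected" is still manifestly non-negative — which it is, since $x\mapsto -x$ is a measure-preserving homeomorphism of $\Qp$.
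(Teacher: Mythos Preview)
Your proposal is correct and follows essentially the same route as the paper: both identify $|\widehat{\mu}|^2$ with $\widehat{\mu * \mu_{-}}$ (your $\widetilde\mu$ is the paper's $\mu_{-}$), so that $\widehat{|\widehat{\mu}|^2}$ is the non-negative measure $\mu * \mu_{-}$ (up to the harmless reflection $x\mapsto -x$). The only cosmetic difference is that the paper tests against the indicator functions $1_{B(x,p^k)}$ directly (which suffice since they span $\mathcal D$), whereas you test against an arbitrary non-negative $\phi\in\mathcal D$ and invoke the double-Fourier-transform reflection formula; the content is the same.
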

\begin{proof}
	It is sufficient to show that 
	\begin{equation}\label{eq:non-negative 1}
	\langle \widehat{|\widehat{\mu}|^2}, 1_{B(x,p^{k})}\rangle \ge 0
	\end{equation}
	for any given $x\in \Qp$ and $k\in \Z$.
	By definition of Fourier transformation on distribution, we have
	\begin{equation}\label{eq:non-negative 2}
	\langle \widehat{|\widehat{\mu}|^2}, 1_{B(x,p^{k})}\rangle=p^{k}\langle {|\widehat{\mu}|^2}, \chi_x1_{B(0,p^{-k})}\rangle.
	\end{equation}
	We define $\mu_{-}\in \mathcal{P}(\Qp)$ by $\mu_{-}(\Omega)=\mu(-\Omega)$ for any Borel set $\Omega\subset \Qp$. By Proposition \ref{Conv-Mul}, we get
	\begin{equation}\label{eq:non-negative 3}
	|\widehat{\mu}|^2=\widehat{\mu*\mu_{-}}.
	\end{equation}
	It follows from \eqref{eq:non-negative 2} and the fact that $\mu*\mu_{-}\in \mathcal{P}(\Qp)$ that
	\begin{equation*}
	\langle \widehat{|\widehat{\mu}|^2}, 1_{B(x,p^{k})}\rangle=p^{k}\langle \widehat{\mu*\mu_{-}}, \chi_x1_{B(0,p^{-k})}\rangle=\langle \mu*\mu_{-}, 1_{B(x,p^{k})}\rangle,
	\end{equation*}
	which is actually equal to $\mu*\mu_{-}(B(x,p^{k}))$. Thus we obtain \eqref{eq:non-negative 1}. 
\end{proof}
We remark that in the proof of Lemma \ref{lem:non-negative}, it not only tells that the distribution $\widehat{|\widehat{\mu}|^2}$ is non-negative, but only shows that $\widehat{|\widehat{\mu}|^2}=\mu*\mu_{-}$ in the sense of distribution.

Applying Lemma \ref{lem:non-negative} and Proposition \ref{prop:union of zeros} to the functional equation \eqref{F-equation 2}, we obtain
\begin{equation}\label{eq:spectral measure, zero}
\Qp\setminus\{0\}\subset \mathcal{Z}_{\widehat{|\widehat{\mu}|^2}} \cup \mathcal{Z}_{\widehat{\delta_{\Lambda}}}. 
\end{equation}
It roughly shows that the union of zeros of the distributions $\widehat{|\widehat{\mu}|^2}$ and $\widehat{\delta_{\Lambda}}$ are abundant. 
This is one of our motivation to investigate the measure $\mu$ and its spectrum $\Lambda$ by studying the zeros of distributions $\widehat{|\widehat{\mu}|^2}$ and $\widehat{\delta_{\Lambda}}$ in the following subsections.

\subsection{Structure of $\Lambda$}
In this section, we first justify that $\delta_{\Lambda}$ is indeed a distribution in $\Qp$ by the following lemma.
\begin{lem}\label{lem:Lambda uniformly discrete}
	The set $\Lambda$ is uniformly discrete.
\end{lem}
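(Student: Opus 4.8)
The plan is to show that if $\Lambda$ fails to be uniformly discrete, then it cannot be an orthonormal set in $L^2(\mu)$, contradicting that $(\mu,\Lambda)$ is a spectral pair. The orthonormality condition $\langle \chi_\lambda, \chi_{\lambda'}\rangle_{L^2(\mu)} = \widehat{\mu}(\lambda'-\lambda) = 0$ for distinct $\lambda,\lambda'\in\Lambda$ means that all differences $\lambda-\lambda'$ lie in the zero set $\mathcal{Z}_{\widehat\mu}$ of $\widehat\mu$ (viewed as a continuous function, since $\mu$ is a bounded measure so $\widehat\mu$ is continuous). So it suffices to show that $\widehat\mu$ does not vanish on a neighborhood of $0$ that is "arbitrarily small": concretely, that there exists $n_0\in\mathbb{Z}$ with $\widehat\mu(\xi)\neq 0$ for all $\xi\in B(0,p^{n_0})$. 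Granting this, any two distinct points of $\Lambda$ differ by at least $p^{n_0+1}$ in absolute value (if $|\lambda-\lambda'|_p \le p^{n_0}$ then $\widehat\mu(\lambda-\lambda')\neq 0$, impossible), so $\Lambda$ is uniformly discrete.

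First I would establish the non-vanishing of $\widehat\mu$ near $0$. Since $\mu$ is a probability measure, $\widehat\mu(0) = 1$. Because $\mu$ is a bounded measure on $\Qp$, Proposition \ref{2case}-type reasoning (or directly: $\widehat\mu(\xi) = \int \overline{\chi_\xi(x)}\,d\mu(x)$) shows $\widehat\mu$ is continuous; more precisely, for the non-Archimedean character $\chi_\xi$, if $|\xi|_p \le p^{-m}$ then $\chi_\xi$ is constant (equal to $1$) on each ball of radius $p^m$. Choose $m$ large enough that $\mu\big(B(0,p^m)\big) > 1/2$ (possible since $\mu$ is a probability measure and $B(0,p^m)\uparrow\Qp$); after translating we may assume $\mu$ has an atom of mass $>1/2$ concentrated in a single small ball, or more carefully, pick $m$ so that some ball $B(a,p^{-m})$ has $\mu$-mass $>1/2$. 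Then for $|\xi|_p \le p^{m}$ (so that $\chi_\xi$ is constant on each radius-$p^{-m}$ ball), a direct estimate gives
\begin{equation*}
|\widehat\mu(\xi)| \ge \Big| \int_{B(a,p^{-m})} \overline{\chi_\xi(x)}\,d\mu(x)\Big| - \mu\big(\Qp\setminus B(a,p^{-m})\big) = \mu\big(B(a,p^{-m})\big) - \big(1-\mu(B(a,p^{-m}))\big) > 0,
\end{equation*}
using that $\chi_\xi$ is constant (hence of modulus exactly the mass) on that ball. This yields the desired $n_0$.

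The main obstacle is getting the quantitative non-vanishing statement in a clean form: one must be careful that "$\chi_\xi$ constant on balls of radius $p^{-m}$" corresponds to $|\xi|_p\le p^m$, and choose the ball $B(a,p^{-m})$ of large mass correctly (its existence follows from inner regularity / the fact that $\mu$ is a probability measure, partitioning $\Qp$ into countably many disjoint radius-$p^{-m}$ balls and taking $m$ large). I would finish by remarking that, alternatively, one can invoke that $\Lambda$ is an orthonormal \emph{basis}: if it were not uniformly discrete, the reproducing/Parseval structure $|\widehat\mu|^2 * \delta_\Lambda = 1$ of \eqref{F-equation} would force infinitely many $\lambda$ in a fixed compact ball, contradicting that the sum $\sum_{\lambda\in\Lambda}|\widehat\mu(\xi-\lambda)|^2$ must equal $1$ pointwise while $|\widehat\mu|^2$ is bounded below near each such accumulation of $\lambda$'s — but the direct argument via non-vanishing of $\widehat\mu$ near $0$ is cleaner and self-contained, so that is the route I would present.
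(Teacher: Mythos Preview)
Your overall strategy is exactly the paper's: use that $\widehat\mu(0)=1$ and $\widehat\mu$ is continuous to find a ball $B(0,p^{n_0})$ on which $\widehat\mu$ does not vanish, then conclude from the orthogonality relation $\widehat\mu(\lambda-\lambda')=0$ that $|\lambda-\lambda'|_p>p^{n_0}$ for all distinct $\lambda,\lambda'\in\Lambda$. The paper's proof is literally those two lines --- it invokes continuity of $\widehat\mu$ (standard for a probability measure) and picks $n$ with $|\widehat\mu(\xi)|>1/2$ on $B(0,p^n)$, nothing more.

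Your attempt to \emph{prove} the non-vanishing by an explicit mass estimate is both unnecessary and, as written, tangled: you first take $m$ large so that the big ball $B(0,p^m)$ has mass $>1/2$, but then you want a \emph{small} ball $B(a,p^{-m})$ of mass $>1/2$, which does not follow and in general fails for atomless $\mu$. The correct version of the estimate keeps the big ball: choose $m$ with $\mu(B(0,p^{m}))>1/2$; then for $|\xi|_p\le p^{-m}$ the character $\chi_\xi$ is constant on each ball of radius $p^{m}$, and your triangle-inequality bound gives $|\widehat\mu(\xi)|\ge 2\mu(B(0,p^{m}))-1>0$. So the roles of $p^m$ and $p^{-m}$ in your display are swapped. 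Once that is fixed the argument is fine --- but since you already note that $\widehat\mu$ is continuous, you can (and the paper does) skip the estimate entirely.
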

\begin{proof}
	Since $\mu \in \mathcal{P}(\Qp)$, it is well known that $\widehat{\mu}$ is continuous and $\widehat{\mu}(0)=1$. It follows that there exists $n \in \Z$ such that 
	\begin{equation}\label{eq:Lambda uniformly discrete}
	\widehat{\mu}(\xi)>\frac{1}{2}, ~\forall~|\xi|_p\le p^n.
	\end{equation}
	Since the set $\Lambda$ is the spectrum of $\mu$, we have $\widehat{\mu}(\lambda-\lambda')=0$ for any distinct $\lambda, \lambda'\in \Lambda$. Combining this with \eqref{eq:Lambda uniformly discrete}, we conclude that $|\lambda-\lambda'|_p>p^n$ for any distinct $\lambda, \lambda'\in \Lambda$.
\end{proof}

By Lemma \ref{zeroofE}, we know that if the point $x$ is contained in $\mathcal{Z}_{\widehat{\delta_{\Lambda}}}$, then the minimal sphere that is centered at $0$ and contains $x$ (that is, $S(0,p^{-n})$ with $|x|=p^{-n}$) is also included in $\mathcal{Z}_{\widehat{\delta_{\Lambda}}}$. 
Using this, in the following lemma, we analyze how the points in the set $\Lambda$ is distributed locally when a zero of the distribution $\delta_{\Lambda}$ is provided. Recall that $\sharp \Lambda_n^\xi=\sharp (\Lambda \cap B(\xi, p^n))$.
\begin{lem}\label{lem:zeros for delta_Lambda}
	Let $n\in \Z$. If $S(0,p^{-n}) \subset \mathcal{Z}_{\widehat{\delta_{\Lambda}}}$, then 
	\begin{equation}\label{eq:zeros for delta_Lambda 0}
	\sharp \Lambda_n^{\xi}=\sharp \Lambda_n^{\xi+p^{-n-1}},
	\end{equation}
	for every $\xi\in \Qp$.
\end{lem}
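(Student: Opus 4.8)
The plan is to convert the hypothesis ``$S(0,p^{-n})\subset\mathcal{Z}_{\widehat{\delta_{\Lambda}}}$'' into a vanishing sum of $p$-th roots of unity that records how the points of $\Lambda$ sit inside a ball of radius $p^{n+1}$, and then to invoke Lemma~\ref{SchLemma}. Fix $\xi\in\Qp$. Since $v_{p}(-p^{n})=n$, every point of the ball $B(-p^{n},p^{-n-1})$ has $p$-valuation exactly $n$, so $B(-p^{n},p^{-n-1})\subset S(0,p^{-n})\subset\mathcal{Z}_{\widehat{\delta_{\Lambda}}}$; because $\mathcal{Z}_{\widehat{\delta_{\Lambda}}}$ is the maximal open set on which $\widehat{\delta_{\Lambda}}$ vanishes, the test function $\psi:=\chi_{\xi}\cdot 1_{B(-p^{n},p^{-n-1})}\in\mathcal{D}$, whose support is $B(-p^{n},p^{-n-1})$, satisfies $\langle\widehat{\delta_{\Lambda}},\psi\rangle=0$.

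Next I would evaluate this pairing. Combining \eqref{FB} with the translation rule $\widehat{\chi_{a}f}(\cdot)=\widehat{f}(\cdot-a)$ yields $\widehat{\psi}(\zeta)=\chi\big(p^{n}(\zeta-\xi)\big)\,p^{-n-1}\,1_{B(\xi,p^{n+1})}(\zeta)$, and since $\widehat{\delta_{\Lambda}}$ pairs with a test function by summing its Fourier transform over $\Lambda$ — a finite sum because $\Lambda$ is uniformly discrete by Lemma~\ref{lem:Lambda uniformly discrete} — we get
\[
0=\langle\widehat{\delta_{\Lambda}},\psi\rangle=\sum_{\lambda\in\Lambda}\widehat{\psi}(\lambda)=p^{-n-1}\sum_{\lambda\in\Lambda\cap B(\xi,p^{n+1})}\chi\big(p^{n}(\lambda-\xi)\big).
\]
Now write $B(\xi,p^{n+1})=\bigsqcup_{j=0}^{p-1}B_{j}$ with $B_{j}:=B(\xi+jp^{-n-1},p^{n})$, and set $a_{j}:=\sharp(\Lambda\cap B_{j})$. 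For $\lambda\in B_{j}$ write $\lambda=\xi+jp^{-n-1}+\epsilon$ with $\epsilon\in B(0,p^{n})=p^{-n}\Zp$; then $p^{n}\epsilon\in\Zp$, so $\chi\big(p^{n}(\lambda-\xi)\big)=\chi(jp^{-1})=\omega_{p}^{j}$, a quantity depending only on $j$. Hence $\sum_{j=0}^{p-1}a_{j}\,\omega_{p}^{j}=0$, i.e. $(a_{0},\dots,a_{p-1})\in\mathcal{M}_{p}$, and Lemma~\ref{SchLemma} forces $a_{0}=a_{1}=\dots=a_{p-1}$. In particular $\sharp\Lambda_{n}^{\xi}=\sharp(\Lambda\cap B_{0})=\sharp(\Lambda\cap B_{1})=\sharp\Lambda_{n}^{\xi+p^{-n-1}}$, which is \eqref{eq:zeros for delta_Lambda 0}.

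I expect the only genuinely delicate point to be the design of the test function $\psi$: it must simultaneously be supported inside $S(0,p^{-n})$ (so that the zero hypothesis applies) and have Fourier transform equal to a character times the indicator of a ball of radius \emph{exactly} $p^{n+1}$. Splitting that ball into the $p$ sub-balls $B_{j}$ of radius $p^{n}$ is then precisely what makes the factor $\chi(p^{n}\epsilon)$ trivial — because $p^{n}\cdot p^{-n}\Zp=\Zp$ — and the remaining phase $\omega_{p}^{j}$ constant on each $B_{j}$; this is where the matching of the radius $p^{n}$ with the valuation $n$ of the chosen point $-p^{n}$ is used. Granting the Fourier-transform formula \eqref{FB}, the finiteness of the sums, and Lemma~\ref{SchLemma}, the remainder of the argument is routine bookkeeping.
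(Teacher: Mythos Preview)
Your proof is correct. The underlying idea matches the paper's---pair $\widehat{\delta_\Lambda}$ against a test function supported in $S(0,p^{-n})$, obtain a vanishing character sum over $\Lambda$, and appeal to Schoenberg---but your execution is more streamlined in two ways. First, by twisting the indicator $1_{B(-p^{n},p^{-n-1})}$ with the character $\chi_{\xi}$, your Fourier transform is supported exactly on $B(\xi,p^{n+1})$, so the sum runs only over $\Lambda\cap B(\xi,p^{n+1})$ and every $\xi\in\Qp$ is handled uniformly; the paper instead uses the untwisted $1_{B(p^{n},p^{-k})}$, which forces it to work inside $B(0,p^{k})$ and to carry out a separate case reduction on $|\xi|_p$. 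Second, you group terms by sub-ball to obtain the weighted relation $\sum_{j=0}^{p-1}a_{j}\omega_{p}^{j}=0$ and invoke Lemma~\ref{SchLemma} (with exponent $1$) directly, whereas the paper passes through the $p^{n}$-cycle machinery of Lemma~\ref{lem:p^n-cycles} to decompose $\Lambda_k$ and then counts how each cycle meets the sub-balls. Both arguments are short; yours avoids the intermediate detour.
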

\begin{proof}
	Fix $n\in \Z$. Clearly, if $|\xi|_p>p^{-n-1}$ then $\Lambda_n^{\xi}= \Lambda_n^{\xi+p^{-n-1}}$ and \eqref{eq:zeros for delta_Lambda 0} follows. Thus it remains to consider the case where $|\xi|_p\le p^{-n-1}$.
	Fix $k\ge n+1$ and $\xi\in \Qp$ with $|\xi|_p=p^{-k}$.
	Since $S(0,p^{-n}) \subset \mathcal{Z}_{\widehat{\delta_{\Lambda}}}$ and $B(p^n, p^{-k})\subset S(0, p^{-n})$, we have that
	\begin{equation*}
	\langle \widehat{\delta}_{\Lambda}, 1_{B(p^n, p^{-k})} \rangle=0.
	\end{equation*}
	By the definition of Fourier transformation, we get 
	\begin{equation*}
	\langle \delta_{\Lambda}, \chi_{p^n} 1_{B(0, p^{k})} \rangle=0.
	\end{equation*}
	It follows that
	\begin{equation*}
	\sum_{\lambda\in \Lambda_{k}} \chi(p^n\lambda)=0.
	\end{equation*}
	By Lemma \ref{lem:p^n-cycles}, the set $\Lambda_{k}$ is a union of $p^{n+1}$-cycles.
	Observe that any $p^{n+1}$-cycle in $\Lambda_{k}$ either has exactly one element in each balls $B(\xi+jp^{-n-1}, p^{n})$ for $0\le j\le p-1$ or does not intersect  $\bigcup_{0\le j\le p-1}B(\xi+jp^{-n-1}, p^{n})$ at all. It follows that the number $\sharp \Lambda_n^{\xi+jp^{-n-1}}$ is a constant independent of $j$.
	We thus obtain \eqref{eq:zeros for delta_Lambda 0} by taking arbitrarily $\xi\in \Qp$ with $|\xi|_p\le p^{-n-1}$.
\end{proof}	

From Lemma \ref{lem:zeros for delta_Lambda}, we see that the distribution of $\Lambda$ is as in Figure \ref{Fig:spectrum} whenever $S(0,p^{-n}) \subset \mathcal{Z}_{\widehat{\delta_{\Lambda}}}$.

\begin{figure}[h!]
	\centering
	\includegraphics[width=0.8\textwidth]{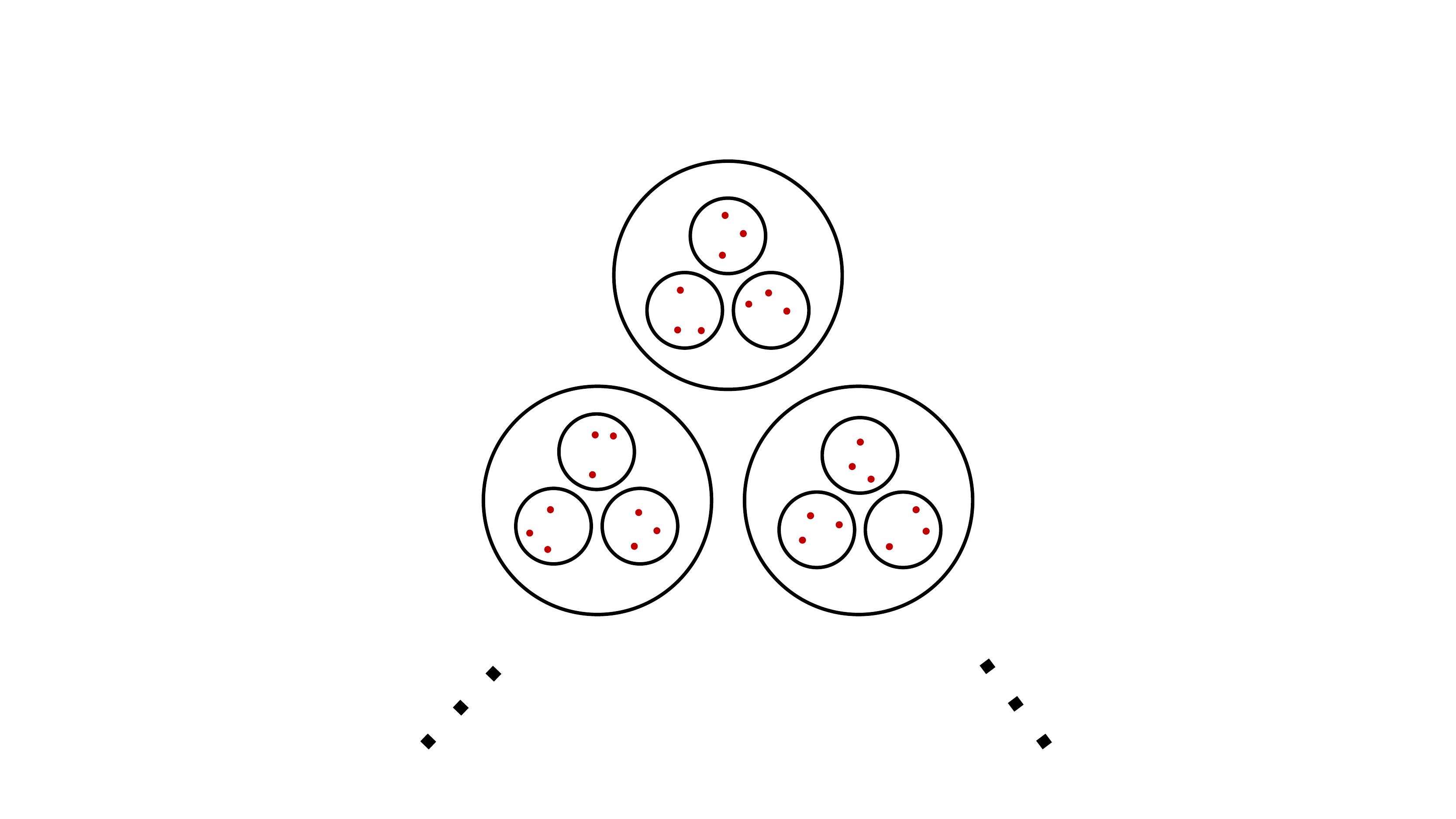}\\
	\caption{The distribution of $\Lambda$. The small ball has the radius $p^{n-1}$ and the big one $p^n$.} 
	\label{Fig:spectrum}
\end{figure}

Recall from Lemma  \ref{zeroofE} that $\mathcal{Z}_{\widehat{\delta_\Lambda}}$ has the following property: every sphere $S(0,p^{-n})$ either is contained in 
$\mathcal{Z}_{\widehat{\delta_\Lambda}}$ or does not intersect 
$\mathcal{Z}_{\widehat{\delta_\Lambda}}$.
Let
\begin{eqnarray*}	
	\mathbb{I}:&= &\left\{  n\in \Z:  S(0,p^{-n})\subset\mathcal{Z}_{\widehat{\delta_\Lambda}} \right\}, \\ 
	\mathbb{J}:&=&\{ n\in \Z: S(0,p^{-n})\cap \mathcal{Z}_{\widehat{\delta_\Lambda}}=\emptyset\},
\end{eqnarray*}
which form the partition of $\Z$. 
Due to Proposition \ref{zeroofE} and Lemma \ref{lem:Lambda uniformly discrete}, the set $\mathbb{J}$ contains the set $\{n\in \Z: n\le -n_{\Lambda}-2 \}$, where the integer $n_\Lambda$ is defined by \eqref{nE}.  Thus the set $\mathbb{I}$ has a minimal element, denoted by $i_\Lambda$, which is larger than $-n_\Lambda-1$. For $k\ge i_\Lambda$, let
$$
\mathbb{I}_{\le k}:= \left\{  n\in \mathbb{I}:  n\le k \right\},
$$
which is a finite set by the above statement. For $k\ge i_\Lambda$, let
$$
\mathbb{J}_{\le k}:= \left\{  n\in \mathbb{J}: i_\Lambda \le n\le k \right\},
$$
which is the complement set of $\mathbb{I}_{\le k}$ in $\{i_\Lambda, i_\Lambda+1, \dots, k \}$.

We will prove in the following lemma that every set $\Lambda_{k}^{\xi}$ contains a ``large" $p$-homogenous set.
\begin{lem}\label{lem:Lambda tree structure}
	For any $k\ge i_\Lambda$ and any $\xi\in \Lambda$,  the set $\Lambda_{k}^{\xi}$ contains a subset $C_k^\xi$ satisfying the condition that the set $C_k^\xi-\xi$ has $(k+1,k-i_\Lambda+1 ,I, J)$-tree structure with $I=k-\mathbb{I}_{\le k}$.
\end{lem}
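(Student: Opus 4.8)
The plan is to induct on $k \ge i_\Lambda$, at each stage using the structure produced by Lemma \ref{lem:zeros for delta_Lambda} when $k \in \mathbb{I}$ and a trivial refinement step when $k \in \mathbb{J}$. The base case is $k = i_\Lambda$: here $k - i_\Lambda + 1 = 1$, so the required tree structure asks only that $(p^{k+1}(C_k^\xi - \xi))_{\bmod p}$ be $p$-homogeneous in $\Z/p\Z$ with $I = \{0\}$ (since $i_\Lambda \in \mathbb{I}$, so $k - \mathbb{I}_{\le k} = \{0\}$), i.e. that $C_{i_\Lambda}^\xi - \xi$ contains one full residue class modulo $p^{-i_\Lambda - 1}$ inside $B(0, p^{i_\Lambda})$. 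Since $S(0, p^{-i_\Lambda}) \subset \mathcal{Z}_{\widehat{\delta_\Lambda}}$ and $\xi \in \Lambda$, Lemma \ref{lem:zeros for delta_Lambda} gives $\sharp \Lambda_{i_\Lambda}^{\xi + jp^{-i_\Lambda - 1}} = \sharp \Lambda_{i_\Lambda}^{\xi}$ for all $j$; as $\xi \in \Lambda_{i_\Lambda}^{\xi}$, this common value is $\ge 1$, so we may pick one element of $\Lambda$ in each ball $B(\xi + jp^{-i_\Lambda-1}, p^{i_\Lambda})$, $0 \le j \le p-1$, and these $p$ points form the desired $C_{i_\Lambda}^\xi$.

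For the inductive step, suppose $C_k^\xi \subset \Lambda_k^\xi$ has $(k+1, k - i_\Lambda + 1, k - \mathbb{I}_{\le k}, k - \mathbb{J}_{\le k})$-tree structure, and consider level $k+1$. There are two cases. If $k+1 \in \mathbb{J}$, then $\mathbb{I}_{\le k+1} = \mathbb{I}_{\le k}$, and I claim $C_k^\xi$ (viewed inside the larger ball $B(\xi, p^{k+1})$, with parameters shifted by $1$) already works: passing from $(k+1, \gamma, I, J)$-structure to $(k+2, \gamma+1, I+1, (J+1)\cup\{0\})$-structure is exactly the shift operation noted in Section \ref{sec:tree structure}, and $k+1 - \mathbb{I}_{\le k+1} = (k - \mathbb{I}_{\le k}) + 1$ while the new digit $0$ lands in the $J$-part — consistent with $k+1 \notin \mathbb{I}$. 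If instead $k+1 \in \mathbb{I}$, then $S(0, p^{-(k+1)}) \subset \mathcal{Z}_{\widehat{\delta_\Lambda}}$, so Lemma \ref{lem:zeros for delta_Lambda} applies at level $k+1$: for every $\eta \in \Qp$, the quantity $\sharp \Lambda_{k+1}^{\eta + jp^{-k-2}}$ is independent of $j \in \{0, \dots, p-1\}$. Applying this with $\eta$ ranging over (representatives of) the points of $C_k^\xi$, each ball $B(c, p^{k+1})$ with $c \in C_k^\xi$ splits into $p$ sub-balls $B(c + jp^{-k-2}, p^{k+1})$ each containing at least as many points of $\Lambda$ as $B(c, p^{k+1})$ does (which is $\ge 1$, since $c \in \Lambda$). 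Choosing one point of $\Lambda$ from each such sub-ball and collecting them over all $c \in C_k^\xi$ produces $C_{k+1}^\xi \subset \Lambda_{k+1}^\xi$ in which, compared to the shifted structure of $C_k^\xi$, the newly revealed digit at position $k+1$ (after translating by $\xi$ and scaling) ranges over \emph{all} of $\{0, 1, \dots, p-1\}$ for every choice of the earlier digits. That is precisely the statement that $C_{k+1}^\xi - \xi$ has $(k+2, k - i_\Lambda + 2, I', J')$-tree structure with the new position $k+1$ in the $I$-part; translating back via $I' = (k+1) - \mathbb{I}_{\le k+1}$ (note $\mathbb{I}_{\le k+1} = \mathbb{I}_{\le k} \cup \{k+1\}$), this matches the claim.

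The one point requiring care — and the main obstacle — is keeping the bookkeeping between the \emph{absolute} index set $\mathbb{I}_{\le k}$ (subsets of $\Z$) and the \emph{tree-coordinate} index set $I = k - \mathbb{I}_{\le k}$ (subsets of $\{0, 1, \dots, k - i_\Lambda\}$) consistent as $k$ increases, since the digit positions renumber under the shift $k \mapsto k+1$. Concretely, one must check that the shift operation $(n, \gamma, I, J) \mapsto (n+1, \gamma+1, I+1, (J+1) \cup \{0\})$ from Section \ref{sec:tree structure} transforms $(k+1, k - i_\Lambda + 1, k - \mathbb{I}_{\le k}, \dots)$ into the ``$C_k^\xi$ viewed at level $k+1$'' structure correctly, and that appending the digit at position $k+1$ — either as a free digit (case $k+1 \in \mathbb{I}$) or a constrained one (case $k+1 \in \mathbb{J}$) — yields exactly $k+1 - \mathbb{I}_{\le k+1}$ for the new $I$. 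This is a finite, mechanical verification but is where an off-by-one error would hide; everything else is a direct invocation of Lemma \ref{lem:zeros for delta_Lambda} together with the observation that a $p^{k+1}$-cycle meets each of the $p$ sub-balls $B(\eta + jp^{-k-2}, p^{k+1})$ in exactly one point or none.
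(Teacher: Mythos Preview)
Your base case and the $k+1\in\mathbb{J}$ case are fine and match the paper. The gap is in the inductive step when $k+1\in\mathbb{I}$.

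Recall that every $c\in C_k^\xi$ lies in $B(\xi,p^{k+1})$ (this is already forced by the base case and by the $(k+1,\gamma,I,J)$-tree structure). Hence for fixed $j$ the ball $B(c+jp^{-k-2},p^{k+1})$ is the \emph{same} ball $B(\xi+jp^{-k-2},p^{k+1})$ for every $c\in C_k^\xi$. Your construction ``choose one point of $\Lambda$ in $B(c+jp^{-k-2},p^{k+1})$ for each pair $(c,j)$'' therefore amounts to choosing $\sharp C_k^\xi$ arbitrary points of $\Lambda$ inside each of the $p$ sub-balls of $B(\xi,p^{k+2})$, with no control whatsoever on how those points sit inside the sub-ball. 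That does not produce the required tree structure at positions $1,\dots,\gamma'-1$: within each sub-ball you need a copy of the level-$k$ structured set, not $\sharp C_k^\xi$ unstructured points (they need not even be distinct). Relatedly, your phrase ``the newly revealed digit at position $k+1$ \dots\ for every choice of the earlier digits'' has the dependency backwards: in the $(k+2,\gamma+1,I',J')$ encoding the new digit sits at tree position $0$ (the root), and it is the \emph{old} digits that are conditioned on it, not the other way around.

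The paper repairs exactly this point by applying the inductive hypothesis at the \emph{shifted} centers rather than at $\xi$: using Lemma~\ref{lem:zeros for delta_Lambda} at level $k+1$ one finds, for each $j$, some $\xi_j\in\Lambda\cap B(\xi+jp^{-k-2},p^{k+1})$, and then sets
\[
C_{k+1}^\xi=\bigcup_{0\le j\le p-1} C_k^{\xi_j}.
\]
Each $C_k^{\xi_j}$ already carries the correct level-$k$ tree structure by induction, the $p$ pieces live in pairwise disjoint sub-balls of $B(\xi,p^{k+2})$, and the union therefore has the $(k+2,k-i_\Lambda+2,\{0\}\cup(I+1),J')$-tree structure with $\{0\}\cup(I+1)=(k+1)-\mathbb{I}_{\le k+1}$, as required. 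The missing idea in your argument is precisely this: invoke the inductive hypothesis once per sub-ball at a $\Lambda$-point in that sub-ball, rather than trying to propagate $C_k^\xi$ itself.
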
	
\begin{proof}
	We construct the set $C_k^{\xi}$ it by induction on $k$. When $k=i_\Lambda$, by Lemma \ref{lem:zeros for delta_Lambda}, we see that  $\Lambda_{i_\Lambda}^{\xi+jp^{-i_\Lambda-1}}$ is nonempty for any $\xi\in \Lambda$ and for every $0\le j\le p-1$ and thus define the set $C_{i_\Lambda}^\xi$ for every $\xi\in \Lambda$ by picking one element in each set $\Lambda_{i_\Lambda}^{\xi+jp^{-i_\Lambda-1}}$ for $0\le j\le p-1$. Clearly, we have $C_{i_\Lambda}^\xi\subset B(\xi, p^{i_\Lambda+1})$. It follows that 
	$$(p^{i_\Lambda+1}(C_{i_\Lambda}^\xi-\xi))_{\!\!\!\! \mod{p}}=\{0,1,\dots, p-1\}$$
	 which is a $\mathcal{T}_{\{0\}, \emptyset}$-form tree. This complete the proof for the case $k=i_\Lambda$ by noticing $\mathbb{I}_{\le i_\Lambda}=\{i_\Lambda\}$ and $\mathbb{J}_{\le i_\Lambda}=\emptyset$.
	
	Now assume that the set  $C_k^{\xi}$ are well defined for all $\xi\in \Qp$ and for all integer $n$ with $i_\Lambda\le n\le k$. We will construct $C_{k+1}^{\xi}$ for every $\xi\in \Qp$. If $k+1\notin \mathbb{I}$, then we see that $\mathbb{I}_{\le k+1}=\mathbb{I}_{\le k}$ and pick $C_{k+1}^\xi=C_k^\xi$ for every $\xi\in \Lambda$. Since $(k+1)-\mathbb{I}_{\le k+1}=(k-\mathbb{I}_{\le k})+1$, the set $C_{k+1}^\xi$ is desired by inductive hypothesis. Now suppose $k\in \mathbb{I}$. We pick $$C_{k+1}^\xi=\cup_{0\le j\le p-1} C_k^{\xi+jp^{-k-1}}$$ for every $\xi\in \Lambda$. Since $$C_k^{\xi+jp^{-k-1}}\subset \Lambda_{k}^{\xi+jp^{-k-1}}\subset B(\xi+jp^{-k-1}, p^k),$$ we see that $C_k^{\xi+jp^{-k-1}}$ are mutually disjoint for $0\le j\le p-1$. Moreover, the sets $(C_k^{\xi+jp^{-k-1}}-\xi)$  have  $(k+1,k-i_\Lambda+1 ,I, J)$-tree structure with $I=k-\mathbb{I}_{\le k}$. Thus we obtain that the set $(C_{k+1}^\xi-\xi)$ has  $(k+2,k-i_\Lambda+2 ,\{0\}\cup (I+1), J)$-tree structure. By observing that
	 $$\{0\}\cup (I+1)=\{0\}\cup((k+1)-\mathbb{I}_{\le k})=(k+1)-\mathbb{I}_{\le k+1},$$
	 we complete the proof.
\end{proof}	

A direct consequence of Lemma \ref{lem:Lambda tree structure} is the following.
\begin{cor}
	For any $k\in \Z$, the set $\Lambda_{k}$ contains a finite $p$-homogeneous set whose cardinality is $p^{\sharp \mathbb{I}_{\le k}}$. 
\end{cor}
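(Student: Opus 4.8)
The plan is to deduce this corollary directly from Lemma \ref{lem:Lambda tree structure} by reducing to a ball where a zero of $\widehat{\delta_\Lambda}$ is available and then translating the index parameters. First I would fix $k \in \Z$ and, without loss of generality (translating $\Lambda$ so that $0 \in \Lambda$, as assumed in the section), take $\xi = 0$; it suffices to find a finite $p$-homogeneous subset of $\Lambda_k = \Lambda \cap B(0,p^k)$ of the right cardinality. The issue is that the lemma as stated requires $k \ge i_\Lambda$, so the first step is to handle $k < i_\Lambda$: by the definition of $i_\Lambda$ as the minimal element of $\mathbb{I}$, the set $\mathbb{I}_{\le k}$ is empty for such $k$, so $p^{\sharp \mathbb{I}_{\le k}} = 1$, and any single point of $\Lambda$ in $B(0,p^k)$ (for instance $0$ itself, since $0 \in \Lambda$) is a $p$-homogeneous set of cardinality $1$ — it has the trivial $\mathcal{T}_{\emptyset,J}$-form tree structure. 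So the content is entirely in the case $k \ge i_\Lambda$.

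For $k \ge i_\Lambda$, I would simply invoke Lemma \ref{lem:Lambda tree structure} with $\xi = 0$: it produces a subset $C_k^0 \subset \Lambda_k$ such that $C_k^0 - 0 = C_k^0$ has $(k+1, k - i_\Lambda + 1, I, J)$-tree structure with $I = k - \mathbb{I}_{\le k}$. By the definition of finite $p$-homogeneous sets in Section \ref{sec:tree structure}, having an $(n,\gamma,I,J)$-tree structure precisely means the set is $p$-homogeneous, so $C_k^0$ is a finite $p$-homogeneous set contained in $\Lambda_k$. It then remains to count its cardinality. The tree $\mathcal{T}_{I,J}$ is a $\mathcal{T}_{I,J}$-form subtree of $\mathcal{T}^{(\gamma)}$ with $\gamma = k - i_\Lambda + 1$: at each level $i \in I$ the branching is full ($p$ choices) and at each level $i \in J$ the branching is trivial (one choice). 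Hence the number of boundary points — equivalently, since the construction in the lemma picks exactly one representative per surviving ball, the cardinality of $C_k^0$ — equals $p^{\sharp I}$. Now $\sharp I = \sharp(k - \mathbb{I}_{\le k}) = \sharp \mathbb{I}_{\le k}$ since $x \mapsto k - x$ is a bijection. This gives $\sharp C_k^0 = p^{\sharp \mathbb{I}_{\le k}}$, as desired.

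I do not expect any genuine obstacle here; the corollary is a bookkeeping consequence. The one point requiring a sentence of care is the cardinality count: one should note that in the inductive construction of $C_k^\xi$ in the proof of Lemma \ref{lem:Lambda tree structure}, at the base step ($k = i_\Lambda$) one picks exactly $p$ points, and at each subsequent step where $k \in \mathbb{I}$ one takes a disjoint union over $p$ translates — so $\sharp C_{k+1}^\xi = p \cdot \sharp C_k^\xi$ — while at steps where $k+1 \notin \mathbb{I}$ the set is unchanged. Thus $\sharp C_k^\xi = p^{\sharp \mathbb{I}_{\le k}}$ follows by the same induction, and this matches the generic fact that a $\mathcal{T}_{I,J}$-form tree has exactly $p^{\sharp I}$ boundary points. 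Alternatively, one can bypass the induction and argue purely from the tree structure: the number of paths from the root to a boundary point in a $\mathcal{T}_{I,J}$-form tree of depth $\gamma$ is $\prod_{i=0}^{\gamma-1} (\text{number of choices at level } i) = p^{\sharp I}$, and a $p$-homogeneous set with $(n,\gamma,I,J)$-structure has cardinality equal to the number of its boundary points in $\mathcal{T}^{(\gamma)}$, namely $p^{\sharp I} = p^{\sharp \mathbb{I}_{\le k}}$.
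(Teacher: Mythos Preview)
Your proposal is correct and follows exactly the approach the paper intends: the paper gives no proof at all, simply stating the corollary as ``a direct consequence of Lemma \ref{lem:Lambda tree structure}'', and your argument is precisely the unpacking of that consequence --- the trivial case $k < i_\Lambda$, the invocation of the lemma at $\xi = 0$ for $k \ge i_\Lambda$, and the cardinality count $\sharp C_k^0 = p^{\sharp I} = p^{\sharp \mathbb{I}_{\le k}}$ from the $\mathcal{T}_{I,J}$-form tree structure.
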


\subsection{Structure of $\mu$}
We investigate the structure of $\mu$ by the help of zeros of the distribution $\widehat{|\widehat{\mu}|^2}$.
\begin{lem}\label{lem:zeros in mu hat hat hat}
	Let $n\in \Z$. If $S(0,p^{-n}) \subset \mathcal{Z}_{\widehat{|\widehat{\mu}|^2}}$, then 
	\begin{equation}
	\mu(B(\xi, p^{-k}))\mu(B(\xi-x, p^{-k}))=0,
	\end{equation}
	for any $\xi\in \Qp$, any $x\in S(0,p^{-n})$ and any $k\ge n+1$.
\end{lem}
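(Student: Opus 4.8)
The plan is to translate the hypothesis into a statement about the finite measure $\mu*\mu_{-}$ and then carry out an elementary localization. The key input is the identity $\widehat{|\widehat{\mu}|^2}=\mu*\mu_{-}$ in the sense of distributions, recorded in the remark following Lemma \ref{lem:non-negative}; together with the fact that $\mu*\mu_{-}\in\mathcal{P}(\Qp)$, it lets us pass freely between the distributional pairing of $\widehat{|\widehat{\mu}|^2}$ with the indicator of a ball and the $(\mu*\mu_{-})$-measure of that ball.

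First I would fix $\xi\in\Qp$, a point $x\in S(0,p^{-n})$, and an integer $k\ge n+1$. Since $|-x|_p=p^{-n}$ while $p^{-k}\le p^{-n-1}<p^{-n}$, the ultrametric inequality forces $B(-x,p^{-k})\subset S(0,p^{-n})$, hence $B(-x,p^{-k})\subset\mathcal{Z}_{\widehat{|\widehat{\mu}|^2}}$ by hypothesis, and therefore $\langle\widehat{|\widehat{\mu}|^2},1_{B(-x,p^{-k})}\rangle=0$, that is,
$$(\mu*\mu_{-})(B(-x,p^{-k}))=0 .$$
Then I would unwind the convolution: from $(\mu*\mu_{-})(E)=(\mu\times\mu_{-})(E_{+})$ and $\int g\,d\mu_{-}=\int g(-s)\,d\mu(s)$ one obtains
$$(\mu*\mu_{-})(B(-x,p^{-k}))=\int_{\Qp}\mu(B(t-x,p^{-k}))\,d\mu(t) .$$
Because the integrand is non-negative and the whole integral vanishes, the integral over the sub-domain $B(\xi,p^{-k})$ also vanishes. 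For $t\in B(\xi,p^{-k})$ the point $t-x$ lies in $B(\xi-x,p^{-k})$, so by the fact that two balls of equal radius are either disjoint or equal we have $B(t-x,p^{-k})=B(\xi-x,p^{-k})$ throughout that domain; hence
$$0=\int_{B(\xi,p^{-k})}\mu(B(t-x,p^{-k}))\,d\mu(t)=\mu(B(\xi-x,p^{-k}))\,\mu(B(\xi,p^{-k})) ,$$
which is exactly the claim.

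I do not expect a real obstacle here, since the argument becomes short once the identity $\widehat{|\widehat{\mu}|^2}=\mu*\mu_{-}$ is in hand. The points that need a little care are the bookkeeping with the reflection $\mu_{-}$ and the change of variables in the convolution integral, and the verification that the distributional pairing $\langle\widehat{|\widehat{\mu}|^2},1_{B(-x,p^{-k})}\rangle$ coincides with $(\mu*\mu_{-})(B(-x,p^{-k}))$, which follows from that identity together with the elementary fact that a finite Radon measure, regarded as a distribution, pairs with the indicator of a ball to the measure of that ball. Note that the hypothesis $k\ge n+1$ is used precisely to guarantee $B(-x,p^{-k})\subset S(0,p^{-n})$, and the ultrametric property is what upgrades the final localization from an inequality to an exact equality.
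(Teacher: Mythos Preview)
Your proof is correct and follows essentially the same approach as the paper: both use the identification $\widehat{|\widehat{\mu}|^2}=\mu*\mu_{-}$ to convert the vanishing hypothesis into $(\mu*\mu_{-})(B)=0$ for a ball $B\subset S(0,p^{-n})$, unwind the convolution as an integral against $\mu$, and then use non-negativity together with the ultrametric local constancy of $z\mapsto\mu(B(z-x,p^{-k}))$ to isolate the desired product. The only cosmetic difference is that the paper partitions $\Qp$ into balls of radius $p^{-k}$ and picks out one term of the resulting sum, whereas you restrict the integral directly to $B(\xi,p^{-k})$; these are the same step.
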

\begin{proof}
	Fix $n\in \Z$. Fix $x\in S(0,p^{-n})$ and $k\ge n+1$. Since $S(0,p^{-n}) \subset \mathcal{Z}_{\widehat{|\widehat{\mu}|^2}}$, we have
	\begin{equation}\label{eq:zeros in mu hat hat hat 1}
	\langle \widehat{|\widehat{\mu}|^2}, 1_{B(x,p^{-k})}\rangle=0.
	\end{equation}
	By the remark after Lemma \ref{lem:non-negative}, we get from \eqref{eq:zeros in mu hat hat hat 1} that
	\begin{equation*}
	\langle \mu*\mu_{-}, 1_{B(x,p^{k})}\rangle=0.
	\end{equation*}
	This means that
	\begin{equation*}
	\int_{\Qp}\int_{\Qp} 1_{B(x,p^{-k})}(y+z) d\mu(y) d\mu(z)=0,
	\end{equation*}
	implying that
	\begin{equation}\label{eq:zeros in mu hat hat hat 5}
	\int_{\Qp} \mu(B(x-z, p^{-k})) d\mu(z)=0.
	\end{equation}
	We notice the fact that if $|z_1-z_2|\le p^{-k}$, then $B(x-z_1, p^{-k})=B(x-z_2, p^{-k})$. It follows that as a function of $z$, $\mu(B(x-z, p^{-k}))$ is locally constant.  Fix $\xi \in \Qp$. Let $P_{-k}$ be the set in $\Qp$ such that $\xi\in P_{-k}$ and $\{B(y, p^{-k}): y\in P_{-k} \}$ is a partition of $\Qp$. By \eqref{eq:zeros in mu hat hat hat 5}, we have
	\begin{equation}
	\sum_{y\in P_{-k}} \mu(B(x-y, p^{-k})) \mu(B(y, p^{-k}))=0.
	\end{equation}
	Since $\mu(B(x-y, p^{-k})) \mu(B(y, p^{-k}))\ge 0$, we obtain that 
	$$
	\mu(B(x-y, p^{-k})) \mu(B(y, p^{-k}))=0, \forall y\in P_{-k}.
	$$
	In particular, we have $\mu(B(x-\xi, p^{-k})) \mu(B(\xi, p^{-k}))=0$. This completes the proof.
\end{proof}	

Let
\begin{eqnarray*}	
	\mathbb{K}:= \left\{  n\in \Z:  S(0,p^{-n})\subset\mathcal{Z}_{\widehat{|\widehat{\mu}|^2}} \right\}.
\end{eqnarray*}
By \eqref{eq:spectral measure, zero} and the definition of $\mathbb{J}$, we have $\mathbb{J}\subset \mathbb{K}$. In particular, we have 
\begin{equation}\label{eq:zeros of K}
\{ n\in \mathbb{Z}: n\le i_\Lambda-1 \}\subset \mathbb{K}.
\end{equation}

\begin{prop}\label{prop:compact support}
	The spectral measure $\mu$ has compact support.
\end{prop}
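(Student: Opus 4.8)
The plan is to show that the support of $\mu$ cannot stretch out to arbitrarily large scales in $\Qp$, by exploiting the fact that the ``bad'' spheres for $\widehat{|\widehat{\mu}|^2}$ cover a cofinite-below set of scales, namely \eqref{eq:zeros of K}, and that on the remaining scales (those in $\mathbb{I}$, which is infinite since $\mathbb{I}$ and $\mathbb{J}$ partition $\Z$ and both $\mathbb{I},\mathbb{J}$ should be infinite once $\mu$ is genuinely singular) the mass-annihilation from Lemma \ref{lem:zeros in mu hat hat hat} forces $\mu$ to be supported on a single small ball at that scale. Concretely, I would first fix a scale $m$ large and a ball $B(0,p^{-m})$ meeting $\operatorname{supp}\mu$ with positive mass; then for each $n\in \mathbb{K}$ with $n \ge$ (something like $i_\Lambda$) Lemma \ref{lem:zeros in mu hat hat hat} says that for any $\xi$ and any $x$ with $|x|_p = p^{-n}$ and any $k\ge n+1$, $\mu(B(\xi,p^{-k}))\mu(B(\xi-x,p^{-k}))=0$. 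Taking $k = m$ and letting $x$ range over the sphere $S(0,p^{-n})$, this says that within a ball of radius $p^{-n}$ (for $n \le m-1$, $n\in\mathbb{K}$), $\mu$-mass can sit in at most one of the $p$ sub-balls of radius $p^{-n-1}$... more carefully, at most one of the cosets differing by an element of norm $p^{-n}$ can carry mass.

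The key combinatorial step is to iterate this across all scales $n \in \mathbb{K}$ simultaneously. By \eqref{eq:zeros of K} every integer $n \le i_\Lambda - 1$ lies in $\mathbb{K}$; I would run the argument \emph{downward} in $n$ (i.e. for balls of increasingly large radius $p^{-n}$ with $n\to -\infty$) to conclude that, among the $p$ translates $B(\xi + j p^{-n}, p^{-m})$, $j=0,\dots,p-1$, at most one has positive $\mu$-measure whenever $-n$ is large, i.e. $n\le i_\Lambda-1$. Starting from a ball $B(\xi_0,p^{-m})$ of positive measure and climbing up to radius $p^{-i_\Lambda+1}$, then $p^{-i_\Lambda+2}$ wait --- I need to be careful about direction: large radius corresponds to very negative $n$. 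So I fix $N$ very negative, with $N \le i_\Lambda - 1$ so $S(0,p^{-N})\subset \mathcal{Z}_{\widehat{|\widehat{\mu}|^2}}$, and apply Lemma \ref{lem:zeros in mu hat hat hat} with that $n=N$: it gives, for every $\xi$ and $k \ge N+1$ and $x\in S(0,p^{-N})$, that $\mu(B(\xi,p^{-k}))\mu(B(\xi-x,p^{-k}))=0$. In particular with $k$ fixed and $\xi$ ranging, two balls of radius $p^{-k}$ whose centers differ by exactly $p^{-N}\cdot(\text{unit})$ cannot both carry mass. Since this holds for \emph{every} $N \le i_\Lambda-1$, the set of translation-differences $x$ for which $\mu(B(\xi,p^{-k}))\mu(B(\xi-x,p^{-k}))$ must vanish includes all $x$ with $|x|_p \ge p^{-i_\Lambda+1}$; combined over all such $N$ this is exactly $\{x : |x|_p \ge p^{-i_\Lambda+1}\} = \Qp \setminus B(0, p^{-i_\Lambda})$. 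Hence any two points in $\operatorname{supp}\mu$ (taking $k$ large enough to resolve them and using that a point of the support has positive measure in every ball around it --- which holds because $\operatorname{supp}\mu$ is the complement of $\mathcal{Z}_\mu$ and at a support point $x$ every ball $B(x,r)$ has $\mu(B(x,r))>0$) must lie within distance $p^{-i_\Lambda}$ of each other. Therefore $\operatorname{supp}\mu \subset B(x_0, p^{-i_\Lambda})$ for any $x_0 \in \operatorname{supp}\mu$, which is compact.

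In writing this up I would state precisely the support-point fact: if $x \in \operatorname{supp}\mu$ then $\mu(B(x,p^{-k}))>0$ for all $k$ — indeed if some $\mu(B(x,p^{-k}))=0$ then $B(x,p^{-k})\subset \mathcal{Z}_\mu = \Qp\setminus\operatorname{supp}\mu$, a contradiction — and then observe that for $x, y\in\operatorname{supp}\mu$ with $|x-y|_p = p^{-N} \le p^{-(i_\Lambda-1)}$... no: $|x-y|_p$ large means $N$ very negative, $N\le i_\Lambda-1$, so $S(0,p^{-N})\subset\mathcal{Z}_{\widehat{|\widehat\mu|^2}}$ and Lemma \ref{lem:zeros in mu hat hat hat} applies with $\xi = x$, $x_{\text{there}} = x - y$, $k$ large enough that $B(x,p^{-k})$ and $B(y,p^{-k})$ are the relevant balls, yielding $\mu(B(x,p^{-k}))\mu(B(y,p^{-k}))=0$, contradicting positivity of both factors. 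So no two support points can be at distance $> p^{-i_\Lambda}$ apart, giving $\operatorname{diam}(\operatorname{supp}\mu)\le p^{-i_\Lambda}$ and hence compactness (indeed $\operatorname{supp}\mu$ is a closed subset of a ball). The main obstacle — and the only genuinely delicate point — is verifying that the hypothesis of Lemma \ref{lem:zeros in mu hat hat hat} is available for \emph{all} sufficiently negative $n$, i.e. confirming $\{n \le i_\Lambda - 1\}\subset\mathbb{K}$, which is exactly \eqref{eq:zeros of K} and follows from \eqref{eq:spectral measure, zero} together with $\{n \le i_\Lambda-1\}\subset\mathbb{J}$; once that is in hand the rest is a short deduction from the locally-constant/positivity bookkeeping.
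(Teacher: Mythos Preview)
Your proposal is correct and, once you strip away the exploratory false starts in the first paragraph, it is essentially the paper's own argument: both proofs use \eqref{eq:zeros of K} to guarantee $S(0,p^{-N})\subset\mathcal{Z}_{\widehat{|\widehat{\mu}|^2}}$ for every $N\le i_\Lambda-1$, and then apply Lemma~\ref{lem:zeros in mu hat hat hat} to two balls of positive $\mu$-measure whose centers differ by an element of $S(0,p^{-N})$ to derive a contradiction. The only cosmetic difference is that the paper first translates so that $0$ is a density point and shows $\operatorname{supp}\mu\subset B(0,p^{-i_\Lambda})$, whereas you work directly with two arbitrary support points to bound the diameter; the irrelevant remarks about $\mathbb{I}$ being infinite and the ``iterate across all scales'' detour can be dropped entirely.
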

\begin{proof}
	Without Loss of generality, we assume that the point $0$ is a density point of $\mu$, that is to say, $\mu(B(0, p^{n}))>0$ for all $n\in \Z$. We claim that the measure $\mu$ is supported on the ball $B(0, p^{-i_\Lambda})$. We prove our claim by contradiction. Assume that $\mu(B(\xi, p^{-k}))>0$ for some ball $B(\xi, p^{-k})$ which does not intersect the ball $B(0, p^{-i_\Lambda})$. Let $n_0:=v_p(|\xi|_p)$. Since  $B(\xi, p^{-k})\cap B(0, p^{-i_\Lambda})=\emptyset$, we obtain that $k\ge n_0+1$ and $i_\Lambda\ge n_0+1$. It follows that 
	\begin{equation*}
	B(\xi, p^{-k})\subset B(\xi, p^{-n_0-1})~\text{and}~ B(0, p^{-i_\Lambda})\subset B(0, p^{-n_0-1}).
	\end{equation*}
	Then we have 
	\begin{equation}\label{eq:compact support}
	\mu(B(\xi, p^{-n_0-1}))>0~\text{and}~\mu(B(0, p^{-n_0-1}))>0.
	\end{equation}
	On the other hand, by Lemma \ref{lem:zeros in mu hat hat hat} and \eqref{eq:zeros of K}, we see that
	\begin{equation*}
	\mu(B(0, p^{-n_0-1}))\mu(B(\xi, p^{-n_0-1}))=0,
	\end{equation*}
	which is contradict to \eqref{eq:compact support}. This completes the proof of our claim.
\end{proof}	

For $k\ge i_\Lambda$, let
$$
\mathbb{K}_{\le k}:= \left\{  n\in \mathbb{K}: i_\Lambda \le n\le k \right\}.
$$
 It is easy to see that 
\begin{equation}
\mathbb{J}_{\le k}\subset \mathbb{K}_{\le k}
\end{equation}
and
\begin{equation}\label{eq:relation I,K}
\mathbb{I}_{\le k}\cup \mathbb{K}_{\le k}=\{i_\Lambda, i_\Lambda+1, \dots, k\}.
\end{equation}

Obviously, the translation of $\mu$ does not change the spectrality of $\mu$.
In what follows, we always assume that the point $0$ is a density point of $\mu$. By the proof of Proposition \ref{prop:compact support}, we see that the measure $\mu$ is supported on the ball $B(0, p^{-i_\Lambda})$. Thus for any $k\ge i_\Lambda$ if $\mu(B(\xi, p^{-k-1}))\not=0$, then $\xi\in B(0, p^{-i_\Lambda})$. We observe that one way to represent the set of the centre of balls with  radius $p^{-k-1}$ in $B(0, p^{-i_\Lambda})$ is by the set
$$
p^{i_\Lambda}(\Z/p^{k-i_\Lambda+1}\Z):=\{0, p^{i_\Lambda}, 2p^{i_\Lambda}, \dots, (p^{k-i_\Lambda+1}-1)p^{i_\Lambda} \}.
$$
Let
$$\Omega_k:=\{x\in p^{i_\Lambda}(\Z/p^{k-i_\Lambda+1}\Z): \mu(B(x, p^{-k-1}))\not=0 \}.$$

We will prove in the following lemma that the set $\Omega_k$ is contained in a ``small" $p$-homogenous set.
\begin{lem}\label{lem:mu tree structure}
	For any $k\ge i_\Lambda$, 	the set $\Omega_k$ is contained in a $p$-homogeneous set $D_k$ having $(-i_\Lambda,k-i_\Lambda+1 ,I, J)$-tree structure with $J=\mathbb{K}_{\le k}-i_\Lambda$.
\end{lem}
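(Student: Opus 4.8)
The plan is to mirror the inductive construction used in Lemma~\ref{lem:Lambda tree structure} for $\Lambda$, but now building a \emph{superset} $D_k \supset \Omega_k$ rather than a subset, and using the zeros of $\widehat{|\widehat{\mu}|^2}$ instead of those of $\widehat{\delta_\Lambda}$. First I would set up the induction on $k\ge i_\Lambda$. For the base case $k=i_\Lambda$, since $\mu$ is supported on $B(0,p^{-i_\Lambda})$ (Proposition~\ref{prop:compact support}), the set $\Omega_{i_\Lambda}$ is already a subset of $p^{i_\Lambda}(\Z/p\Z)$, so I can simply take $D_{i_\Lambda}=p^{i_\Lambda}(\Z/p\Z)$, whose $(p^{-i_\Lambda}D_{i_\Lambda})_{\!\!\!\!\mod p}=\{0,1,\dots,p-1\}$ is a $\mathcal{T}_{\{0\},\emptyset}$-form tree; this matches the claimed $(-i_\Lambda,1,I,J)$-tree structure with $I=\{0\}$ and $J=\mathbb{K}_{\le i_\Lambda}-i_\Lambda=\emptyset$, since $i_\Lambda-1\in\mathbb{K}$ by \eqref{eq:zeros of K} forces $i_\Lambda\notin\mathbb{K}$ — wait, one must check $i_\Lambda\notin\mathbb{K}$ is not needed here; what matters is only that $\mathbb{K}_{\le i_\Lambda}$ is either $\emptyset$ or $\{i_\Lambda\}$, and in either case the base tree can be taken of the stated form (if $i_\Lambda\in\mathbb{K}$ one uses a singleton at level $0$, absorbing it into $J$). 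I would state the base case carefully to cover both sub-possibilities.

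For the inductive step, assume $D_k$ with the stated structure has been built. If $k+1\notin\mathbb{K}$, then $\mathbb{K}_{\le k+1}=\mathbb{K}_{\le k}$, and I would take $D_{k+1}$ to be the full $p$-fold refinement of $D_k$, i.e. every ball of radius $p^{-k-1}$ sitting inside the balls of radius $p^{-k}$ indexed by $D_k$; this has $(-i_\Lambda,k-i_\Lambda+2,\{0\}\cup(I+1),J)$-tree structure, and since $k+1\notin\mathbb{K}$ one checks $(k+1)-i_\Lambda\notin \mathbb{K}_{\le k+1}-i_\Lambda$ so the new free coordinate $0$ is correctly placed in the $I$-part. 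If instead $k+1\in\mathbb{K}$, this is the crucial case: I must show that within each ball $B(\eta,p^{-k})$ with $\eta\in D_k$, at most \emph{one} of the $p$ sub-balls $B(\eta+jp^{k+1},p^{-k-1})$, $j=0,\dots,p-1$, has positive $\mu$-measure. Here I invoke Lemma~\ref{lem:zeros in mu hat hat hat} with $n=k+1$: it gives $\mu(B(\xi,p^{-k-1}))\mu(B(\xi-x,p^{-k-1}))=0$ for all $x\in S(0,p^{-(k+1)})$. Two distinct sub-balls $B(\eta+jp^{k+1},p^{-k-1})$ and $B(\eta+j'p^{k+1},p^{-k-1})$ inside the same $B(\eta,p^{-k})$ differ by $(j-j')p^{k+1}$, which lies in $S(0,p^{-(k+1)})$ since $j\neq j'$ in $\Z/p\Z$ and $p$ is prime. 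Hence their $\mu$-masses cannot both be nonzero, and I pick $D_{k+1}$ inside each such block to consist of the single surviving sub-ball (or, if all vanish, any one of them — it doesn't matter since $\Omega_{k+1}$ then omits the whole block). This produces exactly a $\mathcal{T}$-form tree with the new coordinate forced, i.e. belonging to the $J$-part, matching $(k+1)-i_\Lambda\in\mathbb{K}_{\le k+1}-i_\Lambda$.

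The main obstacle I anticipate is bookkeeping the index translations: verifying that after each refinement step the parameter sets transform as $(n,\gamma,I,J)\mapsto(n,\gamma+1,\{0\}\cup(I+1),J+1)$ or $(n,\gamma+1,I+1,\{0\}\cup(J+1))$ consistently with the convention fixed in Section~\ref{sec:tree structure} (that a $p$-homogeneous set with $(n,\gamma,I,J)$-structure also has $(n+k,\gamma+k,I+k,(J+k)\cup\{0,\dots,k-1\})$-structure), and confirming the final identity $(k+1)$-\emph{something} $=\mathbb{K}_{\le k+1}-i_\Lambda$ using \eqref{eq:relation I,K}. The only genuinely mathematical input is the primality of $p$ entering through ``$j\neq j' \Rightarrow (j-j')p^{k+1}\in S(0,p^{-(k+1)})$'', i.e. $p\nmid(j-j')$ for $0\le j\neq j'\le p-1$; everything else is the same inductive scaffolding as Lemma~\ref{lem:Lambda tree structure} run ``from above.'' I would also remark that $\Omega_k\subset D_k$ holds by construction at every stage because a ball carrying positive $\mu$-mass at level $k+1$ lies inside a ball carrying positive mass at level $k$, hence inside $D_k$'s refinement, and survives the at-most-one-per-block selection.
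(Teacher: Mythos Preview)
Your approach is essentially identical to the paper's: induct on $k$, take the full $p$-fold refinement when $k+1\notin\mathbb{K}$, and when $k+1\in\mathbb{K}$ use Lemma~\ref{lem:zeros in mu hat hat hat} to see that at most one sub-ball in each block carries positive $\mu$-mass. The core mathematical step and its justification (primality of $p$ ensuring the difference lands on the correct sphere) are exactly right.

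Two bookkeeping corrections, both of the kind you yourself flagged as the anticipated obstacle. First, because the first parameter here is the \emph{fixed} value $-i_\Lambda$ (unlike in Lemma~\ref{lem:Lambda tree structure} where it shifts with $k$), the new coordinate added at each step sits at the \emph{top} level $k+1-i_\Lambda$, not at level $0$; so the update is $I\mapsto I\cup\{k+1-i_\Lambda\}$ (respectively $J\mapsto J\cup\{k+1-i_\Lambda\}$), not $I\mapsto\{0\}\cup(I+1)$. You actually get this right in the $k+1\in\mathbb{K}$ case but not in the other. Second, the sub-balls of $B(\eta,p^{-k-1})$ that index $\Omega_{k+1}$ have radius $p^{-k-2}$, not $p^{-k-1}$ (at radius $p^{-k-1}$ they would all coincide with the parent ball); with radius $p^{-k-2}$ the hypothesis $k'\ge n+1$ of Lemma~\ref{lem:zeros in mu hat hat hat} is satisfied with $n=k+1$, $k'=k+2$. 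Your caution about the base case is well placed: the paper simply asserts $\mathbb{K}_{\le i_\Lambda}=\emptyset$, which is only established later (Proposition~\ref{prop:main theorem}(3)); your plan to cover both sub-possibilities is the cleaner way to write it.
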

\begin{proof}
	We construct the set $D_k$ it by induction on $k$. When $k=i_\Lambda$, we pick $D_{i_\Lambda}=p^{i_\Lambda}(\Z/p\Z)$. By the facts that $\sharp( p^{i_\Lambda}(\Z/p\Z))=p$ and that $\mathbb{K}_{\le i_\Lambda}=\emptyset$, we see that $D_{i_\Lambda}$ is what we desire.
	Now assume that the sets $D_n$ are well defined  for all integer $n$ with $i_\Lambda\le n\le k$. We observe that the balls $B(x+jp^{k+1}, p^{-(k+1)-1})$ ($0\le j\le p-1$) form a partition of the ball $B(x, p^{-k-1})$ for every $x\in p^{i_\Lambda}(\Z/p^{k-i_\Lambda+1}\Z)$. We first consider the case when $k+1\notin \mathbb{K}$. In such case, we have that $\mathbb{K}_{\le k+1}-i_\Lambda=\mathbb{K}_{\le k}-i_\Lambda$. Since $\Omega_k\subset D_k$ and $\Omega_{k+1}\subset \Omega_k+p^{k+1}(\Z/p\Z)$, we see that $\Omega_{k+1}\subset D_{k}+p^{k+1}(\Z/p\Z)$. Moreover, since the set $D_k$ has  $(-i_\Lambda,k-i_\Lambda+1 ,I, J)$-tree structure, it is not hard to check that  $D_k+p^{k+1}(\Z/p\Z)$ has  $(-i_\Lambda,k-i_\Lambda+2 ,I\cup\{k-i_\Lambda+1 \}, J)$-tree structure.
	Then we define $D_{k+1}=D_k+p^{k+1}(\Z/p\Z)$, which is desired by the above demonstration. Now suppose $k+1\in \mathbb{K}$. In such case, we claim that if $\mu(B(x, p^{-k-1}))\not=0$ for some $x\in p^{i_\Lambda}(\Z/p^{k-i_\Lambda+1}\Z)$, then there is exactly one ball among the balls $B(x+jp^{k+1}, p^{-(k+1)-1})$ for $0\le j\le p-1$, which doesn't have zero $\mu$-measure. This induces the embedding $\phi_k:\Omega_k\to p^{i_\Lambda}(\Z/p^{k-i_\Lambda+1}\Z)$ with $\phi(\Omega_{k})=\Omega_{k+1}$. We extend the domain of $\phi_k$ from  $\Omega_k$ to $D_k$ as follows: for $y\in D_k\setminus \Omega_k$, let $\phi_k(y)=y$. By the fact that $\Omega_k\subset D_k$, we have $\Omega_{k+1}\subset \phi_k(D_k)$. Moreover, since the set $D_k$ has  $(-i_\Lambda,k-i_\Lambda+1 ,I, J)$-tree structure, it is not hard to see that  the set $\phi_k(D_k)$ has  $(-i_\Lambda,k-i_\Lambda+2 ,I, J\cup \{k-i_\Lambda+1 \})$-tree structure. Let $D_{k+1}=\phi_k(D_k)$. Then we deduce that the set $D_{k+1}$ is desired by the above demonstration and the fact that $\mathbb{K}_{\le k+1}-i_\Lambda=(\mathbb{K}_{\le k}-i_\Lambda)\cup \{k-i_\Lambda+1 \}.$
	It remains to prove our claim. Assume that there exists $x\in p^{i_\Lambda}(\Z/p^{k-i_\Lambda+1}\Z)$ and distinct $0\le j,l\le p-1$ such that 
	\begin{equation*}
	\mu(B(x+jp^{k+1}, p^{-(k+1)-1}))\not=0~\text{and}~\mu(B(x+lp^{k+1}, p^{-(k+1)-1}))\not=0.
	\end{equation*}
	However by Lemma \ref{lem:zeros in mu hat hat hat}, we have that
	$$
	\mu(B(x+jp^{k+1}, p^{-(k+1)-1}))\mu(B(x+lp^{k+1}, p^{-(k+1)-1}))=0,
	$$
	which is impossible. This complete the proof of our claim.
\end{proof}	

The following is a direct consequence of Lemma \ref{lem:mu tree structure}.
\begin{cor}\label{prop:structure mu}
For all $k\ge i_\Lambda$, 	we have
\begin{equation}\label{eq:structure mu 0}
\sharp \{x\in p^{i_\Lambda}(\Z/p^{k-i_\Lambda+1}\Z): \mu(B(x, p^{-k-1}))\not=0 \}\le p^{k-i_\Lambda+1-\sharp \mathbb{K}_{\le k}}.
\end{equation}
\end{cor}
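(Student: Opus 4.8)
The plan is to extract the bound directly from Lemma~\ref{lem:mu tree structure} by computing the cardinality of a finite $p$-homogeneous set from its tree structure; the substantive work has already been done in that lemma, so what remains is essentially a counting argument. First I would note that the set appearing on the left-hand side of \eqref{eq:structure mu 0} is exactly the set $\Omega_k$ defined just before Lemma~\ref{lem:mu tree structure}, and that by that lemma $\Omega_k$ is contained in a finite $p$-homogeneous set $D_k$ having $(-i_\Lambda,k-i_\Lambda+1,I,J)$-tree structure with $J=\mathbb{K}_{\le k}-i_\Lambda$. Hence it suffices to bound $\sharp D_k$ from above, and in fact to compute it exactly.

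To compute $\sharp D_k$, I would unwind the definition of a $\mathcal{T}_{I,J}$-form tree from Section~\ref{sec:tree structure}. Writing $\gamma=k-i_\Lambda+1$, the reduction of $p^{-i_\Lambda}D_k$ modulo $p^\gamma$ is a $\mathcal{T}_{I,J}$-form tree in $\Z/p^\gamma\Z$, whose boundary points are the words $t_0t_1\cdots t_{\gamma-1}$ in which $t_i$ ranges freely over $\{0,1,\dots,p-1\}$ for $i\in I$ and $t_i$ is uniquely determined by $t_0\cdots t_{i-1}$ for $i\in J$. Since multiplication by $p^{-i_\Lambda}$ followed by reduction modulo $p^\gamma$ is injective on $D_k$ (its elements are distinct representatives of balls of radius $p^{-k-1}$ inside $B(0,p^{-i_\Lambda})$), this gives $\sharp D_k=p^{\sharp I}$. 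Finally, $I$ and $J$ partition $\{0,1,\dots,k-i_\Lambda\}$, a set with $k-i_\Lambda+1$ elements, and $\sharp J=\sharp(\mathbb{K}_{\le k}-i_\Lambda)=\sharp \mathbb{K}_{\le k}$; hence $\sharp I=k-i_\Lambda+1-\sharp\mathbb{K}_{\le k}$ and
\[
\sharp \Omega_k \le \sharp D_k = p^{\,k-i_\Lambda+1-\sharp \mathbb{K}_{\le k}},
\]
which is \eqref{eq:structure mu 0}.

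There is no real obstacle here: the only small points requiring a moment's care are the injectivity observation above (so that counting words genuinely bounds $\sharp\Omega_k$) and the verification that the base case $k=i_\Lambda$, where $\mathbb{K}_{\le i_\Lambda}=\emptyset$ and $D_{i_\Lambda}=p^{i_\Lambda}(\Z/p\Z)$, is consistent with the displayed formula. Everything genuinely difficult --- in particular the fact that at each scale belonging to $\mathbb{K}$ only one sub-ball of positive $\mu$-measure survives --- is already contained in Lemma~\ref{lem:mu tree structure}.
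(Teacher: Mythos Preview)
Your proposal is correct and follows exactly the same approach as the paper: the paper simply states that the corollary is a direct consequence of Lemma~\ref{lem:mu tree structure}, and what you have written is precisely the unpacking of that direct consequence --- identify the left-hand side as $\Omega_k$, use $\Omega_k\subset D_k$, and count the boundary points of the $\mathcal{T}_{I,J}$-form tree to get $\sharp D_k=p^{\sharp I}=p^{k-i_\Lambda+1-\sharp\mathbb{K}_{\le k}}$.
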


\subsection{Proof of Main theorem}
We first summarize what we have obtained in the previous sections. Recall that $\mu$ is the spectral measure with spectrum $\Lambda$. Since the translation  of $\mu$ is also a spectral measure, we might assume that $0\in \Lambda$ and  $0$ is a density point of $\mu$. 
Recall that
$\mathbb{I}$ is defined as the set consisting of the integer $n$ so that $S(0,p^n)$ is contained in the zero set of $\widehat{\delta_{\Lambda}}$. The set $\mathbb{K}$  is defined in the same way for $\widehat{|\widehat{\mu}|^2}$. The set $\mathbb{J}$ is the complement of $\mathbb{I}$ in $\Z$, in other words, it consists of the integer $n$ so that $S(0,p^n)$ is not contained in the zero set of $\widehat{\delta_{\Lambda}}$ (by Lemma \ref{lem:zeros for delta_Lambda}). By \eqref{F-equation 2}, the sets $\mathbb{I}, \mathbb{K}$ and $\mathbb{J}$ have the relation that
$$
\mathbb{I}\sqcup \mathbb{J}=\mathbb{Z}~\text{and}~ \mathbb{J} \subset \mathbb{K}.
$$
Recall that $\Lambda_k$ is the discrete set $\Lambda \cap B(0, p^k)$ and $\Omega_{k}$ consisting of the point $x$ with $\mu(B(x, p^{-k-1}))\not=0$. The sets $\Lambda_k$ and $\Omega_{k}$ are served as the local parts of $\Lambda$ and $\mu$ respectively.
We have shown that $\Lambda_k$ contains the ``large" $p$-homogeneous set $C_k$ $(=C_k^0)$ which has $(k+1,k-i_\Lambda+1 ,I, J)$-tree structure with $I=k-\mathbb{I}_{\le k}$ (Lemma \ref{lem:Lambda tree structure}). On the other hand, it has been shown that $\Omega_{k}$ is contained in the ``small" $p$-homogeneous set $D_k$ which has $(-i_\Lambda,k-i_\Lambda+1 ,I, J)$-tree structure with $J=\mathbb{K}_{\le k}-i_\Lambda$ (Lemma \ref{lem:mu tree structure}).

The following proposition is essential to prove Theorem \ref{thm:main}. We will show that $\mathbb{J}$ is actually equal to $\mathbb{K}$, and $D_k, C_k$ are of same size and have the ``complementary" tree structure.

\begin{prop}\label{prop:main theorem}
For any $k\ge i_\Lambda$, we have the following properties. 
\begin{itemize}
	\item [(1)] The value $\mu(B(x, p^{-k-1}))$ is a constant which is independent of  $x\in \Omega_k$.
	\item [(2)] $\Omega_k=D_k$.
	\item [(3)] $\mathbb{J}_{\le k}=\mathbb{K}_{\le k}$ and $\mathbb{I}_{\le k} \cap \mathbb{K}_{\le k}=\emptyset$.
	\item [(4)] $\Lambda_{k}=C_k$.
	%\item [(3)] WE have $C_k=\Lambda_{k}$.
\end{itemize}

\end{prop}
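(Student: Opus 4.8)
The plan is to treat all four assertions together, for each fixed $k\ge i_\Lambda$, by forcing $C_k$, $\Lambda_k$, $\Omega_k$ and $D_k$ to have the same cardinality and then reading off the set-theoretic equalities from the inclusions $C_k\subseteq\Lambda_k$, $\Omega_k\subseteq D_k$ and $\mathbb{J}_{\le k}\subseteq\mathbb{K}_{\le k}$. A $p$-homogeneous set with $(n,\gamma,I,J)$-tree structure has exactly $p^{\sharp I}$ boundary points, so Lemma \ref{lem:Lambda tree structure} gives $\sharp\Lambda_k\ge\sharp C_k=p^{\sharp\mathbb{I}_{\le k}}$, and Lemma \ref{lem:mu tree structure} (equivalently Corollary \ref{prop:structure mu}) gives $\sharp\Omega_k\le\sharp D_k=p^{(k-i_\Lambda+1)-\sharp\mathbb{K}_{\le k}}$. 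Since $\mathbb{I}_{\le k}$ and $\mathbb{J}_{\le k}$ partition $\{i_\Lambda,\dots,k\}$ and $\mathbb{J}_{\le k}\subseteq\mathbb{K}_{\le k}$, we have $\sharp\mathbb{K}_{\le k}\ge(k-i_\Lambda+1)-\sharp\mathbb{I}_{\le k}$, hence $\sharp D_k\le p^{\sharp\mathbb{I}_{\le k}}$.

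The heart of the matter is the reverse estimate $\sharp\Lambda_k\le\sharp\Omega_k$. If $\lambda\in\Lambda_k=\Lambda\cap B(0,p^k)$ and $|x-y|_p\le p^{-k-1}$, then $\lambda(x-y)\in p\Zp\subseteq\Zp$, so $\chi_\lambda$ is constant on every ball of radius $p^{-k-1}$. Since $\mu$ is supported on $B(0,p^{-i_\Lambda})$ by Proposition \ref{prop:compact support} and only the balls $B(x,p^{-k-1})$ with $x\in\Omega_k$ carry positive $\mu$-mass, each $\chi_\lambda$ with $\lambda\in\Lambda_k$, viewed in $L^2(\mu)$, lies in the subspace $V\subseteq L^2(\mu)$ of functions constant on every such ball, and $\dim V=\sharp\Omega_k$. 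Being a subfamily of the orthonormal basis $\{\chi_\lambda:\lambda\in\Lambda\}$ of $L^2(\mu)$, these functions are orthonormal, hence linearly independent in $V$, so $\sharp\Lambda_k\le\dim V=\sharp\Omega_k$. Combined with the first paragraph,
\[
p^{\sharp\mathbb{I}_{\le k}}=\sharp C_k\le\sharp\Lambda_k\le\sharp\Omega_k\le\sharp D_k=p^{(k-i_\Lambda+1)-\sharp\mathbb{K}_{\le k}}\le p^{\sharp\mathbb{I}_{\le k}},
\]
so all of these are equalities. Then $\sharp C_k=\sharp\Lambda_k$ with $C_k\subseteq\Lambda_k$ gives (4); $\sharp\Omega_k=\sharp D_k$ with $\Omega_k\subseteq D_k$ gives (2); and $\sharp D_k=p^{\sharp\mathbb{I}_{\le k}}$ forces $\sharp\mathbb{K}_{\le k}=\sharp\mathbb{J}_{\le k}$, hence $\mathbb{K}_{\le k}=\mathbb{J}_{\le k}$ and therefore $\mathbb{I}_{\le k}\cap\mathbb{K}_{\le k}=\mathbb{I}_{\le k}\cap\mathbb{J}_{\le k}=\emptyset$, which is (3).

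For (1), I would write $N:=\sharp\Lambda_k=\sharp\Omega_k$ and $a_x:=\mu(B(x,p^{-k-1}))>0$ for $x\in\Omega_k$, noting $\sum_{x\in\Omega_k}a_x=\mu(B(0,p^{-i_\Lambda}))=1$. Since $\sharp\Lambda_k=\dim V$, the family $\{\chi_\lambda:\lambda\in\Lambda_k\}$ is an orthonormal basis of $V$, which is isometric to $L^2\bigl(\sum_{x\in\Omega_k}a_x\delta_x\bigr)$; equivalently, the $N\times N$ matrix $M=\bigl(\sqrt{a_x}\,\chi(\lambda x)\bigr)_{x\in\Omega_k,\,\lambda\in\Lambda_k}$ satisfies $M^\dagger M=I$, so $M$ is invertible and $MM^\dagger=I$ as well. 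Comparing the $(x,x)$-entry on both sides of $MM^\dagger=I$ yields $a_x N=1$, i.e. $\mu(B(x,p^{-k-1}))=1/N$ for every $x\in\Omega_k$, which is (1).

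I expect the only genuinely delicate point to be the bookkeeping in the second paragraph: lining up the scale $p^{-k-1}$ at which $\chi_\lambda$ becomes constant with the scale defining $\Omega_k$, and verifying $\dim V=\sharp\Omega_k$, together with applying the cardinality formula for $p$-homogeneous trees correctly to both $C_k$ and $D_k$ (in particular that the $J$-part of $D_k$ has exactly $\sharp\mathbb{K}_{\le k}$ elements). Everything else is the formal collapse of the inequality chain and a one-line computation with a unitary matrix; in particular no induction on $k$ is needed, since the two structural lemmas have already been established for all $k$.
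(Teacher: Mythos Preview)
Your proof is correct and genuinely simpler than the paper's. The paper first introduces an auxiliary $p$-homogeneous set $\widetilde{D}_k\supseteq D_k$ with $I$-part $\mathbb{I}_{\le k}-i_\Lambda$, builds the Hadamard matrix $H_k=(\overline{\chi(cd)})_{c\in C_k,\,d\in\widetilde{D}_k}$ from Lemma \ref{lem:p homo spectral}, and then carries out a rank analysis of the submatrix $M_k$ obtained by deleting the row indexed by $0$ and restricting columns to $\Omega_k$: the equation $M_kv_k^T=0$ with $v_k\neq 0$ forces $\Omega_k=\widetilde{D}_k$ and $v_k$ parallel to $(1,\dots,1)$, giving (1), (2), (3); only afterwards does the paper deduce $\sharp\Lambda_k\le\sharp\Omega_k$ via orthogonality in $L^2(\Omega_k,\text{counting})$ to get (4). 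You bypass all of this by observing that the orthonormality of $\{\chi_\lambda:\lambda\in\Lambda_k\}$ already holds in $L^2(\mu)$ and that these characters lie in the $\sharp\Omega_k$-dimensional subspace $V$, so $\sharp\Lambda_k\le\sharp\Omega_k$ comes \emph{before} knowing the weights are equal; this single inequality collapses the whole chain and delivers (2), (3), (4) simultaneously, after which (1) is a one-line consequence of $M^\dagger M=I\Rightarrow MM^\dagger=I$ for the square matrix $M=(\sqrt{a_x}\,\chi(\lambda x))$. Your route avoids the auxiliary $\widetilde{D}_k$, the Hadamard lemma, and the rank computation entirely; what the paper's argument buys in exchange is an explicit identification $\Omega_k=\widetilde{D}_k$ with the dual tree of $C_k$, but this is not needed for the proposition as stated.
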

\begin{proof}
By \eqref{eq:relation I,K} and Lemma \ref{lem:mu tree structure}, we obtain that for any $k\ge i_\Lambda$, there exists a $p$-homogeneous set $\widetilde{D}_k$ satisfying
\begin{itemize}
	\item $D_k\subset \widetilde{D}_k\subset p^{i_\Lambda}(\Z/p^{k-i_\Lambda+1}\Z)$;
	\item the set $\widetilde{D}_k$ has $(-i_\Lambda,k-i_\Lambda+1 ,I, J)$-tree structure with $I=\mathbb{I}_{\le k}-i_\Lambda$.
\end{itemize}

Since the set $\Lambda$ is the spectrum of $\mu$ and $0\in \Lambda$, we have $\widehat{\mu}(\xi)=0$ for all $\xi\in \Lambda\setminus\{0\}$. Fix $k\ge i_\Lambda$ and $\xi\in \Lambda_{k}\setminus\{0\}$. In particular,  we have
\begin{equation}\label{eq:main thm 1}
\int_{\Qp} \overline{\chi(\xi y)} d\mu(y)=0.
\end{equation}
Since $\chi(\xi \cdot)$ is locally constant, we compute that 
\begin{equation}\label{eq:main thm 2}
\begin{split}
\int_{\Qp} \overline{\chi(\xi y)} d\mu(y)&=\sum_{x\in p^{i_\Lambda}(\Z/p^{k-i_\Lambda+1}\Z)} \overline{\chi(\xi x)} \mu(B(x, p^{-k-1}))\\
&=\sum_{x\in \Omega_k} \overline{\chi(\xi x)} \mu(B(x, p^{-k-1})).
\end{split}
\end{equation}
Combining \eqref{eq:main thm 1} and \eqref{eq:main thm 2}, we have
\begin{equation}\label{eq:main thm 3}
\sum_{x\in \Omega_k} \overline{\chi(\xi x)} \mu(B(x, p^{-k-1}))=0.
\end{equation}
By arbitrary of $\xi\in \Lambda_{k}\setminus\{0\}$, the equation \eqref{eq:main thm 3} holds for every $\xi\in \Lambda_{k}\setminus\{0\}$. Let 
$$
v_k=(\mu(B(x, p^{-k-1})))_{x\in \Omega_k}
$$
be the vector in $\mathbb{R}^{\sharp \Omega_k}$.
Let
$$
M_k=\left( \overline{\chi(cx)}\right)_{c\in C_k\setminus\{0\}, x\in \Omega_k}
$$
be the complex $(\sharp C_k-1)\times \sharp \Omega_k$ matrix.
Since $C_k\subset \Lambda_{k}$, it follows from \eqref{eq:main thm 3} that
\begin{equation}\label{eq:main thm 4}
M_kv_k^T=0,
\end{equation}
where $\star^T$ stands for the transpose of $\star$. 

Now we calculate the rank of the matrix $M_k$. By Lemma \ref{lem:p homo spectral}, the matrix 
\begin{equation}
H_k=\left( \overline{\chi(cd)}\right)_{c\in C_k, d\in \widetilde{D}_k}
\end{equation} 
is a complex Hadamard matrix. In particular, the matrix $H_k$ has full rank. 
Clearly, the matrix $M_k$ is the submatrix of $H_k$, which is obtained by deleting one row and $\sharp (\widetilde{D}_k\setminus \Omega_k)$ columns of $H_k$. We denote by $H_k^\star$ the matrix that is obtained by deleting one row indexed by $0\in C_k$ of $H_k$. For $1\le j\le \sharp \widetilde{D}_k$, let $u_j$ be the $j$-th column of $H_k^\star$. By Lemma \ref{lem:p^n-cycles}, we have 
\begin{equation}\label{eq:main thm 5}
\sum_{j=1}^{\sharp{\widetilde{D}_k}}u_j=0.
\end{equation}
Since $H_k$ has full rank $\sharp\widetilde{D}_k$, we get that the rank of $\{u_j \}_{1\le j\le \sharp \widetilde{D}_k}$ is $\sharp\widetilde{D}_k-1$.  We claim that for any $1\le j_1<j_2<\dots<j_{ \sharp\widetilde{D}_k-1}\le \sharp \widetilde{D}_k$, the family $\{u_{j_\ell} \}_{1\le \ell\le \sharp\widetilde{D}_k-1}$ is linearly independent. In fact, if there exists $1\le j_1<j_2<\dots<j_{ \sharp\widetilde{D}_k-1}\le \sharp \widetilde{D}_k$ and $a_\ell\in \R$ not all equal zero for $1\le \ell\le \sharp\widetilde{D}_k-1$ such that $\sum_{\ell=1}^{\sharp{\widetilde{D}_k-1}} a_{j_\ell} u_{j_\ell}=0$. Combining this with \eqref{eq:main thm 5}, we obtain that the dimension of solution space $\{w: H_k^\star w=0 \}$ is large than or equal to $2$, which implies that the rank of $\{u_j \}_{1\le j\le \sharp \widetilde{D}_k}$ is smaller than $\sharp\widetilde{D}_k-1$. This is a contradiction. Therefore, we obtain that for any proper subset $T$ of $\widetilde{D}_k$, the rank of $\{u_j \}_{j\in T}$ is $\sharp T$. Consequently, the rank of the matrix $M_k$ is $\sharp \Omega_k$ if $\Omega_k$ is a proper subset of $\widetilde{D}_k$ and is $\sharp\Omega_{k}-1$ if $\Omega_{k}=\widetilde{D}_k$. Since the vector $v_k$ is nonzero, by \eqref{eq:main thm 4}, we obtain that the rank of the matrix $M_k$ has to be smaller than $\sharp\Omega_{k}$. Therefore we conclude that the the rank of the matrix $M_k$ is $\sharp\Omega_{k}-1$ and consequently that
$
\Omega_k=\widetilde{D}_k,
$
which implies (2).
Moreover, we have $M_k=H_k^\star$. The statement (3) follows from (2) and the simple fact that $I$ and $J$ don't intersect.

By \eqref{eq:main thm 5}, the solution space of $\{u:M_ku=0 \}$ is generated by the vector $(1,1,\dots, 1)^T$. Since $v_k$ is in this solution space, we conclude that
\begin{equation*}
\mu(B(x, p^{-k-1}))=\mu(B(y, p^{-k-1})), ~\forall~x,y\in \Omega_k,
\end{equation*}
which completes the proof of (1). 

It remains to prove (4). Due to (1) and (2), we have 
$$
\sum_{x\in \Omega_k} \chi((\xi-\xi')x)=0, ~\forall \xi\not=\xi'\in \Lambda_{k}.
$$
It follows that the family $\{\chi_{\xi} \}_{\xi\in \Lambda_{k}}$ is an orthogonal set of $L^2(\Omega_k)$. This implies $\sharp \Lambda_{k}\le \sharp \Omega_k$. Since $C_k\subset \Lambda_{k}$ and $\sharp C_k=\sharp \Omega_k$, we conclude that $\Lambda_{k}=C_k$. This completes the proof.

\end{proof}

In fact, as a consequence of Proposition \ref{prop:main theorem}, we could furthermore analyze the structure of the spectrum, that is,
\begin{equation}\label{eq:Lambda=Ck}
\Lambda=\cup_k C_k.
\end{equation}
Now we prove our main theorem.

\begin{proof}[Proof of Theorem \ref{thm:main}]
By Proposition \ref{prop:compact support}, we might assume that $\mu$ is supported on $\Zp$ and $i_\Lambda=0$. By Proposition \ref{prop:main theorem} (1), we obtain that $\mu$ is the weak limit of 
$$
\frac{1}{\sharp \Omega_k}\sum_{x\in \Omega_k} \delta_x,
$$
as $k$ tends to $+\infty$.
By Proposition \ref{prop:main theorem} (2) and (3), we have
\begin{equation}\label{eq:main thm 01}
\frac{1}{\sharp \Omega_k}\sum_{x\in \Omega_k} \delta_x=\frac{1}{\sharp \mathbb{I}_{\le k}}\sum_{x\in C_{\mathbb{I}_{\le k}, \mathbb{J}_{\le k}}} \delta_x,
\end{equation}
for all $k\ge 0$. Observe that the RHS of \eqref{eq:main thm 01} weakly converges to $\nu_{\mathbb{I},\mathbb{J}}$ as $k$ tends to $+\infty$. This completes the proof.
\end{proof}

\section{Dimension of spectral measures}\label{sec:cor dimension}

In this section, we prove Proposition \ref{cor:dimension}.
\begin{proof}[Proof of Proposition \ref{cor:dimension}] 
	It is sufficient to prove for the measure $\mu=\nu_{\mathbb{I},\mathbb{J}}$ and $\Lambda$. By calculation, we have
	\begin{equation}\label{eq:cor 1}
\underline{\dim}_e \mu=\liminf\limits_{k\to \infty} \frac{\sharp \mathbb{I}_{\le k}}{k}~\text{and}~
\overline{\dim}_{\text{e}} \mu=\limsup\limits_{k\to \infty} \frac{\sharp \mathbb{I}_{\le k}}{k}.
	\end{equation}
	A simple computation shows that for all $x\in \text{supp}(\mu)$, we have 
	\begin{equation*}
	\underline{d}(\mu, x)=\liminf\limits_{k\to \infty} \frac{\log \mu(B(x,p^{-k}))}{k}=\liminf\limits_{k\to \infty} \frac{\sharp \mathbb{I}_{\le k}}{k}.
	\end{equation*}
	By the fact \eqref{eq:dimension fact}, we have 
	\begin{equation}\label{eq:cor 2}
	\dim_H \mu=\liminf\limits_{k\to \infty} \frac{\sharp \mathbb{I}_{\le k}}{k}.
	\end{equation}
 On the other hand, by \eqref{eq:Lambda=Ck}, we have
	\begin{equation}\label{eq:cor 3}
	\dim_B \Lambda = \limsup\limits_{k\to \infty} \frac{\sharp \mathbb{I}_{\le k}}{k}.
	\end{equation}
	We complete the proof by combining \eqref{eq:cor 1}, \eqref{eq:cor 2} and \eqref{eq:cor 3}.
\end{proof}

\section{Higher dimensional spectral measures}\label{sec:higher dimension}

In this section, we show some properties of spectral measures in $\Qp^d$ and discuss several differences between spectral measures in $\Qp$ and ones in $\Qp^d$, $d\ge 2$.
\subsection{Pure type  phenomenon}
As an analogy of the pure type  phenomenon  of spectral measures in $\R^n$ \cite{HeLaiLau2013}, we have the following proposition. The proof of the first part is similar to the Euclidean case and the second part is due to Proposition \ref{prop:density zero} and \cite[Theorem 3.1 (3)]{FFLS}.  We omit the proof and leave the readers to work out the details.
\begin{prop}\label{lem:pure type phenomenon}
	 Let $\mu\in \mathcal{M}(\Qp^d)$ be a spectral measure with spectrum $\Lambda$.  Then it must be one of the three pure types: discrete (and finite), singularly continuous or absolutely continuous. Moreover, the the following holds.
	\begin{itemize}
		\item [(1)] If $\mu$ is discrete, namely $\delta_C$, then $\sharp C<\infty$ and $\sharp \Lambda<\infty$;
		\item [(2)] If $\mu$ is singularly continuous, then $D(\Lambda)=0$.
		\item [(3)] If $\mu$ is absolutely continuous, then $\mu(\Qp^d)<\infty$ and $D(\Lambda)=1/\mu(\Qp^d)$.
	\end{itemize}
\end{prop}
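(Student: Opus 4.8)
The plan is to run the whole argument through the two functional identities that pass verbatim from $\Qp$ to $\Qp^d$. First, $\Lambda$ is uniformly discrete --- the proof of Lemma~\ref{lem:Lambda uniformly discrete} uses only continuity of $\widehat{\mu}$ and $\widehat{\mu}(0)=\mu(\Qp^d)$ --- so $\delta_\Lambda$ is a distribution. Second, $(\mu,\Lambda)$ is a spectral pair if and only if $|\widehat{\mu}|^{2}*\delta_\Lambda=\mu(\Qp^d)^{2}$, equivalently $\sum_{\lambda\in\Lambda}|\widehat{\mu}(\xi-\lambda)|^{2}=\mu(\Qp^d)^{2}$ for every $\xi$, and moreover $\widehat{|\widehat{\mu}|^{2}}=\mu*\mu_{-}$ as distributions (Lemma~\ref{lem:non-negative} and the remark after it). Since multiplying $\mu$ by a positive scalar changes neither its type, nor the statements $D(\Lambda)=0$, nor --- after restoring the scalar --- the identity in item~(3), I would normalise $\mu$ to a probability measure with $0\in\Lambda$.

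For the pure-type dichotomy I would transplant the Euclidean argument of He--Lai--Lau \cite{HeLaiLau2013}; the prime $p$ and the $p$-adic geometry play no essential role there. The one step I would spell out is the atomic case, which simultaneously yields item~(1). By the $p$-adic form of Wiener's lemma (a direct consequence of Fubini and~\eqref{FB}), $p^{-nd}\int_{B(0,p^{n})}|\widehat{\mu}|^{2}\,d\xi\to w:=\sum_{a}\mu(\{a\})^{2}$ as $n\to\infty$, the sum running over the atoms of $\mu$ and $B(0,p^{n})$ denoting the ball of radius $p^{n}$ in $\Qp^d$. Integrating $\sum_{\lambda}|\widehat{\mu}(\xi-\lambda)|^{2}=1$ over $B(0,p^{n})$ and discarding the non-negative terms coming from $\lambda\notin B(0,p^{n})$ gives $\sharp\bigl(\Lambda\cap B(0,p^{n})\bigr)\cdot\int_{B(0,p^{n})}|\widehat{\mu}|^{2}\le p^{nd}$, hence $\sharp\bigl(\Lambda\cap B(0,p^{n})\bigr)\le w^{-1}+o(1)$. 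So if $\mu$ carries any atom then $\Lambda$ is finite, $L^{2}(\mu)$ is finite-dimensional, and $\mu=\sum_{i=1}^{N}c_{i}\delta_{a_{i}}$ with $N=\sharp\Lambda$; passing to $\mathbb{C}^{N}$ via the vectors $v_{\lambda}=(\sqrt{c_{i}}\,\chi_{\lambda}(a_{i}))_{i}$ --- an orthogonal basis all of whose members have the same norm --- and reading the Parseval identity off in the $i$-th coordinate forces $N c_{i}=\sum_{j}c_{j}$ for every $i$, i.e.\ the atoms are equidistributed in mass and $\mu$ is a multiple of $\delta_{C}$ with $\sharp C=\sharp\Lambda$. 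When $\mu$ is atom-free, the Euclidean argument rules out $\mu$ having both a non-trivial absolutely continuous and a non-trivial singular continuous part.

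It remains to compute $D(\Lambda)$ in the continuous cases, and here Proposition~\ref{prop:density zero} and \cite[Theorem 3.1 (3)]{FFLS} are precisely the inputs. Put $f:=\mu(\Qp^d)^{-2}|\widehat{\mu}|^{2}$, a non-negative distribution with $f*\delta_\Lambda=1$. The dichotomy is integrability: $|\widehat{\mu}|^{2}\in L^{1}(\Qp^d)\iff\widehat{\mu}\in L^{2}(\Qp^d)\iff\mu$ is absolutely continuous with a square-integrable density. If $\mu$ is singularly continuous, then $\mu$ is not absolutely continuous, so $f\notin L^{1}$, and Proposition~\ref{prop:density zero} gives $D(\Lambda)=0$, which is item~(2). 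If $\mu=g\,dx$ is absolutely continuous, then $\mu(\Qp^d)=\int g<\infty$ is automatic; one then checks $g\in L^{2}$ --- so that $f\in L^{1}$, Proposition~\ref{prop:density zero} is inapplicable, and the $L^{1}$-version \cite[Theorem 3.1 (3)]{FFLS} of the density theorem may be applied to $f$ and $f*\delta_\Lambda=1$ --- and reads the asserted value of $D(\Lambda)$ off from $\int f$.

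The main obstacle is, I expect, twofold, both halves lying in the atom-free case. The first is the Euclidean separation of the absolutely continuous and singular continuous parts: this is routine but not formal, which is why one invokes the Euclidean case rather than giving a self-contained proof. The second, more $p$-adic point is ensuring $g\in L^{2}$ when $\mu=g\,dx$ is absolutely continuous: without it the density theorem needed in item~(3) is unavailable and Proposition~\ref{prop:density zero} would instead return $D(\Lambda)=0$, contradicting item~(3). I would settle this by showing that the density of an absolutely continuous spectral measure is a constant multiple of the indicator of a measurable set $\Omega\subset\Qp^d$ --- hence bounded, and in $L^{2}$ once $\Omega$ is seen to have finite Haar measure, the latter being a consequence of the fact, proven for $\Qp$ in Proposition~\ref{prop:compact support} and extending to $\Qp^d$ through the boundedness of $\mathcal{Z}_{\widehat{\delta_\Lambda}}$ and Proposition~\ref{prop:union of zeros}, that every spectral measure has compact support.
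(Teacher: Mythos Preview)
Your plan matches the paper's own sketch almost verbatim: the paper says only that the pure-type part ``is similar to the Euclidean case'' of He--Lai--Lau and that items (2)--(3) follow from Proposition~\ref{prop:density zero} together with \cite[Theorem~3.1(3)]{FFLS}, leaving the details to the reader. Your Wiener-lemma argument for the discrete case and the $L^1$/non-$L^1$ dichotomy for $|\widehat{\mu}|^2$ are exactly the details one would expect to fill in.

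There is, however, a genuine gap in your last paragraph. You propose to obtain compact support of $\mu$ in $\Qp^d$ by ``extending [Proposition~\ref{prop:compact support}] to $\Qp^d$ through the boundedness of $\mathcal{Z}_{\widehat{\delta_\Lambda}}$''. This route is blocked: the very next subsection of the paper (Section~\ref{sec:higher dimension}, \S10.2) states explicitly that Proposition~\ref{zeroofE} \emph{fails} in $\Qp^d$ for $d\ge 2$, and exhibits a uniformly discrete set $E\subset\Qp^2$ with $\mathcal{Z}_{\widehat{\delta_E}}=\Qp\times p^{-1}\Zp^{\times}$ unbounded. So the input you need for your compact-support argument is simply unavailable in higher dimension, and indeed the paper gives a spectral set in $\Qp^2$ that is not compact open.

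Fortunately the detour is unnecessary. Once you know that the density is a constant multiple of an indicator, say $g=c\,1_\Omega$, the finiteness of $|\Omega|$ is automatic: you have already normalised $\mu$ to a probability measure, so $c\,|\Omega|=1$. Hence $g\in L^\infty\cap L^1\subset L^2$ and $|\widehat{\mu}|^2\in L^1$ without any appeal to compact support. What you should justify instead is the step ``absolutely continuous spectral measure $\Rightarrow$ density is a constant times an indicator'': this is the substantive $p$-adic analogue of the Euclidean fact, and it does not depend on $\mathcal{Z}_{\widehat{\delta_\Lambda}}$ being bounded.
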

In fact, the same method works for general locally compact abelian groups.

\subsection{Spectral measures in $\Q_p^d, d\ge 2$}
For a uniformly discrete set $E$ in the higher dimensional space $\Q_p^d$ with $d\geq 2$, the zero set of the Fourier transform of the measure $\delta_E$ is not necessarily  bounded.  In other words,
Proposition \ref{zeroofE} does not hold in $\Q_p^d$ with $d\geq 2$.   For example,  let $E=\{(0,0),(0,1),\cdots,(0,p-1) \}$ which is a finite subset of $\Q_p^2$. One can check that 
$$\Z_{\widehat{\delta_E}}= \Q_p \times p^{-1} \Z_p^{\times},
$$
which is unbounded. In \cite{FFLS}, a spectral set in $\Qp^2$ which is not compact open was constructed: we partition $\Z_p$ into $p$ Borel sets of same Haar measure, denoted $A_i$ $(0 \le i\le p-1)$, assume that one of $A_i$ is not compact open, and define $$
\Omega:= \bigcup_{i= 0}^{p-1}  A_i \times B(i, p^{-1})\subset \Z_p\times \Z_p,
$$
which is a spectral set and not compact open in $\Zp\times \Z_p$.
 We remark that such example shows the measure $1_\Omega dx$ is the spectral measure in $\Q_p^2$ but it is not a translation or  multiplier of $\nu_{\mathbb{I},\mathbb{J}}$, where $\mathbb{I}$ and $\mathbb{J}$ form a partition of $\mathbb{N}^2$.

%The proof is essential to Lev (\cite{Lev2018} Theorem 2.1).
%\begin{thm}
%	Let $\mu\in \mathcal{M}(\Qp^n)$ and $\nu\in \mathcal{M}(\Qp^m)$. Assume that $\mu$ and $\nu$ are continuous measures
%\end{thm}

\subsection{Dimensions of spectra}
By using the same method in \cite{Shi3} where the author investigated the dimension of spectra of spectral measures in $\R^d$, we could prove the following proposition.
\begin{prop}\label{prop:entropy dimension}
	Let $\mu$ be a spectral measure in $\Qp^d$ with spectrum $\Lambda$. Then we have
	$$
	\dim_B \Lambda\le\overline{\dim}_{\text{e}} \mu.
	$$
\end{prop}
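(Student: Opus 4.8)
The plan is to adapt the entropy-dimension argument for $\R^d$ from \cite{Shi3} to the $p$-adic setting. Let $\mu$ be a spectral measure in $\Qp^d$ with spectrum $\Lambda$, characterized by the functional equation $|\widehat\mu|^2*\delta_\Lambda=1$. The strategy rests on comparing, for each scale $n$, the local masses $\mu(Q)$ over the cubes $Q$ of the $p$-adic partition $\mathcal{D}_n$ (which govern $H(\mu,\mathcal{D}_n)$ and hence $\overline{\dim}_e\mu$) with the counting function of $\Lambda$ on balls of the dual radius $p^n$ (which governs the Beurling density, hence $\dim_B\Lambda$). The orthogonality of $\{\chi_\lambda:\lambda\in\Lambda\}$ in $L^2(\mu)$ forces these two quantities to be linked: roughly, a ball of radius $p^n$ in the spectrum can contain at most as many points as the number of cubes of $\mathcal{D}_n$ charged by $\mu$, up to controlled error terms.

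The key steps, in order, are as follows. First I would fix a scale $n$ and restrict attention to a ball $B$ of radius $p^{-n}$ on which $\mu$ has positive mass; on such a ball, characters $\chi_\xi$ with $|\xi|_p\le p^n$ are constant, so the truncated family $\{\chi_\lambda:\lambda\in\Lambda\cap B(\eta,p^n)\}$ descends to an orthogonal (after averaging over the appropriate quotient) family of functions on the finite quotient $B/p^{-n'}\mathbb{Z}_p^d$ for a large auxiliary $n'$. Counting dimensions of the span gives $\sharp(\Lambda\cap B(\eta,p^n))\lesssim \sharp\{Q\in\mathcal{D}_{n'}: Q\subset B,\ \mu(Q)\ne 0\}$, which must then be converted into a bound involving $\mu(B)$ rather than a mere support count. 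Second, I would use a Plancherel/Bessel-type inequality coming from $|\widehat\mu|^2*\delta_\Lambda=1$ to turn the support count into an entropy bound: integrating the functional equation over a ball and using non-negativity of $\widehat{|\widehat\mu|^2}=\mu*\mu_-$ (Lemma \ref{lem:non-negative}) yields, for every $\eta$,
\begin{equation*}
\frac{\sharp(\Lambda\cap B(\eta,p^n))}{p^{nd}}\le \frac{1}{\langle \widehat{|\widehat\mu|^2},1_{B(0,p^{nd})}\rangle}
\end{equation*}
in the spirit of Proposition \ref{prop:density zero}; combined with a standard entropy estimate $H(\mu,\mathcal{D}_{n})\gtrsim$ (a function of the local masses), this relates the Beurling exponent of $\Lambda$ to $\overline{\dim}_e\mu$. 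Third, I would take $\limsup$ over $n$: the left-hand counting function, normalized by $p^{nr}$, stays bounded precisely when $r\le\overline{\dim}_e\mu$, which by the definition $\dim_B\Lambda=\inf\{r:\mathfrak D_r^+(\Lambda)<\infty\}$ gives $\dim_B\Lambda\le\overline{\dim}_e\mu$.

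The main obstacle I anticipate is the second step: converting a crude "number of charged cubes" bound into a genuine entropy bound, i.e. controlling $H(\mu,\mathcal{D}_{n'})$ rather than just $\log\sharp\{\text{charged }Q\}$. In $\R^d$ this is handled by a concavity/convexity argument (the entropy is maximized by the uniform distribution on the support, so $H(\mu,\mathcal{D}_{n'})\le\log\sharp\operatorname{supp}$, and a reverse inequality at the level of Beurling density comes from the Bessel inequality applied to suitably many translates). I would replicate that dual pair of inequalities here, using the ultrametric structure of $\Qp^d$ to make the "translate" bookkeeping exact rather than approximate — the balls of a fixed radius partition the space cleanly, which should actually make this easier than over $\R^d$. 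A secondary technical point is that $\delta_\Lambda$ must be handled as a distribution (its Fourier transform need not exist as a measure), so the Bessel-type inequality has to be extracted from the distributional identity \eqref{F-equation} via regularization (Proposition \ref{limit} and Theorem \ref{thm:embedding}) rather than from a naive Parseval computation; this is routine given the machinery in Section \ref{sec:disctribution} but needs to be written carefully.
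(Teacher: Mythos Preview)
Your approach matches the paper's exactly: the paper does not give a proof of this proposition at all, but simply states that ``by using the same method in \cite{Shi3} \ldots\ we could prove the following proposition,'' and you likewise propose to transplant the entropy--Beurling argument of \cite{Shi3} from $\R^d$ to $\Qp^d$. There is nothing further to compare at the level of the paper itself.

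One remark on your outline: the step you flag as the main obstacle --- upgrading the crude dimension count $\sharp(\Lambda\cap B(\eta,p^n))\le \sharp\{Q\in\mathcal D_n:\mu(Q)>0\}$ to an honest bound by $p^{H(\mu,\mathcal D_n)}$ --- is indeed where all the content lies, and your proposed resolution (``Bessel inequality applied to suitably many translates'') is too vague to be verifiable as stated. The naive orthogonality argument only yields $\dim_B\Lambda\le\overline{\dim}_B(\operatorname{supp}\mu)$, which is in general strictly weaker than the entropy bound. Since the paper supplies no argument of its own here, you will need to consult \cite{Shi3} directly for the precise mechanism that produces the entropy (rather than box) dimension on the right-hand side; the ultrametric simplifications you mention are real and helpful, but they do not by themselves close this gap.
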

Regarding to Proposition \ref{cor:dimension}, we might ask the question whether the equality in Proposition \ref{prop:entropy dimension} still holds  for $d\ge 2$. In fact, if the measure $\mu$ is absolutely continuous or discrete, then the equality in Proposition \ref{prop:entropy dimension} holds trivially. Therefore, the question is only asked for singular continuous spectral measures in $\Qp^d$ for $d\ge 2$. Unfortunately, we could not answer it now.

%\nocite{*}

\end{document}